\numberwithin{equation}{section}
\renewcommand\bar\overline
\newtheorem{thm}{Theorem}[section]
\newtheorem{lemma}[thm]{Lemma}
\newtheorem{conj}[thm]{Conjecture}
\theoremstyle{definition}
\theoremstyle{remark}
\newtheorem{rmk}[thm]{Remark}
\begin{document}
\title{Some experiments with integral Apollonian circle packings}
\author{Elena Fuchs}
\address{Princeton University, Department of Mathematics, Fine Hall, Washington Rd, Princeton, NJ 08544-100}
\email{efuchs@math.princeton.edu}
\author{Katherine Sanden}
\address{Princeton University, Department of Mathematics, Fine Hall, Washington Rd, Princeton, NJ 08544-100}
\email{ksanden@math.princeton.edu}
\thanks{K. Sanden was supported by NSF Grant 0758299}
\keywords{Number theory, computational number theory, diophantine equations}

\begin{abstract}
Bounded Apollonian circle packings (ACP's) are constructed by repeatedly inscribing circles into the triangular interstices of a configuration of four mutually tangent circles, one of which is internally tangent to the other three.  If the original four circles have integer curvature, all of the circles in the packing will have integer curvature as well.  In \cite{ll}, Sarnak proves that there are infinitely many circles of prime curvature and infinitely many pairs of tangent circles of prime curvature in a primitive\footnote[1]{A primitive integral ACP is one in which no integer $>1$ divides the curvatures of all of the circles in the packing.} integral ACP.  In this paper, we give a heuristic backed up by numerical data for the number of circles of prime curvature less than $x$, and the number of  ``kissing primes," or {\it pairs} of circles of prime curvature less than $x$ in a primitive integral ACP.  We also provide experimental evidence towards a local to global principle for the curvatures in a primitive integral ACPs.
\end{abstract}

\maketitle
%
%

\section{Introduction}\label{intro}

Start with four mutually tangent circles, one of them internally tangent to the other three as in Fig.~\ref{circles}.  One can inscribe into each of the curvilinear triangles in this picture a unique circle (the uniqueness follows from an old theorem of Apollonius of Perga circa 200 BC).  If one continues inscribing the circles in this way the resulting picture is called an Apollonian circle packing (ACP).  A key aspect of studying such packings is to consider the radii of the circles which come up in a given ACP.  However, since these radii become small very quickly, it is more convenient to study the {\it curvatures} of the circles, or the reciprocals of the radii.   Studied in this way, ACP's possess a beautiful number theoretic property that all of the circles in an ACP have integer curvature if the initial four have integer curvature.  The number theory associated with these integral ACP's  has been investigated extensively in \cite{Apollo}, \cite{Fuchs}, and \cite{oh}.
\begin{figure}[H]
\centering
\includegraphics[height = 30 mm]{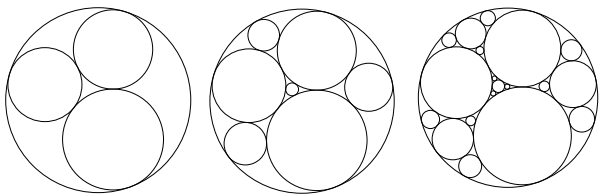}
\caption{Packing Circles}\label{circles}
\end{figure}

A central theorem to any of the results in these papers is Descartes' theorem, which says that the curvatures $(v_1,v_2,v_3,v_4)$ of any four mutually tangent circles satisfy what is called the Descartes equation,
\begin{equation}\label{descartes}
F(v_1,v_2,v_3,v_4)= 2(v_1^2+v_2^2+v_3^2+v_4^2)-(v_1+v_2+v_3+v_4)^2 =0.
\end{equation}
where a circle which is internally tangent to the other three is defined to have negative curvature (see \cite{Coxeter} for a proof).  Given this formula we may assign to every set of $4$ mutually tangent circles in an integral packing $P$ a vector $\mathbf v\in \mathbb Z^4$ of the circles' curvatures.  We use Descartes' equation to express any integral ACP as an orbit of a subgroup of the orthogonal group $\textrm{O}_F(\mathbb Z)$ acting on $\mathbf v$.  This subgroup, called the Apollonian group, is specified in \cite{Apollo}, and we denote it by $A$.  It is a group on the four generators
 \begin{equation}\label{gens}
 \small{
S_1=\left(
\begin{array}{llll}
-1&2&2&2\\
0&1&0&0\\
0&0&1&0\\
0&0&0&1\\
\end{array}
\right)\quad
S_2=\left(
\begin{array}{llll}
1&0&0&0\\
2&-1&2&2\\
0&0&1&0\\
0&0&0&1\\
\end{array}
\right)}
\end{equation}
\begin{equation*}
 \small{
S_3=\left(
\begin{array}{cccc}
1&0&0&0\\
0&1&0&0\\
2&2&-1&2\\
0&0&0&1\\
\end{array}
\right)\quad
S_4=\left(
\begin{array}{cccc}
1&0&0&0\\
0&1&0&0\\
0&0&1&0\\
2&2&2&-1\\
\end{array}
\right),}
\end{equation*}
derived by fixing three of the coordinates of $\mathbf v$ and solving $F(\mathbf v)=0$ for the fourth.  Note that each $S_i$ is of order $2$ and determinant $-1$.  Also, $S_i$ fixes all but the $i$th coordinate of $\mathbf v\in \mathbb Z^4$, producing a new curvature in the $i$th coordinate.

In their paper \cite{Apollo} the five authors Graham, Lagarias, Mallows, Wilks, and Yan ask several fundamental questions about the curvatures in a given integer ACP, which have mostly been resolved in \cite{Fuchs}, \cite{Fuchs1}, \cite{FuchsB}, and \cite{oh}.  In particular, they make some observations about the congruence classes of curvatures which occur in any given ACP, and suggest a ``strong density" conjecture, that every sufficiently large integer satisfying these congruence conditions should appear as a curvature in the packing.  In \cite{FuchsB}, the first author and Bourgain prove a weaker conjecture of Graham et.al. of this flavor that the integers appearing as curvatures in a given ACP make up a positive fraction of $\mathbb N$.  Proving the ``strong density" conjecture would be significantly more difficult.  In this paper, we use the $p$-adic description of the Apollonian orbit from \cite{Fuchs1} to formulate this conjecture precisely and provide strong experimental evidence in Section~\ref{locglobal} in support of it.  Our conjecture is specified further in the case of two different ACP's in Section~\ref{locglobal}.  It is stated generally here.
\begin{conj} {\bf Local to Global Principle for ACP's: }\label{LG}
Let $P$ be an integral ACP and let $P_{24}$ be the set of residue classes mod $24$ of curvatures in $P$.  Then there exists $X_P\in\mathbb Z$ such that any integer $x>X_P$ whose residue mod $24$ lies in $P_{24}$ is in fact a curvature of a circle in $P$.
\end{conj}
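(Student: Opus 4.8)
A natural route to Conjecture~\ref{LG} is to recast it as a counting problem for the orbit $A\cdot\mathbf v_0$ of a root quadruple $\mathbf v_0$ of $P$ and attack it by the circle method, using the spectral gap for $A$ to control the minor arcs; let me sketch this and indicate where it currently stalls. The first step is an elementary reduction. If $C\in P$ has curvature $n$, then $C$ was inscribed in a curvilinear triangle bounded by three circles of $P$, each of strictly smaller curvature, so $n$ is the curvature of a circle of $P$ tangent to some $C_0\in P$ with $a_0:=\mathrm{curv}(C_0)<n$. Conversely, a classical computation from the Descartes equation~\eqref{descartes} (Sarnak, \cite{ll}) shows that the curvatures of the circles of $P$ tangent to a fixed $C_0$ are exactly the values $\varphi_{C_0}(x,y)$, where $\varphi_{C_0}$ is an integral inhomogeneous binary quadratic polynomial whose degree-two part is positive-definite of discriminant $-4a_0^2$, and $(x,y)$ runs over the orbit $\Lambda_{C_0}\subset\mathbb Z^2$ of a primitive vector under a thin subgroup $\Gamma_{C_0}$ of $\mathrm{GL}_2(\mathbb Z)$ --- the image on the conic $\{F(a_0,\cdot,\cdot,\cdot)=0\}$ of the stabilizer of $C_0$ in $A$. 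So $n$ is a curvature of $P$ if and only if the finite count
\[
\mathcal N(n)\ :=\ \sum_{\substack{C_0\in P\\ a_0<n}}\ \#\bigl\{(x,y)\in\Lambda_{C_0}\ :\ \varphi_{C_0}(x,y)=n\bigr\}
\]
is positive, and Conjecture~\ref{LG} is the assertion that $\mathcal N(n)>0$ whenever $n$ is large and $n\bmod 24\in P_{24}$. Since the number of $C_0\in P$ with $a_0<x$ is known to be $\asymp x^{\delta}$, where $\delta\approx 1.3$ is the Hausdorff dimension of the residual set of $P$, we have a large family of equations --- one per circle --- and need only their total number of solutions to be nonzero.

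To estimate $\mathcal N(n)$ I would set up a Hardy--Littlewood dissection, detecting the equation $\varphi_{C_0}(x,y)=n$ by $\int_0^1 e(\theta(\varphi_{C_0}(x,y)-n))\,d\theta$ and inserting smooth weights on $a_0$ and on $(x,y)$ to localize the ranges. The major arcs --- $\theta$ within $1/n^{O(1)}$ of a rational $a/q$ with $q$ small --- should produce a main term of the shape $\kappa_\infty\cdot\mathfrak S(n)\cdot n^{\delta-1}$ with $\kappa_\infty>0$ an archimedean density and $\mathfrak S(n)=\prod_p\sigma_p(n)$ a singular series recording the $p$-adic densities of solutions in the orbit. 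Here the $p$-adic description of the Apollonian orbit from \cite{Fuchs1} does the arithmetic bookkeeping: for every modulus $q$ coprime to $6$ the reduction of $A$ is the full orthogonal group $\mathrm{O}_F(\mathbb Z/q\mathbb Z)$, so each $\sigma_p(n)$ with $p\ge 5$ is positive with no condition on $n$, while at $p=2$ and $p=3$ the reduction of $A$ stabilizes to an explicit proper subgroup at levels $8$ and $3$ respectively, and a finite computation should give $\sigma_2(n)\sigma_3(n)>0$ exactly when the class of $n$ modulo $\mathrm{lcm}(8,3)=24$ lies in $P_{24}$. This is precisely where the modulus $24$ and the set $P_{24}$ in the statement enter, and it yields $\mathfrak S(n)\gg 1$ --- hence a positive main term of order $n^{\delta-1}$ --- for every admissible $n$.

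The step I expect to be the real obstacle --- and the reason Conjecture~\ref{LG} is a conjecture rather than a theorem --- is the minor-arc bound. One needs genuine power cancellation in orbital exponential sums of the form $\sum_{\gamma\in A,\ \|\gamma\|\le T}e\bigl(\theta\,\ell(\gamma\mathbf v_0)\bigr)$ for $\theta$ on the minor arcs, where $\ell$ is the linear functional reading off a curvature, and one needs it uniformly enough that the total minor-arc contribution to $\mathcal N(n)$ is $o(n^{\delta-1})$ --- a delicate demand, since $\delta-1\approx 0.3$ is small. The expansion property of $A$ coming from Bourgain--Gamburd, together with the affine-sieve machinery, does supply nontrivial bounds of this type, and they are already strong enough to prove that the curvatures of $P$ occupy a positive proportion of all integers \cite{FuchsB}; pushing the same input to a density-one statement seems feasible with more work, but controlling the error terms sharply enough to conclude that \emph{every} sufficiently large admissible integer is attained appears to lie beyond the level of distribution presently available for this thin semigroup. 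A secondary difficulty, compounding the first, is that $\Lambda_{C_0}$ is genuinely thin in $\mathbb Z^2$, so the classical reduction theory of binary quadratic forms cannot be applied to the inner count directly: the lattice-point count over $\Lambda_{C_0}$ must be carried out inside $\Gamma_{C_0}$, once again with error terms governed by the size of the spectral gap. Barring a breakthrough on one of these fronts, what this route can deliver at present is the positive-density statement and --- with more effort --- its density-one refinement, not the full local-to-global principle of Conjecture~\ref{LG}.
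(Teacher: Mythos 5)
The statement you were given is Conjecture~\ref{LG}; the paper does not prove it, and you do not either, so there is no proof of record to measure your argument against. What the paper actually does in Section~\ref{locglobal} is purely local and experimental: it computes the admissible set $P_{24}$ and the exact densities $\gamma(n,P)$ of each residue class from the finite graphs $\mathcal O_8\times\mathcal O_3$ via Theorem~\ref{padicorbit} and Lemma~\ref{gammalemma} (Lemmata~\ref{bugob} and \ref{coinob}), predicts the mean representation numbers by (\ref{meaneq}), and then exhaustively enumerates all curvatures up to $5\cdot 10^8$ in the two packings $P_B$ and $P_C$ to check for exceptions. Your proposal is a different kind of object --- a road map for an analytic proof together with an honest account of where it stalls --- and as such it is broadly the right road map: it is essentially the strategy of \cite{ll} and \cite{FuchsB}, and you correctly identify the minor-arc/level-of-distribution problem for the thin group $A$ as the reason the statement remains a conjecture. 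Your account of where the modulus $24$ enters (full local solubility at $p\ge 5$ by Theorem~\ref{padicorbit}(iii), a genuine obstruction only at the levels $8$ and $3$) matches the paper's local analysis.

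One concrete error in your sketch: the ``secondary difficulty'' is not a difficulty. By Sarnak's computation in \cite{ll}, the stabilizer of a fixed circle $C_0$ in $A$ acts on the family of circles tangent to $C_0$ through a subgroup of \emph{finite index} in the relevant arithmetic group, so the curvatures of circles tangent to $C_0$ are the values of the shifted binary form $\varphi_{C_0}$ at essentially \emph{all} primitive integer vectors, not at a thin orbit $\Lambda_{C_0}\subset\mathbb Z^2$. The inner count is therefore a classical binary-form count with full reduction theory available; this is precisely the lever that makes the positive-density theorem of \cite{FuchsB} (and the prime counts of Section~\ref{primeACP}) accessible, and your pessimism on that point would, if correct, undercut even the results the paper takes as known. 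The genuine obstruction is the one you name first: summing these binary counts over the $\asymp n^{\delta}$ choices of $C_0$ with enough uniformity to rule out every exception, which no available error term controls.
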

We note that $X$ above depends on the packing $P$ under consideration.  In this paper, we investigate two ACP's which we call the {\it Bugeye} packing $P_B$ and the {\it Coins} packing $P_C$.  These packings are represented by right action of the Apollonian group on $(-1,2,2,3)$ and $(-11,21,24,28)$, respectively (see Fig.~\ref{bugcoin} for a picture).  In the case of $P_B$, our data suggests that $X_{P_B}$ exists and is $\leq 10^6$, as we find no integers $x>10^6$ which violate the above conjecture.  The data for $P_C$,  however, suggests that $X_{P_C}$ exists, but that it is $>10^8$.  Namely, there are integers $x>10^8$ in certain  residue classes in the set $S_{24}$ which do {\it not} appear as curvatures in the packing we consider.  We explain this further in Section~\ref{locglobal}.

\begin{figure}[H]
\centering
\includegraphics[height = 65 mm]{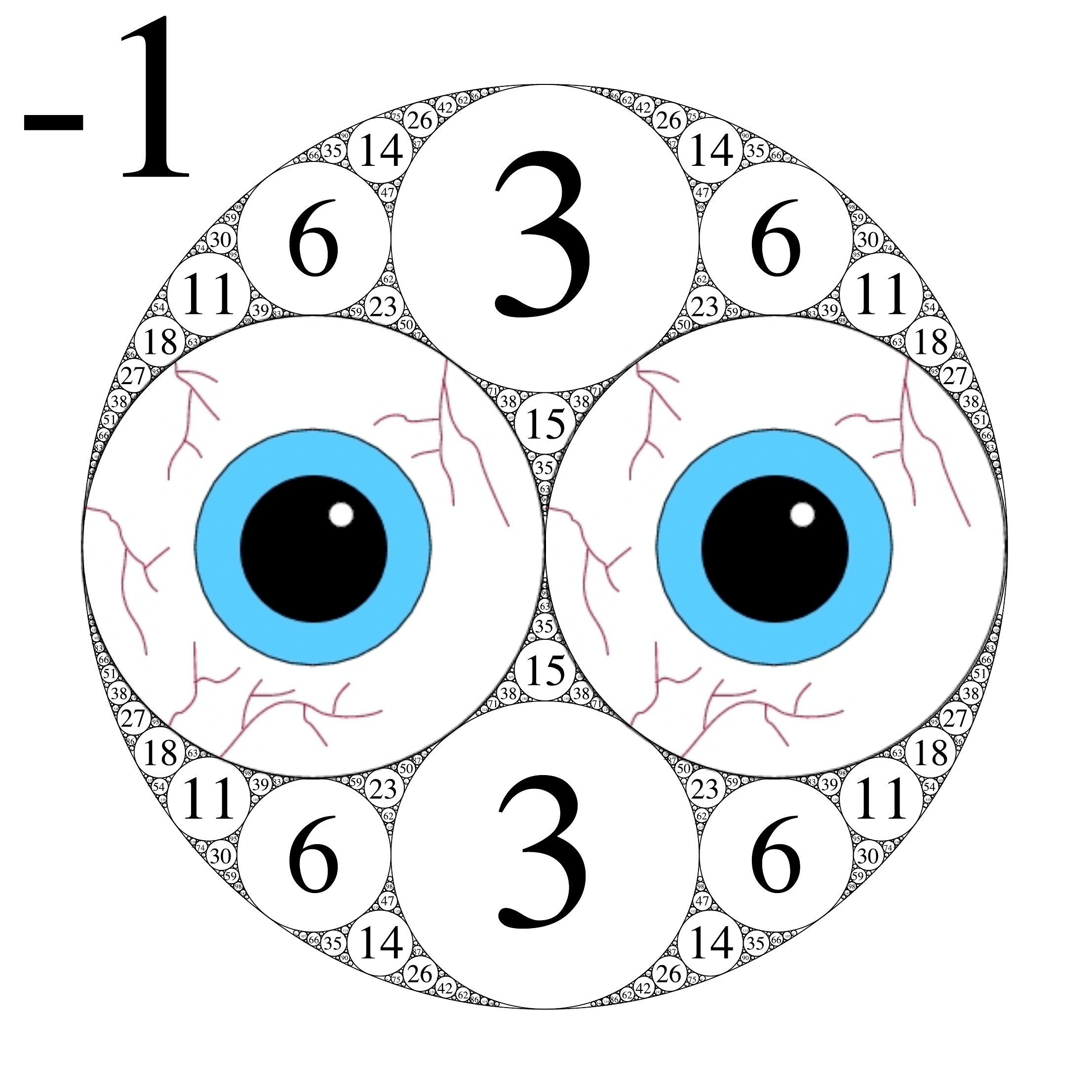}
\qquad\qquad
\includegraphics[height = 65 mm]{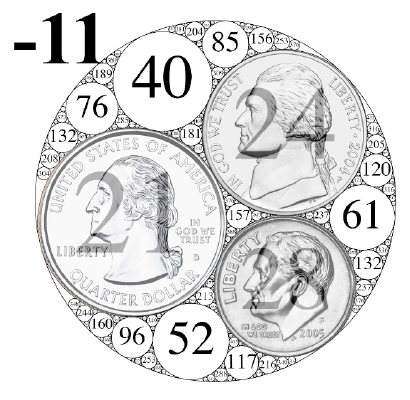}
\caption{{\it Bugeye} and {\it Coins} packings}\label{bugcoin}
\end{figure}

Another interesting problem regarding ACP's is counting circles of prime curvature in a given packing.  Sarnak proves in \cite{ll} that there are infinitely many circles of prime curvature in any packing.  In light of this, we give a heuristic in Section~\ref{primeACP} for the weighted prime count $\psi_P(x)$:
\begin{equation}
\psi_P(x) = \mathop{\sum_{a(C) \leq x}}_{a(C) \;\text{prime}} \log \bigl(a(C)\bigr)
\label{intropsi}
\end{equation}
where $C$ is a circle in the packing $P$ and $a(C)$ is its curvature.  This count is closely related to the number $\pi_P(x)$ of prime curvatures less than $x$ in a packing $P$ (see Remark~\ref{katsremark}).  We confirm experimentally that our heuristic holds for the packings $P_B$ and $P_C$.  We note that our heuristic does not depend on the chosen packing $P$ -- in fact, it yields the correct count of prime curvatures for all of the packings we checked.  We summarize it in the following conjecture:
\begin{conj}\label{pconj}
Let $N_P(x)$ be the number of circles in a packing $P$ of curvature less than $x$, and let $\psi_P(x)$ be as in (\ref{intropsi}).  Then as $x \rightarrow \infty$,
$$\psi_P(x) \sim L(2, \chi_4) \cdot N_P(x)$$
where $L(2, \chi_4)= 0.9159\dots$ is the value of the Dirichlet $L$-series at $2$ with character $\chi_4(p)= 1$ for $p\equiv 1\;(4)$ and $\chi_4(p)=-1$ for $p\equiv 3\;(4)$.\end{conj}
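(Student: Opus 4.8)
The statement is a conjecture, so the ``proof'' I would give is in fact a heuristic derivation of $L(2,\chi_4)$ as a singular series attached to the Apollonian orbit, together with the numerical verification carried out in Section~\ref{primeACP}; I do not expect an unconditional proof to be within reach (see the last paragraph). The plan has three parts: compute, for each prime $p$, the density $\rho(p)$ of circles of $P$ whose curvature is divisible by $p$; insert the $\rho(p)$ into a Hardy--Littlewood / affine-sieve heuristic for prime values; and evaluate the resulting Euler product.

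The local densities come from the $p$-adic structure of the orbit. Realize the circles of $P$ through the orbit $\mathcal O\subset\mathbb Z^4$ of the Apollonian group $A$ on the Descartes cone $\{F=0\}$, so that the curvatures occurring in $P$ are the coordinates of the vectors of $\mathcal O$ (each circle counted with its natural multiplicity); by the $p$-adic description of $\mathcal O$ in \cite{Fuchs1} (a form of strong approximation: for $p$ outside a fixed finite set the reduction $\mathcal O\bmod p$ is an entire $\textrm{O}_F(\mathbb F_p)$-orbit of nonzero isotropic vectors, while the remaining primes --- among them $2$ and $3$ --- are treated directly), $\rho(p)$ equals the proportion of nonzero solutions of $F=0$ over $\mathbb F_p$ lying on a coordinate hyperplane. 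Both point counts are classical. The hyperplane section is the nondegenerate ternary form $F(0,v_2,v_3,v_4)$, which has exactly $p^2-1$ nonzero zeros; and since the Gram matrix of $F=2\sum v_i^2-(\sum v_i)^2$ is $2I_4-J_4$ with determinant $-16$, the number of nonzero zeros of $F$ over $\mathbb F_p$ is $p^3+(p^2-p)\bigl(\tfrac{-16}{p}\bigr)-1=p^3+(p^2-p)\chi_4(p)-1$. Dividing,
$$\rho(p)=\frac{1}{p+1}\ \ (p\equiv1\,(4)),\qquad \rho(p)=\frac{p+1}{p^2+1}\ \ (p\equiv3\,(4)),\qquad \rho(2)=\frac12,$$
the last value being exact because $F(\mathbf v)=0$ forces exactly two of the four coordinates to be odd (one checks $F$ has no zero with all coordinates odd), so every Descartes quadruple is two-odd-two-even. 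Two features are worth isolating: $\rho(p)$ depends only on the Descartes form and not on $P$ --- which is why the heuristic is packing-independent --- and $\chi_4$ enters precisely as $\bigl(\tfrac{-16}{p}\bigr)=\bigl(\tfrac{-1}{p}\bigr)$, consistent with Sarnak's observation in \cite{ll} that the curvatures tangent to a fixed circle of curvature $a_0$ run through shifted values of binary quadratic forms of discriminant $-4a_0^2$.

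Next I would invoke the standard heuristic that, subject only to these congruence obstructions, the sequence of curvatures behaves like a random sequence of the same density, which predicts
$$\psi_P(x)\ \sim\ N_P(x)\cdot\prod_p\frac{1-\rho(p)}{1-\tfrac1p}.$$
Evaluating the Euler factors: for $p\equiv1\,(4)$ the factor is $\tfrac{p^2}{p^2-1}$, for $p\equiv3\,(4)$ it is $\tfrac{p^2}{p^2+1}$, and the factor at $2$ is $1$; that is, each factor equals $\bigl(1-\chi_4(p)p^{-2}\bigr)^{-1}$, so the product is $\prod_p\bigl(1-\chi_4(p)p^{-2}\bigr)^{-1}=L(2,\chi_4)=0.9159\ldots$, as claimed. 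One then passes between $\psi_P$ and the count $\pi_P$ of prime curvatures by partial summation (Remark~\ref{katsremark}) and compares the prediction with the tabulated data for $P_B$ and $P_C$; as a further consistency check, the same local densities reproduce the known asymptotic $N_P(x)\sim c_P x^{\delta}$, $\delta=1.3056\ldots$, of \cite{oh}.

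The main obstacle to turning this into a theorem is exactly what makes prime-counting in thin orbits difficult. A given integer $n$ occurs as a curvature with large multiplicity --- of order $n^{\delta-1}$ --- and this multiplicity is arithmetically structured (it is governed by representation numbers of $n$ by binary forms of discriminant $-4a_0^2$), so the curvatures are not a pseudorandom positive-density subset of $\mathbb N$ and the Hardy--Littlewood step cannot be justified naively. An honest proof would require a level-of-distribution estimate for the Apollonian group strong enough to feed a weighted sieve with an asymptotic main term, whereas the affine sieve of \cite{ll} at present yields only a lower bound of the expected order of magnitude. A secondary point of care is the analysis at $p=2$ and $p=3$, where the reduction of $A$ fails to be all of $\textrm{O}_F(\mathbb F_p)$ and the mod $24$ congruence structure intervenes; there one must verify directly that the local factors are trivial, as the closed form $L(2,\chi_4)$ requires.
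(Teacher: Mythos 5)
Your heuristic is correct and is essentially the derivation the paper gives in Section~\ref{theory}: the local densities $\rho(p)$ you extract from the isotropic-vector counts of the Descartes form over $\mathbb F_p$ are exactly the paper's $\beta(p)$ from Lemma~\ref{betathm} (including the check at $p=3$, where the explicit orbit computation gives $\beta(3)=\tfrac{2}{5}=\tfrac{p+1}{p^2+1}$, so the factor there is the generic one rather than trivial), and your singular series $\prod_p(1-\rho(p))(1-p^{-1})^{-1}=L(2,\chi_4)$ is precisely what the paper obtains as $H(0)$ in Lemma~\ref{infinitesumcalc} after writing $\sum_d\beta_i(d)\mu(d)d^{-s}=\zeta^{-1}(s+1)H(s)$. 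The only cosmetic differences are that the paper works coordinate-by-coordinate, so that $\beta_i(2)\in\{0,1\}$ with the even coordinates contributing $0$ and the odd ones a factor $2$ (averaging to your $\rho(2)=\tfrac12$ factor of $1$), and that it makes the M\"obius-randomness assumption explicit via $\Lambda(n)=-\sum_{d\mid n}\mu(d)\log d$ and the level of distribution from \cite{BoGaSa}.
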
 
Sarnak also shows in \cite{maa} that there are infinitely many pairs of tangent circles of prime curvature (we call these {\it kissing primes}).  We address the question of counting kissing primes in $P$ via the weighted sum $\psi_P^{(2)}(x)$:
\begin{equation}\label{introsum2}
\psi_P^{(2)}(x)\quad =\mathop{\sum_{(C,\,C')\in S}}_{a(C),\, a(C')<x}\log (a(C))\cdot \log (a(C'))
\end{equation}
where $S$ is the set of unordered pairs of tangent circles $(C,C')$ of prime curvature in a packing $P$, and $a(C)$ and $a(C')$ denote their respective curvatures.  In this case, it is less obvious what the relation is between $\psi_P^{(2)}(x)$ and the number $\pi_P^2(x)$ of kissing prime circles in a packing $P$ both of whose curvatures are less than $x$.  We therefore stick with $\psi_P^{(2)}(x)$ in our computation:
\begin{conj}\label{tpconj}
Let $\psi_P^{(2)}(x)$ be as in (\ref{introsum2}), and let $N_P(x)$ be the number of circles in a packing $P$ of curvature less than $x$.  Then
$$\psi_P^{(2)}(x)\sim c\cdot {L^2(2,\chi_4)}\cdot N_P(x),$$
where $N_{P}(x)$ is as above and $c= 1.646\dots$ is given by
$$2 \cdot \prod_{p\equiv 3\,(4)}\left(1-\frac{2}{p(p-1)^2}\right).$$
\end{conj}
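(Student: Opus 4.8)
The plan is to \emph{derive} the asymptotic of Conjecture~\ref{tpconj} as a heuristic --- in the same framework as Conjecture~\ref{pconj}, of which it is the pair analogue --- and then to corroborate it numerically; an unconditional proof is not within reach, for the same reason that an asymptotic count of primes represented by a binary quadratic form along a \emph{thin} orbit is not within reach.

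I would begin by recalling the single-prime picture. By the work of Kontorovich--Oh one has $N_P(x)\sim c_P\,x^{\delta}$ with $\delta=1.3057\ldots$, and the $p$-adic description of the Apollonian orbit from \cite{Fuchs1} lets one model the curvature of a ``random'' circle of $P$ by one coordinate of a random integer solution of $F=0$, which, since the Apollonian group is Zariski dense in $\mathrm{O}_F$, is equidistributed modulo every prime power across all solutions of $F\equiv 0$. With the $\log$-weighting in $\psi_P$ absorbing the archimedean $1/\log$, this predicts $\psi_P(x)\sim N_P(x)\prod_p\sigma_p$, where $\sigma_p=\bigl(1-\operatorname{dens}\{v_1\equiv 0\}\bigr)/(1-1/p)$ and the density is computed from the number of $\mathbb F_p$-points of the Descartes quadric $2\sum v_i^2-(\sum v_i)^2=0$ and of its hyperplane section $v_1=0$. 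Since that quadric is split over $\mathbb F_p$ exactly when $-1$ is a square mod $p$, i.e.\ $p\equiv 1\,(4)$, one gets $\sigma_p=\bigl(1-\chi_4(p)p^{-2}\bigr)^{-1}$ for all $p\ge 5$, and the primes $2$ and $3$ are pinned down by the congruence restrictions recorded in $P_{24}$; the resulting product is $L(2,\chi_4)$, which is the content of Conjecture~\ref{pconj}.

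The new ingredient is the joint distribution of the curvatures of two tangent circles. Every tangency of $P$ is between a circle and one of the three circles in whose curvilinear triangle it was inscribed, and such a ``parent'' has strictly smaller curvature; hence the kissing pairs with both curvatures below $x$ number $3N_P(x)+O(1)$, and a tangent pair corresponds to two distinct coordinates of a random solution of $F=0$. Running the primality model on both coordinates at once, the prediction becomes $\psi_P^{(2)}(x)\sim(\text{const})\cdot N_P(x)$, with the constant built from the joint $p$-adic densities $\beta_p=\operatorname{dens}\{v_1,v_2\not\equiv 0\}/(1-1/p)^2$. The two primality events are correlated because $v_1$ and $v_2$ lie on the common conic $F=0$, so I would write $\beta_p=\sigma_p^{\,2}\,\gamma_p$ and compute the correlation factor $\gamma_p$ by an inclusion--exclusion over the sections $v_1=0$, $v_2=0$, $v_1=v_2=0$ of the quadric: the split/nonsplit dichotomy again does the work, and one finds $\gamma_p=1$ for $p\equiv 1\,(4)$ (the split case factors cleanly) and $\gamma_p=1-\dfrac{2}{p(p-1)^2}$ for $p\equiv 3\,(4)$. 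Collecting $\prod_p\sigma_p^{\,2}=L^2(2,\chi_4)$ together with $\prod_p\gamma_p$, and absorbing the three-parents bookkeeping for the $3N_P(x)$ kissing pairs and the anomalous prime $2$ into the leading rational factor $2$, gives exactly $\psi_P^{(2)}(x)\sim c\cdot L^2(2,\chi_4)\cdot N_P(x)$ with $c=2\prod_{p\equiv 3\,(4)}\left(1-\dfrac{2}{p(p-1)^2}\right)=1.646\ldots$, independently of $P$.

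The principal obstacle is that this argument is, for now, irreducibly heuristic: the affine sieve of Bourgain--Gamburd--Sarnak that underlies Sarnak's infinitude results in \cite{ll} and \cite{maa} delivers only the correct order of magnitude for prime counts along thin Apollonian orbits, not the constant, and pinning down the asymptotic constant seems to be at least as hard as the (open) analogous problem for a single binary quadratic form restricted to a thin group. A secondary, genuinely laborious, obstacle is the local computation itself --- the point counts for $F=0$ and its coordinate sections over $\mathbb Z/p^k$, and the reconciliation of the primes $2$ and $3$ with the mod-$24$ structure $P_{24}$ --- which must be carried out carefully for the clean Euler product above to fall out. For these reasons the statement is offered as a conjecture, with the numerical agreement for the \emph{Bugeye} and \emph{Coins} packings in Section~\ref{primeACP} taking the place of a proof.
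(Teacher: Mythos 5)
Your proposal is correct and follows essentially the same route as the paper: the same local densities (the marginal $\beta(p)$ from point counts on the Descartes quadric and its hyperplane sections, the joint density $g(p)$ from the codimension-two section, the split/non-split dichotomy at $p\equiv 1,3\ (4)$, and the parity structure at $p=2$ singling out the two everywhere-odd coordinates), assembled into the same Euler product $2\,L^2(2,\chi_4)\prod_{p\equiv 3\,(4)}\bigl(1-\tfrac{2}{p(p-1)^2}\bigr)$ with the $3N_P(x)$ tangency count and the Kontorovich--Oh asymptotic, all offered as a heuristic backed by the affine sieve and by data. The only difference is organizational: you compute the singular series directly by inclusion--exclusion over the sections $v_i=0$, $v_j=0$, $v_i=v_j=0$ (your correlation factor $\gamma_p=\frac{1-2\beta(p)+g(p)}{(1-\beta(p))^2}$ does match $1$ and $1-\frac{2}{p(p-1)^2}$ in the two residue classes), whereas the paper reaches the same product by differentiating a two-variable Dirichlet series after a M\"obius expansion.
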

These heuristics are computed by counting primes in orbits of the Apollonian group, which is possible due to recent results of Bourgain, Gamburd, and Sarnak in \cite{BoGaSa}, as well as the recent asymptotic count of Kontorovich and Oh in \cite{oh} for the number $N_P(x)$.  Our computer experiments were conducted using Java and Matlab, and the programs are available at http://www.math.princeton.edu/\~{}ksanden/ElenaKatCode.html.  A brief description of our algorithm and a discussion of its running time can be found in Section~\ref{algorithm}.
%
%
\subsection{Arithmetic structure of the Apollonian group and its orbit}\label{Prelim}
Since all of the computations and claims in this paper concern the orbit $\mathcal O$ of the Apollonian group $A$ acting on a vector $\mathbf v\in \mathbb Z^4$, we recall the description of the orbit modulo $d$ for any integer $d$ from \cite{Fuchs1}.  We use this description throughout Sections~\ref{primeACP} and \ref{locglobal}.

\begin{thm}(Fuchs): \label{padicorbit}
Let $\mathcal O$ be an orbit of $A$ acting on a root quadruple\footnote[2]{A root quadruple of a packing $P$ is essentially the $4$-tuple of the curvatures of the largest four circles in $P$.  It is well defined and its properties are discussed in \cite{Apollo}.} of a packing, and let $\mathcal O_d$ be the reduction of this orbit modulo an integer $d>1$.  Let $C=\{\mathbf{v} \not = \mathbf{0} \,| F(\mathbf{v})=0\}$ denote the cone without the origin, and let $C_d$ be $C$ over $\mathbb Z/d\mathbb Z$:
$$C_d=\{\mathbf{v}\in\mathbb Z/d\mathbb Z\, | \mathbf{v}\not \equiv \mathbf{0}\, (d),\, F(\mathbf{v})\equiv 0\, (d)\}$$
Write $d=d_1d_2$ with $(d_2,6)=1$ and $d_1 = 2^n3^m$ where $n,m \geq 0$.  Write  $d_1=v_1v_2$ where $v_1=gcd(24,d_1)$. Then 
\begin{itemize}
\item[(i)] The natural projection $\mathcal O_d\longrightarrow \mathcal O_{d_1}\times \mathcal O_{d_2}$ is surjective.
\item[(ii)]Let $\pi:C_{d_1}\rightarrow C_{v_1}$ be the natural projection.  Then $\mathcal O_{d_1} = \pi^{-1}(\mathcal O_{v_1})$.
\item[(iii)]The natural projection $\mathcal O_{d_2} \longrightarrow \prod_{p^r||d_2}\mathcal O_{p^{r}}$ is surjective and $\mathcal O_{p^r}= C_{p^r}$.
\end{itemize}
\end{thm}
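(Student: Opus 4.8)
The plan is to reduce the whole statement to strong approximation for the Apollonian group together with an explicit $2$-adic and $3$-adic analysis of the four generators. First I would record the structural facts: the Descartes form $F$ is a nondegenerate quaternary quadratic form of real signature $(3,1)$, and each $S_i$ acts on $\mathbb Z^4$ as a reflection in $\mathrm O_F$. Four reflections already generate a Zariski-dense subgroup, so the Zariski closure of $A$ is $\mathrm O_F$ and that of $A^{+}:=A\cap\mathrm{SO}_F$ is $\mathrm{SO}_F$; passing to the spin cover $\mathrm{Spin}_F$, which is semisimple and simply connected, the preimage of $A^{+}$ is Zariski-dense. By the Matthews--Vaserstein--Weisfeiler strong approximation theorem there is then a finite set $S_0$ of primes such that, for $p\notin S_0$ and all $r\ge 1$, the reduction of $A^{+}$ modulo $p^{r}$ is onto $\mathrm{Spin}_F(\mathbb Z/p^{r}\mathbb Z)$, with independent reductions at distinct primes. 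Projecting back to $\mathrm O_F$, the image of $A$ in $\mathrm O_F(\mathbb Z/p^{r}\mathbb Z)$ contains the spinorial kernel $\Omega_F(\mathbb Z/p^{r}\mathbb Z)$.

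The second step is to pin down $S_0=\{2,3\}$ and to establish transitivity. For $p\ge 5$ the bound on $S_0$ is a finite check on $S_1,\dots,S_4\bmod p$, and the passage from level $p$ to level $p^{r}$ is the standard congruence-filtration argument: the kernel of $\mathrm O_F(\mathbb Z/p^{r}\mathbb Z)\to\mathrm O_F(\mathbb Z/p\mathbb Z)$ is a $p$-group whose graded pieces are copies of $\mathfrak{so}_F\otimes\mathbb F_p$, and the commutators of the $S_i$ span it. For transitivity, the special orthogonal group of a nondegenerate quaternary form over $\mathbb F_p$ acts transitively on its nonzero isotropic vectors and the stabilizer of such a vector realizes every spinor norm, so $\Omega_F(\mathbb F_p)$ is itself transitive there; this lifts to $\mathbb Z/p^{r}\mathbb Z$ via Hensel's lemma, the cone being smooth away from its vertex and $\mathbf v_0$ being primitive. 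Combining the two points gives the local half of (iii): $\mathcal O_{p^{r}}=\Omega_F(\mathbb Z/p^{r}\mathbb Z)\cdot\mathbf v_0=C_{p^{r}}$.

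The gluing assertions --- surjectivity of $\mathcal O_d\to\mathcal O_{d_1}\times\mathcal O_{d_2}$ in (i) and of $\mathcal O_{d_2}\to\prod_{p^{r}\|d_2}\mathcal O_{p^{r}}$ in (iii) --- are statements about independence of congruence conditions, and I would obtain them from a Goursat argument for the image of $A$ in a product $\prod_i\mathrm O_F(\mathbb Z/m_i\mathbb Z)$ over pairwise-coprime moduli, each a power of $2$ or $3$ or a prime power $\ge 5$. The only nontrivial common quotient of two factor images is the determinant, because the factor images at primes $\ge 5$ contain the perfect groups $\Omega_F(\mathbb Z/p^{r}\mathbb Z)$, whose composition factors have order divisible by $p$ and so cannot occur in a $\{2,3\}$-power factor or in a factor at another prime. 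The determinant ambiguity is harmless: since $\Omega_F(\mathbb Z/p^{r}\mathbb Z)$ is transitive on $C_{p^{r}}$, the stabilizer of $\mathbf v_0$ in each factor contains an element of determinant $-1$, so any prescribed residues in $\mathcal O_{d_1}$ and in the various $\mathcal O_{p^{r}}$ are realized simultaneously by a single $g\in A$. This gives (i) together with the surjectivity half of (iii).

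Part (ii) --- $\mathcal O_{d_1}=\pi^{-1}(\mathcal O_{v_1})$ with $d_1=2^{n}3^{m}$ and $v_1=\gcd(24,d_1)$ --- asserts that strong approximation fails at $2$ and $3$, but only mildly, all of the obstruction being visible modulo $24$; this is the step I expect to be the genuine obstacle. The inclusion $\mathcal O_{d_1}\subseteq\pi^{-1}(\mathcal O_{v_1})$ is immediate. For the reverse I would (a) compute $\mathcal O_{v_1}$ and the image of $A$ in $\mathrm O_F(\mathbb Z/v_1\mathbb Z)$ outright --- a finite computation since $v_1\mid 24$ --- and (b) show, by explicit manipulation of $S_1,\dots,S_4$ inside $\mathrm O_F(\mathbb Z_2)$ and $\mathrm O_F(\mathbb Z_3)$, that the closure of $A$ contains the principal congruence subgroup of level $8$ in $\mathrm O_F(\mathbb Z_2)$ and of level $3$ in $\mathrm O_F(\mathbb Z_3)$; equivalently, that beyond those levels the congruence filtration is abelian and is spanned by reductions of suitable commutators of the $S_i$. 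Since $\operatorname{lcm}(8,3)=24$, step (b) forces the fibers of $C_{2^{k+1}}\to C_{2^{k}}$ for $k\ge 3$, and of $C_{3^{k+1}}\to C_{3^{k}}$ for $k\ge 1$, to be single orbits of those congruence subgroups; combining this with (a), an induction on the exponents, and the coprime gluing of (i) between the $2$-part and the $3$-part of $d_1$, yields $\mathcal O_{d_1}=\pi^{-1}(\mathcal O_{v_1})$. The level-$8$ and level-$3$ computations in (b) are the crux --- the only place where the specific matrices $S_1,\dots,S_4$ enter essentially rather than through soft group theory --- and obtaining exactly these levels (rather than something larger) is what fixes the modulus $24$ appearing throughout Conjecture~\ref{LG}.
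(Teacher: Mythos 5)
You should first be aware that this paper does not actually prove Theorem~\ref{padicorbit}: it is imported wholesale from \cite{Fuchs1} (see also \cite{Fuchs}), and the only indication of proof given here is the single sentence that the result ``is obtained by analyzing the reduction modulo $d$ of the inverse image of the Apollonian group $A$ in the spin double cover of $SO_F$.'' Your architecture --- pass to $A\cap\mathrm{SO}_F$, lift to the simply connected spin group, invoke Matthews--Vaserstein--Weisfeiler strong approximation off a finite set $S_0$, glue coprime moduli, and isolate the genuine obstruction at $2$ and $3$ by hand --- is exactly the route that sentence describes, so in strategy you agree with the source. Where the source differs in execution is that it exploits the explicit exceptional isomorphism identifying $\mathrm{Spin}_F$ with a form of $\mathrm{SL}_2$ over $\mathbb{Q}(i)$, so that the preimage of $A\cap\mathrm{SO}_F$ becomes a concrete subgroup of $\mathrm{SL}_2(\mathbb{Z}[i])$; this is what makes the two hard points in your sketch tractable, namely (a) showing $S_0=\{2,3\}$ uniformly (via the known subgroup structure of $\mathrm{SL}_2(\mathbb{F}_q)$, not via a ``finite check on $S_1,\dots,S_4\bmod p$,'' which as stated is one check per prime and does not bound $S_0$), and (b) the level computation at $2$ and $3$.

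As a standalone proof your proposal has two genuine gaps. First, in the Goursat step you assert that the only common quotient of factor images is the determinant; but the orthogonal groups here also admit spinor-norm and central quotients of exponent $2$, so there are several independent $\mathbb{Z}/2$ quotients to reconcile, not one --- the clean fix is to run the entire gluing argument upstairs in the spin group, where the relevant images are perfect, and only descend to the orbit statement at the end (which is presumably why the source works there). Second, and more importantly, the entire content of part (ii) --- that the closure of the image of $A$ contains the principal congruence subgroups of level $8$ in $\mathrm{O}_F(\mathbb{Z}_2)$ and level $3$ in $\mathrm{O}_F(\mathbb{Z}_3)$, so that the obstruction is exactly mod $24$ and not mod some larger $2^a3^b$ --- is announced as ``explicit manipulation'' but never performed. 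You correctly identify this as the crux, but identifying the crux is not the same as resolving it; everything quantitative in the theorem (and in Conjecture~\ref{LG}) hangs on that computation.
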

\noindent This result is obtained by analyzing the reduction modulo $d$ of the inverse image of the Apollonian group $A$ in the spin double cover of $SO_F$.  We note that Theorem~\ref{padicorbit} implies that the orbit $\mathcal O$ of $A$ has multiplicative structure in reduction modulo $d=\prod_{p^r||d}p^r$ and that it is completely characterized by its reduction mod $24$, or by $\mathcal O_{24}$ in our notation.  This explains the dependence on $P_{24}$ in Conjecture~\ref{LG}.
\vspace{0.3in}

\noindent {\bf Acknowledgements:} We thank Peter Sarnak, Alex Kontorovich, and Kevin Wayne for many insightful conversations and helpful suggestions.

\section{Prime number theorems for ACP's}\label{primeACP}
In \cite{BoGaSa}, Bourgain et.al. construct an affine linear sieve that gives lower and upper bounds for  prime and almost-prime points in the orbits of certain groups.  In this section, we use their analysis to predict precise asymptotics on the number of prime curvatures less than $x$, as well as the number of pairs of tangent circles of prime curvature less than $x$ in a given primitive Apollonian packing $P$.   The conditions associated with the affine linear sieve for $A$ are verified in \cite{BoGaSa}.  We recall the setup below.

Let $a_n=\#\{\mbox{circles of curvature }n {\mbox{ in a bounded packing }}P\}$, and note that $a_n$ is finite since the number of circles of any given radius can be bounded in terms of the area of the outermost circle.  We consider $1\leq n\leq x$ and note that the sum
$$\sum_n a_n = N_P(x),$$
where $N_P(x)$ is the number of circles of curvature less than $x$ and is determined by the asymptotic formula in \cite{oh} (see Lemma~\ref{oh}).    Key to obtaining our asymptotics is computing the averages of progressions mod $d$ of curvatures less than $x$, where $d>1$ ranges over positive square-free integers of suitable size.  To this end, we define
$$X_d= \sum_{n \equiv 0\,(d)}a_n$$
and introduce a multiplicative density function $\beta(d)$ for which 
$$X_d=\beta(d)\cdot N_P(x) +r(A,d)$$
where the remainder $r(A,d)$ is small according to the results in \cite{BoGaSa}.

In the case of ACP's, we define $\beta(d)$ as follows.  Let $\mathcal O$ be an integral orbit of $A$, and let $\mathcal O_d$ be the reduction of $\mathcal O$ modulo $d$ for a square-free positive integer $d$. Then
\begin{equation}\label{betadef}
\beta_j(d)=\dfrac{\#\{\mathbf v \in \mathcal O_d \, | \, v_j = 0\}}
{\#\{\mathbf v \in \mathcal O_d\}}
\end{equation}
where $v_j$ is the $j$th coordinate of $\mathbf v$.  We recall from Theorem~\ref{padicorbit} that the orbit $\mathcal O_d$ has a multiplicative structure which carries over to the function $\beta_j$ so that
$$\beta_j(d)=\prod_{p|d}\beta_j(p).$$
Thus in order to evaluate $\beta_j(d)$ for arbitrary square-free $d$, we need only to determine $\beta_j(p)$ for $p$ prime.  This is summarized in the following theorem.
\begin{lemma}\label{betathm}
Let $d = \prod p_i$ be the prime factorization of a square-free integer $d>1$.  Then
\begin{itemize}
 \item[(i)] $\beta_j(d)=\prod\beta_j(p_i)$ for $1\leq j\leq 4$.
\item[(ii)] For $p\not =2$, we have
$$\beta_j(p) = \beta_k(p) \mbox{ for $1\leq j,k \leq 4.$}$$
\item[(iii)] For any orbit $\mathcal O$ there exist two coordinates, $i$ and $j$, such that
$$\beta_i(2)=\beta_j(2)=1,$$
$$\beta_k(2)=0 {\mbox{ for $k\not= i, j$.}}$$
We say that the $i$th and $j$th coordinates are even throughout the orbit, while the other two coordinates are odd throughout the orbit.
\item[(iv)] For $p\not=2$, let $\beta(p)=\beta_i(p)$ for $1\leq i \leq 4$.  Then
\begin{equation}\label{bvalue}
                \beta(p)= \left\{ \begin{array}{ll}
                			\frac{1}{p+1}& \mbox{for $p\equiv 1$ mod $4$} \\
                			\frac{p+1}{p^2+1} & \mbox{for $p\equiv 3$ mod $4$}\\
                			  \end{array}
                		 \right. \\
\end{equation}
\end{itemize}
\end{lemma}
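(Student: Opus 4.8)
The plan is to split the statement by the ``size'' of the prime and to reduce everything to the explicit description of the reduced orbit in Theorem~\ref{padicorbit}. Part (i) is then purely formal: the remark following Theorem~\ref{padicorbit} says that for square-free $d=\prod p_i$ the Chinese Remainder Theorem identifies $\mathcal O_d$ with $\prod_i\mathcal O_{p_i}$ inside $\prod_i\mathbb Z/p_i\mathbb Z$, and under this identification the condition $v_j\equiv 0\ (d)$ becomes the conjunction of the conditions $v_j\equiv 0\ (p_i)$. Taking cardinalities and dividing by $\#\mathcal O_d=\prod_i\#\mathcal O_{p_i}$ gives $\beta_j(d)=\prod_i\beta_j(p_i)$, so from now on I only need primes.

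For part (iii) I would work modulo $2$. Since $F(\mathbf v)=2\sum v_i^2-(\sum v_i)^2\equiv(\sum v_i)^2\ (2)$, a vector lies in $C_2$ exactly when an even number of its coordinates are odd; and because each generator $S_i$ replaces $v_i$ by $-v_i+2(\cdots)\equiv v_i\ (2)$ while fixing the other coordinates, \emph{the parity of each coordinate is constant along the whole orbit}, so $\mathcal O_2$ consists of a single vector. Primitivity rules out all four coordinates being even, and no integral Descartes quadruple can have three odd coordinates: if $v_1,v_2,v_3$ were odd, then a fourth integral curvature $v_1+v_2+v_3\pm 2\sqrt{v_1v_2+v_1v_3+v_2v_3}$ would force $v_1v_2+v_1v_3+v_2v_3$ to be a perfect square, whereas a short case check shows that sum is always $\equiv 3\ (4)$ (this is also observed in \cite{Apollo}). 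Hence exactly two coordinates, say the $i$th and $j$th, are even throughout, so $\beta_i(2)=\beta_j(2)=1$ and $\beta_k(2)=0$ otherwise.

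For parts (ii) and (iv) at a prime $p\geq 5$, Theorem~\ref{padicorbit}(iii) gives $\mathcal O_p=C_p$, and since $F$ is symmetric under permuting the coordinates, $\#\{\mathbf v\in C_p:v_j=0\}$ is independent of $j$, which is (ii). For (iv) I would count isotropic vectors of the Descartes form over $\mathbb F_p$: its Gram matrix is $2I-J$ ($J$ the all-ones matrix), with eigenvalues $-2,2,2,2$, so $\det=-16$ and $F$ is a nondegenerate quaternary form with discriminant in the square class of $-1$; hence it has $p^3+\chi_4(p)(p^2-p)$ zeros in $\mathbb F_p^4$, i.e. $\#C_p=p^3+\chi_4(p)(p^2-p)-1$. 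Setting $v_1=0$ leaves the nondegenerate ternary form $2I_3-J_3$, which has $p^2$ zeros, hence $p^2-1$ nonzero ones. Therefore
$$\beta(p)=\frac{p^2-1}{p^3+\chi_4(p)(p^2-p)-1},$$
and a short factorization (denominator $(p-1)(p+1)^2$ when $p\equiv 1\ (4)$, and $(p-1)(p^2+1)$ when $p\equiv 3\ (4)$) yields the two cases in (\ref{bvalue}).

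The remaining case is $p=3$, and I expect pinning down $\mathcal O_3$ to be the main obstacle, since the $\gcd(24,\cdot)$ bookkeeping in Theorem~\ref{padicorbit} becomes vacuous there and does not by itself force $\mathcal O_3=C_3$. I would settle it by a direct finite computation of the orbit modulo $3$ (or by going back into the analysis of \cite{Fuchs1}): one finds $\mathcal O_3=C_3$, a set of $20$ vectors, of which $8$ have a prescribed coordinate equal to $0$, so $\beta_j(3)=\tfrac{8}{20}=\tfrac{2}{5}=\tfrac{3+1}{3^2+1}$ for every $j$, which simultaneously finishes (ii) and (iv) at $p=3$. The only genuinely arithmetic input beyond point-counting on conics and quadrics is the mod-$4$ non-square argument used to exclude three odd coordinates in part (iii); everything else is bookkeeping with Theorem~\ref{padicorbit} and standard counts.
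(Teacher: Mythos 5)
Your approach is essentially the paper's: multiplicativity and $\mathcal O_p=C_p$ for $p>3$ from Theorem~\ref{padicorbit}, point counts for the quaternary and ternary Descartes quadrics over $\mathbb F_p$, a parity argument at $p=2$, and a direct finite computation at $p=3$. Your counts $p^3+\chi_4(p)(p^2-p)-1$ and $p^2-1$ agree with (\ref{orbit}) and (\ref{orbitbeta}), and the factorizations give (\ref{bvalue}) correctly. Your treatment of $p=2$ is actually more self-contained than the paper's, which cites \cite{sand} for the two-even--two-odd fact and then uses triviality of $A$ mod $2$.

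The one slip is at $p=3$: it is not true that $\mathcal O_3=C_3$. The paper's explicit computation (Figures~\ref{mod31} and \ref{mod32}) shows that $C_3$, which indeed has $20$ nonzero points, splits into \emph{two} $A$-orbits of $10$ vectors each, and a given packing realizes only one of them; Theorem~\ref{padicorbit} forces $\mathcal O_{p^r}=C_{p^r}$ only for $p>3$, as you yourself suspected. Your final value survives because in each of the two orbits exactly $4$ of the $10$ vectors have a prescribed coordinate equal to $0$, so $\beta(3)=\tfrac{4}{10}=\tfrac{2}{5}$ either way --- the same ratio as your $\tfrac{8}{20}$ over all of $C_3$. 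So the number you report is right, but the structural claim you base it on is false, and the fact that the zero-coordinate vectors are split evenly between the two orbits is an additional point your computation would need to check rather than something that follows from $\mathcal O_3=C_3$.
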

\begin{proof}
The statements in (i) and (ii) follow from Theorem~\ref{padicorbit}.  Let $\mathbf v$ be the root quadruple (the quadruple of the smallest curvatures) of the packing $P$.  To show (iii), note that any quadruple in a primitive integral ACP consists of two even and two odd curvatures (see \cite{sand} for a discussion). Without loss of generality, assume $\mathbf v = (1,1,0,0)$ mod $2$, so $i=1$ and $j=2$ in this case.  Since the Apollonian group is trivial modulo $2$, we have that every vector in the orbit is of the form
$(1,1,0,0)$
mod $2$, so we have what we want.

To prove (iv), we use results in \cite{Fuchs1} and recall from Theorem~\ref{padicorbit} that $\mathcal O_p$ is the cone $C_p$ for $p>3$.  Thus the numerator of $\beta(p)$ is
$$\#\{\mathbf{v} \in \mathcal O_p \, | \, v_j = 0\}= \#\{(v_1,v_2,v_3)\in\mathbb F_p^3-\{{\bf 0}\}\, | \, F(v_1,v_2,v_3,0)=0\}$$
where $F$ is the Descartes quadratic form and $p>3$.  So the numerator counts the number of nontrivial solutions to the ternary quadratic form obtained by setting one of the $v_i$ in the Descartes form $F(\mathbf v)$ to $0$.  Similarly we have that the denominator of $\beta(p)$ is
$$\#\{\mathbf{v} \in \mathcal O_p\}=\#\{(v_1,v_2,v_3,v_4)\in\mathbb F_p^4-\{{\bf 0}\} \, | \, F(v_1,v_2,v_3,v_4)=0\}$$
where $p>3$.  So the denominator counts the number of nontrivial solutions to the Descartes form.  The number of nontrivial solutions to ternary and quaternary quadratic forms over finite fields is well known (see \cite{Cassels}, for example).  Namely,
\begin{equation}\label{orbit}
\#\{(v_1,v_2,v_3,v_4)\in\mathbb F_p^4-\{{\bf 0}\} \, | \, F(v_1,v_2,v_3,v_4)=0\} =  \left\{ \begin{array}{ll}
                			p^3+p^2-p-1& \mbox{for $p\equiv 1$ mod $4$} \\
                			p^3-p^2+p-1& \mbox{for $p\equiv 3$ mod $4$}\\
                			  \end{array}
                		 \right. \\
\end{equation}
for $p>3$, and
\begin{equation}\label{orbitbeta}
\#\{(v_1,v_2,v_3)\in\mathbb F_p^3-\{{\bf 0}\} \, | \, F(v_1,v_2,v_3,0)=0\}= p^2-1\mbox{ for all odd primes $p$}.
\end{equation}
Combining (\ref{orbit}) and (\ref{orbitbeta}), we obtain the expression in (\ref{bvalue}) for $p>3$.  For $p=3$, we compute $\mathcal O_p$ explicitly and find that there are two possible orbits of $A$ modulo $3$ which are illustrated via finite graphs in Fig.~\ref{mod31} and Fig.~\ref{mod32}.  Both of these orbits consist of $10$ vectors $\mathbf v\in \mathbb Z^4$.  In both orbits, $4$ of the vectors $\mathbf v$ have $v_i=0$ for any $1\leq i\leq 4$.  Thus $\beta(3)=\frac{2}{5}$ as desired.
\end{proof}
\begin{figure}[H]
\centering
\includegraphics[height = 54 mm]{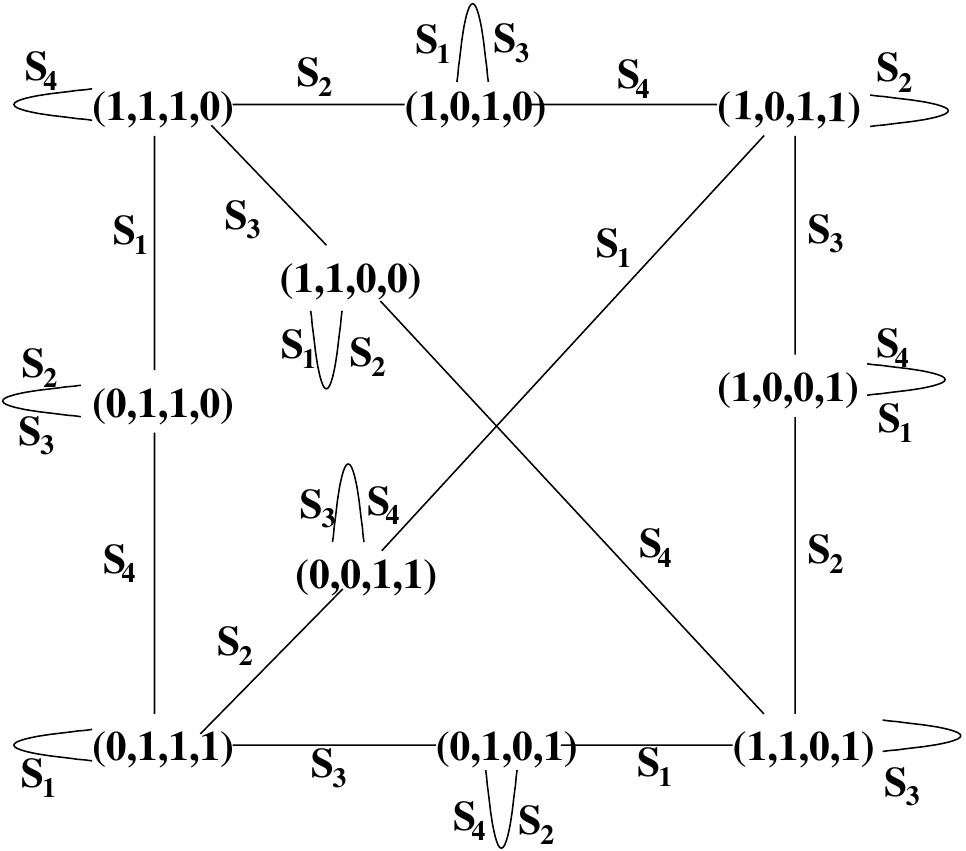}
\caption{Orbit I modulo $3$}\label{mod31}
\end{figure}
\vspace{0.3in}

\begin{figure}[H]
\centering
\includegraphics[height = 54 mm]{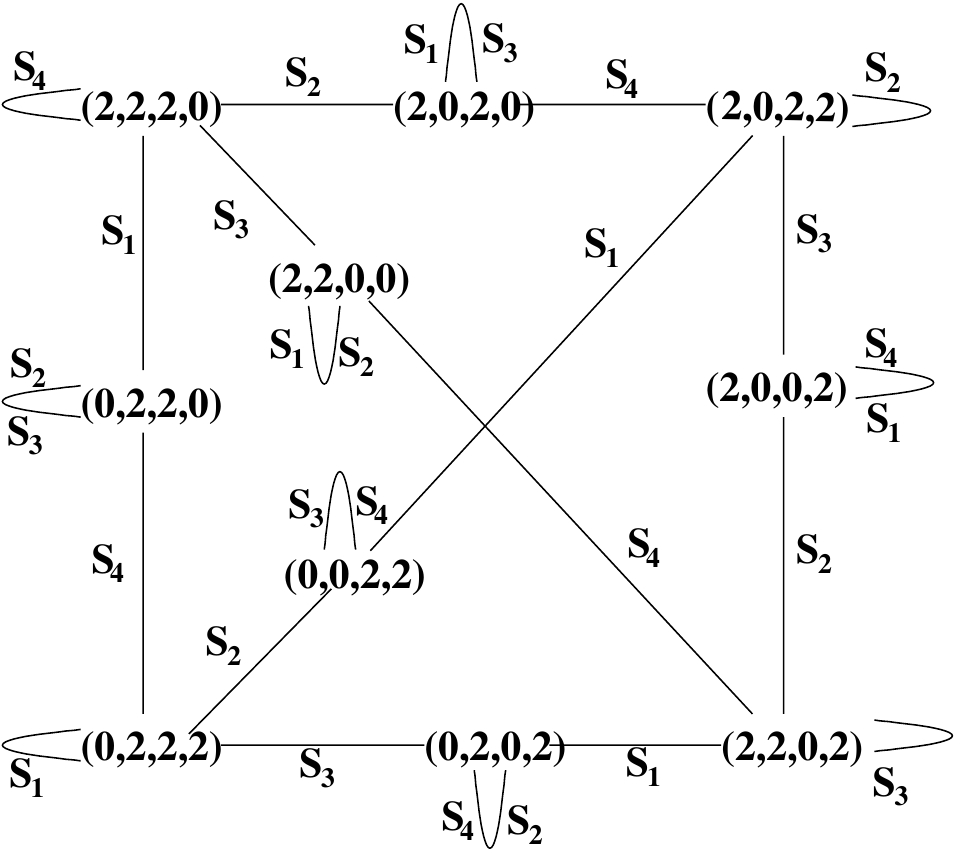}
\caption{Orbit II modulo $3$}\label{mod32}
\end{figure}

In the following two sections, we use this setup to produce a precise heuristic for the number of circles of prime curvature as well as the number of pairs of tangent circles of prime curvature less than $x$ in a given ACP.

%
%
\subsection{Predicting the prime number theorem for ACP's}\label{theory}

In order to compute the number of prime curvatures in an ACP as proposed in Conjecture~\ref{pconj}, we use the setup above paired with properties of the Moebius function to pick out primes in the orbit of $A$ (see (\ref{Lam})).  We use the asymptotic in \cite{oh} for the number $N_P(x)$ of curvatures less than $x$ in a given packing $P$:
\begin{thm}(Kontorovich, Oh): \label{oh}
Given a bounded Apollonian circle packing $P$, there exists a constant $c_P>0$ which depends on the packing, such that as $x\rightarrow\infty$,
$$N_P(x) \sim c_P\cdot x^{\delta},$$
where $\delta = 1.30568\dots$ is the Haussdorf dimension of the limit set of $A$ acting on hyperbolic space.
\end{thm}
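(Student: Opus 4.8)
This theorem is quoted here rather than proved, since it is due to Kontorovich and Oh; nonetheless the following is the route I would take. The first step is to pass from the Apollonian group $A \leq \mathrm{O}_F(\mathbb Z)$ to geometry: under the exceptional isomorphism between the identity component of $\mathrm{O}_F(\mathbb R)$ and the group of (possibly orientation-reversing) M\"obius transformations of $\widehat{\mathbb C} = \partial\mathbb H^3$, the group $A$ becomes a non-elementary geometrically finite Kleinian group $\Gamma$ whose limit set $\Lambda$ is exactly the residual set of the packing $P$, so that $\delta = \dim_H\Lambda$ is the critical exponent of $\Gamma$ and does not depend on $P$. In this dictionary every circle of $P$ is a $\Gamma$-translate $\gamma C_0$ of a single fixed circle $C_0$, and --- discarding a bounded number of exceptional circles, and using that each circle is the newly inscribed one of a unique Descartes quadruple (in which it necessarily has the largest curvature) --- $N_P(x)$ equals, up to a bounded additive error, the number of points $w$ of the linear orbit $\Gamma v_0$ on the null cone $\{F=0\}\subset\mathbb Z^4$ with $\max_j w_j < x$. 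Since the curvature of $\gamma C_0$ is comparable, up to multiplicative constants, to an exponential of a Busemann cocycle evaluated along the $\Gamma$-orbit of a base point $o\in\mathbb H^3$, this is a lattice-point count over an expanding family of regions $B_x$ in the light cone of $F$; and as that cone is a single orbit of a horospherical subgroup $H\leq G=\mathrm{O}_F(\mathbb R)^\circ$, it is equivalent to counting $\Gamma$-points along an expanding horosphere.

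The tool that makes such a count work when $\Gamma$ has infinite covolume is Patterson--Sullivan theory. Fix the Patterson--Sullivan density $\{\mu_o\}$ of dimension $\delta$ on $\Lambda$ and form the Bowen--Margulis--Sullivan measure $m^{\mathrm{BMS}}$ on the frame bundle $\Gamma\backslash G$; since $\Gamma$ is geometrically finite, Sullivan's theorem gives that $m^{\mathrm{BMS}}$ is \emph{finite}. One then invokes mixing of the frame (or geodesic) flow $\{a_t\}$ on $(\Gamma\backslash G, m^{\mathrm{BMS}})$ --- following Flaminio--Spatzier and Winter, with mixing of the geodesic flow on $\mathbb H^3$ already due to Roblin, and equivalently reflecting the Lax--Phillips spectral theory with base eigenvalue $\delta(2-\delta)$ --- and upgrades it, by a thickening argument of Eskin--McMullen type adapted to the infinite-volume setting (Roblin, Oh--Shah), to the equidistribution of the expanding horospherical pieces $\Gamma\backslash\Gamma H a_t$ with respect to the Burger--Roblin measure. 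Integrating this equidistribution statement against the test data attached to $C_0$ (its skinning measure) and unfolding the resulting integral yields $\#(\Gamma v_0\cap B_x)\sim c_P\, x^\delta$ with
\[
c_P \;=\; \frac{\ell_P}{\delta\,\lvert m^{\mathrm{BMS}}\rvert},
\]
where $\ell_P>0$ is an explicit integral of the Patterson--Sullivan density against the skinning measure of $C_0$; tracing back the comparisons gives $N_P(x)\sim c_P x^\delta$.

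The chief obstacle is that $\Gamma\backslash\mathbb H^3$ has \emph{infinite volume} and $\Gamma$ contains \emph{parabolic} elements --- every point of tangency of two circles of $P$ is a parabolic fixed point --- so neither the classical lattice counting of Duke--Rudnick--Sarnak and Eskin--McMullen nor a naive horocycle-equidistribution argument applies directly. Three points require care: (a) one needs mixing of the BMS frame flow together with adequate control of its behaviour over the cuspidal part of $\Gamma\backslash G$; (b) the equidistribution of the expanding horospherical pieces must be established \emph{uniformly} enough to absorb the non-compact cuspidal directions, since the relevant test functions are not compactly supported on $\Gamma\backslash G$; and (c) one must check that the boundary of each region $B_x$ --- equivalently, the boundary of the sector in the cone cut out by the curvature functional --- is null for the Patterson--Sullivan/BMS measure, so that the equidistribution limit may be applied to its indicator. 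Once (a)--(c) are in hand the asymptotic and the positivity of $c_P$ follow, and upgrading the argument to a quantitative form with a power-saving error term --- which is what feeds the affine sieve of \cite{BoGaSa} used in Section~\ref{primeACP} --- is the further technical heart of the Kontorovich--Oh work.
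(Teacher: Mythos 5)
The paper gives no proof of this statement: it is imported verbatim from Kontorovich--Oh \cite{oh} and used as a black box, so there is no internal argument to compare against. Your sketch is a faithful outline of the actual proof in the cited reference (Patterson--Sullivan density, finiteness of the Bowen--Margulis--Sullivan measure for the geometrically finite Apollonian group, mixing, and equidistribution of expanding closed horospheres against the Burger--Roblin measure), and correctly flags the infinite-covolume and parabolic difficulties; the only quibble is that the level of distribution feeding the sieve in \cite{BoGaSa} comes from spectral gaps modulo $q$ rather than from an archimedean power-saving in this count, but that is peripheral to the statement at hand.
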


For the purpose of our computations, we will need a slightly stronger statement of Theorem~\ref{oh}, as we will sum over each coordinate of the points in the orbit of $A$ separately.  Namely, each circle in the packing is uniquely represented in the orbit $\mathcal O$ as a maximal coordinate of a vector $\mathbf v$ in $\mathbb Z^4$.  We would like to know how many circles there are of curvature less than $x$ that are represented in this way in the $i$th coordinate of a vector in the orbit.  We denote this by $N^{(i)}_P(x)$:
\begin{equation}\label{coordinate}
N^{(i)}_P(x)=\sum_{\stackrel{\mathbf v\in\mathcal O}{v_i^{\ast}\leq x}} 1,
\end{equation}
where $v_i^{\ast}$ denotes the $i$th coordinate of $\mathbf v\in \mathbb Z^4$ which is also a maximal coordinate of $\mathbf v$\footnote[3]{It is possible that there is more than one $i$ for which the $i$th coordinate is maximal}.  To this end we have
\begin{lemma}\label{sullivan}
Let $N^{(i)}_P(x)$ and $N_P(x)$ be as above.  Then
\begin{equation}
N^{(1)}_P(x)\sim N^{(2)}_P(x)\sim N^{(3)}_P(x)\sim N^{(4)}_P(x)\sim \frac{N_P(x)}{4}
\end{equation}
as $x$ approaches infinity.
\end{lemma}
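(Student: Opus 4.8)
The plan is to recognise $N^{(i)}_P(x)$ as a count of circles of a prescribed \emph{colour}. First I would fix the canonical $4$-colouring of the tangency graph of $P$: give the four root circles the colours $1,2,3,4$ according to their positions in the root quadruple $\mathbf v$, and give every other circle the colour of the coordinate in which it is first produced — equivalently, the colour absent from the three circles into whose curvilinear interstice it is inscribed. Because each $S_i$ fixes every coordinate but the $i$th, and the circle newly inscribed in an interstice always has strictly the largest curvature in its Descartes quadruple, a circle is represented as a maximal $i$th coordinate of a vector of $\mathcal O$ precisely when it has colour $i$, with only $O(1)$ exceptions coming from the root configuration (the small root circles are never maximal). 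Hence $N^{(i)}_P(x)$ counts the colour-$i$ circles of curvature $\le x$; since every Descartes quadruple carries all four colours once, $\sum_{i=1}^4 N^{(i)}_P(x)=N_P(x)+O(1)$, and it remains only to show the four colour classes share the same leading term.

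Next I would exploit the self-similar, $S_4$-equivariant structure of the colouring. Writing the four root interstices as $\Delta_1,\dots,\Delta_4$, the one bounded by the circles of colours $\{1,2,3,4\}\setminus\{k\}$ has ``type'' $k$, so the four root interstices realise all four types; inside each $\Delta_k$ lies a bounded packing $Q_k$ whose colouring restricts that of $P$, and a type-$k$ interstice has its three child interstices of the three types $\neq k$. Thus $N^{(i)}_P(x)=G_i(x)+O(1)$, where $G_i(x)$ counts the type-$i$ interstices whose inscribed circle has curvature $\le x$, and the whole recursion is equivariant under $S_4$ permuting colours, exactly as $F$ and the generators $S_i$ are. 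I would then invoke the counting method behind Theorem~\ref{oh} in its equidistribution form: the vectors of $\mathcal O$, as they grow inside the cone $C=\{F=0\}$, equidistribute with respect to the conformal (Patterson--Sullivan) density attached to the Apollonian group $A$, and since $A$ and $F$ are normalised by the coordinate permutations this density may be taken $S_4$-invariant. The four regions $\{\mathbf v\in C:\, v_i=\max_j v_j\}$ are permuted by $S_4$, are pairwise disjoint off a set of density zero, and cover $C$; so each carries a quarter of the mass, and feeding this into the asymptotic orbit count and undoing the reformulation of the first paragraph yields $N^{(i)}_P(x)\sim N_P(x)/4$ for each $i$.

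The hard part will be the step asserting that the limiting measure controlling the orbit count is genuinely $S_4$-invariant and insensitive to the (highly asymmetric) sizes of the four root interstices; a purely formal symmetry argument does not suffice, because the coordinate permutations carry the integral orbit $\mathcal O=\mathbf v\,A$ onto the $4!/|H|$ distinct orbits attached to the relabellings of the root ($H\le S_4$ the stabiliser of $\mathbf v$), rather than preserving $\mathcal O$ itself — for the \emph{Bugeye} packing, the symmetry of $\mathbf v=(-1,2,2,3)$ only gives $N^{(2)}_P(x)=N^{(3)}_P(x)$ on the nose. What rescues the argument is precisely the ``insensitivity to initial data'' built into the Kontorovich--Oh/Sullivan machinery (mixing together with the spectral gap for $A$ from \cite{BoGaSa}), which forces the leading constant of each colour class to depend only on the intrinsic conformal density and hence to be common to all four. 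An equivalent route, if a self-contained proof were wanted, is to introduce the vector-valued transfer operator recording the four interstice types: its $S_4$-symmetry makes the leading eigenvalue (producing the exponent $\delta$) have a fully symmetric eigenvector, so the four residues at $s=\delta$ of the colour-refined Dirichlet series $\sum_i\sum_{\chi(C)=i}a(C)^{-s}$ coincide, and the lemma follows by a Tauberian theorem.
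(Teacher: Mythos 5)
Your argument is essentially the paper's: both rest on the observation that the coordinate permutations normalize the Apollonian group $A$, so that the extension $L=(G,A)$ is finite over $A$ and the Patterson--Sullivan measure driving the Kontorovich--Oh count is invariant under the permutation group $G$, whence the four coordinate-maximal regions of the cone carry equal asymptotic mass. If anything you are more explicit than the paper, whose proof does not address the subtlety you flag in your final paragraph (that a coordinate permutation carries the orbit $\mathcal O$ to a relabelled orbit rather than preserving it) and instead simply asserts the $G$-invariance of the measure and appeals to the proof in \cite{oh}.
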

\begin{proof}
The computation in \cite{oh} of the main term in the asymptotics in Theorem~\ref{oh} relies on the Patterson-Sullivan measure on the limit set of the Apollonian group $A$.  In order to prove Lemma~\ref{sullivan}, we show that this measure is invariant under transformations on the coordinates of a vector $\mathbf v$ in an orbit $\mathcal O$ of $A$.

To this end, let $G$ be the group of permutations of the coordinates $v_1, \dots, v_4$ of a vector $\mathbf v\in \mathcal O$.  The group $G$ is finite and its elements can be realized as $4$ by $4$ integer matrices.  For example, the matrix
\begin{equation*}
M=\left(
\begin{array}{cccc}
0&1&0&0\\
1&0&0&0\\
0&0&1&0\\
0&0&0&1\\
\end{array}
\right)
\end{equation*}
switches the first and second coordinates by left action on $\mathbf v^{\textrm{T}}$.  Let
$$L=(G,A)$$
be the group of $4$ by $4$ matrices generated by the Apollonian group $A$ together with $G$, and note that each element of $G$ normalizes $A$.  For example, if $M$ is as above, we have $M^{-1}S_1M = S_2$, where $S_1$ and $S_2$ are as in (\ref{gens}).  Similarly, any element of $G$ switching the $i$th and $j$th coordinate of $\mathbf v$ conjugates $S_i$ to $S_j$ in this way.  Thus $L/A$ is finite, and so $L$ is a finite extension of $A$.  In particular, this implies that the Patterson-Sullivan measure for $L$ is the same as for $A$.  Since $L$ is precisely an extension of $A$ by the permutations of the coordinates of $\mathbf v$, we have that the Patterson-Sullivan measure is invariant under $G$.  Together with Theorem~\ref{oh} and its proof in \cite{oh}, this proves the lemma. 
\end{proof}
Since we are interested in counting the points in $\mathcal O$ for which $v_i^{\ast}$ is prime, we sum over points for which $v_i^{\ast}$ is $0$ modulo some square-free $d$.  It is convenient to count primes in the orbit of $A$ with a logarithmic weight.  To this end, we consider the function
\[
                \Lambda(n)= \left\{ \begin{array}{ll}
                			\log p& \mbox{if $n = p^l$} \\
                			0 & \mbox{otherwise} \\
                			  \end{array}
                		 \right. \\
                \]
for which it is well known that
\begin{equation}\label{Lam}
\Lambda(n) = -\sum_{d|n}{\mu (d) \log d},
\end{equation}
where $\mu(d)$ is the Moebius function.  Using this we write down a concrete expression for the number of prime curvatures less than $x$ in a packing $P$ counted with a logarithmic weight:
\begin{lemma}\label{primtm}
Let $v_i^{\ast}$ be the $i$th coordinate of a vector $\mathbf v$ in $\mathcal O$ such that $v_i^{\ast}$ is the maximal coordinate of $\mathbf v$, and let $\psi_P(x)$ be as before.  Then
\begin{equation}\label{lambdasum1}
\psi_P(x)=-\sum_{1\leq i\leq 4}\sum_{\stackrel{\mathbf v\in \mathcal O}{v_i^{\ast}\leq x}}\Lambda(v_i^{\ast}) + \textrm{O}(x).
\end{equation}
\end{lemma}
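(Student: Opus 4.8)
The plan is to rewrite $\psi_P(x)$ — which is organized by the \emph{value} of a prime curvature — as a sum over the orbit $\mathcal O$ organized instead by \emph{which} coordinate of a vector $\mathbf v\in\mathcal O$ carries that curvature, and then to unfold $\Lambda$ using the Möbius identity (\ref{Lam}). The two ingredients are the tree description of $P$ in terms of the Apollonian group from \cite{Apollo} and a pointwise bound on the multiplicities $a_n$.

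First I would recall from \cite{Apollo} that, apart from the four circles of the initial configuration, the circles of $P$ are in bijection with the vectors of $\mathcal O$ other than the root quadruple: a vector $\mathbf v\in\mathcal O$ is reached from the root by a unique reduced word $S_{i_k}\cdots S_{i_1}$ in $A$ (reduced since each $S_i^2=1$), the last letter $S_{i_k}$ records the circle ``born'' at that step, and one checks from the Descartes relation that applying a non-backtracking generator to a Descartes quadruple produces a circle strictly larger than the other three, so that $v_{i_k}$ is the maximal coordinate of $\mathbf v$. Hence $v_{i_k}=v_i^{\ast}$, each circle of curvature $\le x$ is counted exactly once as we range over the four coordinates, and
\[
\sum_{1\le i\le 4}\ \sum_{\substack{\mathbf v\in\mathcal O\\ v_i^{\ast}\le x}}\Lambda(v_i^{\ast})\ =\ \sum_{n\le x}a_n\,\Lambda(n)\ +\ E(x),
\]
where $a_n=\#\{C\subset P:\,a(C)=n\}$ and $E(x)$ absorbs two defects: the $O(1)$ circles near the root at which the bijection (and the property ``new coordinate $=$ maximal coordinate'') fails, and the vectors $\mathbf v$ whose maximal coordinate is attained at two indices, so that on the left a coordinate other than the newly created one is also counted.

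Next I would dispose of the error. If a Descartes quadruple has two coordinates equal to its \emph{maximal} value $v_j$, the Descartes relation forces $(v_\ell-v_m)^2=4v_j(v_\ell+v_m)$ for the remaining two curvatures, and a one line inequality shows this is incompatible with $v_j$ being maximal unless some curvature is nonpositive, i.e.\ unless the bounding circle occurs; for a fixed packing this leaves only finitely many quadruples, so $E(x)=O(1)$. It then remains to compare $\sum_{n\le x}a_n\Lambda(n)$ with $\psi_P(x)=\sum_{p\le x}a_p\log p$: their difference is the proper prime power contribution $\sum_{\ell\ge 2}\sum_{p^{\ell}\le x}a_{p^{\ell}}\log p$, and since there are $O(\sqrt x)$ proper prime powers up to $x$ and $a_n=O_\epsilon(n^\epsilon)$, this is $O(x^{1/2+\epsilon})=O(x)$. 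Combining, $\sum_{1\le i\le 4}\sum_{v_i^{\ast}\le x}\Lambda(v_i^{\ast})=\psi_P(x)+O(x)$, and substituting (\ref{Lam}) for $\Lambda$ puts the identity into the form (\ref{lambdasum1}).

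The main work is combinatorial rather than analytic: one has to make the circle/orbit-vector correspondence precise and, in particular, verify that the coordinate created by a non-backtracking generator really is the maximal one — which can fail at and immediately around the root, where curvatures are negative or zero and the stabilizer of the root quadruple in $A$ may even be nontrivial (already for the quadruple $(-1,2,2,3)$ of $P_B$, which is fixed by a generator of $A$) — and then check that these exceptional vectors and the ties together number only $O(1)$. Once this, and the pointwise bound $a_n=O_\epsilon(n^\epsilon)$ on the number of circles of a given curvature, are granted, the Möbius rearrangement and the prime power estimate are routine.
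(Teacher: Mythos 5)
Your combinatorial groundwork --- the bijection between circles and orbit vectors via reduced words, the check that a non-backtracking generator creates the strictly maximal coordinate, the treatment of ties via $(v_\ell-v_m)^2=4v_j(v_\ell+v_m)$ (which indeed forces a nonpositive entry, hence only finitely many tied quadruples), and the caveat about the root (indeed $S_4$ fixes $(-1,2,2,3)$) --- is correct and considerably more careful than the paper, which states the lemma with no proof at all and merely appends the remark that prime powers contribute negligibly because $N_P^{\Box}(x)=\textrm{O}(x)$. So your route is the paper's route in substance, with the combinatorics actually filled in. One small point: as printed, (\ref{lambdasum1}) carries a minus sign in front of $\sum\Lambda(v_i^{\ast})$, which cannot be literally correct since both sides are nonnegative; your identity with $+\sum\Lambda$ is the arithmetically right one, and the paper's sign is a typo propagated backwards from the substitution of (\ref{Lam}) in (\ref{sumprimes1}).

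The genuine gap is the multiplicity bound $a_n=O_\epsilon(n^\epsilon)$, which is false. By Theorem~\ref{oh}, $\sum_{n\le x}a_n\sim c_P x^{\delta}$ with $\delta=1.3056\ldots>1$, so the average of $a_n$ over $n\le x$ has order $x^{\delta-1}$, and $a_n$ must exceed $n^{\delta-1-\epsilon}$ for a positive proportion of $n$; no bound of the shape $n^{o(1)}$ can hold. Consequently your estimate $O(x^{1/2+\epsilon})$ for the proper-prime-power contribution does not go through as written. What is actually needed is $\sum_{\ell\ge2}\sum_{p^{\ell}\le x}a_{p^{\ell}}\log p=\textrm{O}(x)$ (or merely $o(x^{\delta})$ for the eventual application), which requires a pointwise bound of quality roughly $a_n\ll n^{1/2-\epsilon}$, or an on-average bound over perfect powers; the trivial area bound $a_n\ll n^2$ and the bound $a_n\le N_P(n)\ll n^{\delta}$ are both too weak. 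To be fair, the paper's own assertion $N_P^{\Box}(x)=\textrm{O}(x)$ is given with no justification and faces exactly the same difficulty, so you have not made the argument worse --- but you have replaced an unproved claim with a provably false one, and you should either invoke a genuine multiplicity bound or flag the prime-power estimate as the explicitly heuristic step that it is in the paper.
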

The sum in (\ref{lambdasum1}) is a count of all circles whose curvatures are {\it powers} of primes.  Including powers of primes in our count will not affect the final answer significantly.  Namely, let $N_P^{\Box}(x)$ be the number of circles in a packing $P$ whose curvatures are less than $x$ and perfect squares. Note that
$$N_P^{\Box}(x)=\textrm{O}(x).$$
This is insignificant compared to the count of all curvatures in Theorem~\ref{oh}, so the sum in (\ref{lambdasum1}) is the correct one to consider.  Denote by $D<N_P(x)$ be the level distribution from the analysis in \cite{BoGaSa} -- i.e. our moduli $d$ are taken to be less than $D$.  We combine Lemma~\ref{primtm}, Lemma~\ref{sullivan}, and the expression for $\Lambda(n)$ in (\ref{Lam}) to get 
\begin{eqnarray}\label{sumprimes1}
\psi_P(x)&=&-\sum_{1\leq i\leq 4}\sum_{\stackrel{\mathbf v\in \mathcal O}{v_i^{\ast}\leq x}}\sum_{d|v_1^{\ast}}\mu(d)\log d+\textrm{O}(x)\nonumber \\ &=&
-\sum_{1\leq i\leq 4} \sum_{\stackrel{\mathbf v\in\mathcal O}{v_i^{\ast}\leq x}}\sum_{d\leq D}\mu(d)\log d\hspace{-0.1in}\sum_{v_i^{\ast}\equiv 0\;(d)}\hspace{-0.1in}1 - \sum_{1\leq i\leq 4} \sum_{\stackrel{\mathbf v\in\mathcal O}{v_i^{\ast}\leq x}}\sum_{d> D}\mu(d)\log d\hspace{-0.1in}\sum_{v_i^{\ast}\equiv 0\;(d)}\hspace{-0.1in}1 \, +\textrm{O}(x)
\end{eqnarray}
Assuming that the Moebius function $\mu(d)$ above becomes random as $d$ grows, the sum over $d>D$ in (\ref{sumprimes1}) is negligible, and we omit it below.  We proceed by rewriting the sum over $d\leq D$ in (\ref{sumprimes1}) using the density function $\beta(d)$ in (\ref{betadef}).  Recall that the analysis in \cite{BoGaSa} and \cite{oh} gives us that
$$ \sum_{n \equiv 0\,(d)}a_n = \beta(d)\cdot c_P x^{\delta} + r(A,d)$$
where $r(A,d)$ is small on average.  In particular,
$$\sum_{d\leq D}r(A,d) = \textrm{O}(x^{\delta-\epsilon_0})$$
for some $\epsilon_0>0$.  Paired with the assumption that $\mu$ is random, this evaluation of the remainder term allows us to rewrite (\ref{sumprimes1}) as follows:
\begin{eqnarray}\label{sumprimes}
&&{-\sum_{1\leq i\leq 4} \frac{N_P^{i}(x)}{4}}\sum_{d\leq D}\beta_i(d)\mu(d)\log d+\textrm{O}(x^{\delta-\epsilon_0})\nonumber \\ &=&
-\frac{N_P(x)}{4}\sum_{1\leq i\leq 4}\sum_{d\leq D}\beta_i(d)\mu(d)\log d +\textrm{O}(x^{\delta-\epsilon_0})\\\nonumber
\end{eqnarray}
To compute the innermost sum in the final expression above, note that 
\begin{equation}\label{infinity}
\sum_{d\leq D}\beta_i(d)\mu(d)\log d= \sum_{d>0}\beta_i(d)\mu(d)\log d- \sum_{d> D}\beta_i(d)\mu(d)\log d.
\end{equation}
Assuming once again that the sum over $d>D$ is insignificant due to the conjectured randomness of the Moebius function, we have that the sum over $d\leq D$ in (\ref{infinity}) can be approximated by the sum over all $d$.  With this in mind, the following lemma yields the heuristic in Conjecture~\ref{pconj}.
\begin{lemma}\label{infinitesumcalc}
Let $\beta_i(d)$ be as before.  We have
$$\sum_{1\leq i\leq 4}\sum_{d>0}\beta_i(d)\mu(d)\log d = 4\cdot L(2,\chi_4)$$
where $L(2,\chi_4)= 0.91597\dots$ is the value of the Dirichlet $L$-function at $2$ with character
\begin{eqnarray*}
\chi_4(p) = 
\begin{cases}
1 & \text{if $p \equiv 1$ (mod 4)} \\
-1 & \text{if $p \equiv 3$ (mod 4)}
\end{cases}
\end{eqnarray*}
\end{lemma}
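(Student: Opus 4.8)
The plan is to evaluate each inner sum $\sum_{d>0}\mu(d)\beta_i(d)\log d$ separately and then add the four contributions. Since $\mu(d)\beta_i(d)$ is multiplicative and supported on squarefree $d$ by Lemma~\ref{betathm}(i), and since $\log d=\sum_{p\mid d}\log p$ on squarefree $d$, the natural object is the Euler product
\[
Z_i(s)=\sum_{d\geq 1}\frac{\mu(d)\beta_i(d)}{d^{s}}=\prod_{p}\bigl(1-\beta_i(p)p^{-s}\bigr),
\]
which converges for $\Re s>0$ because, by (\ref{bvalue}), $\beta_i(p)=\tfrac1p+O(p^{-2})$. Differentiating and letting $s\to0^{+}$ one has $\sum_{d>0}\mu(d)\beta_i(d)\log d=-\lim_{s\to0^{+}}Z_i'(s)$, the conditionally convergent left-hand side being read---consistently with the truncation $\sum_{d\leq D}$ in (\ref{sumprimes1})---as this regularized value. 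So the problem reduces to describing $Z_i(s)$ near $s=0$.

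First I would split off the prime $2$. By Lemma~\ref{betathm}(iii), for each coordinate $i$ we have $\beta_i(2)\in\{0,1\}$, the value $1$ occurring for exactly the two ``even'' coordinates and $0$ for the two ``odd'' ones, while by Lemma~\ref{betathm}(ii) the odd-prime densities reduce to the single function $\beta(p)$ of (\ref{bvalue}). Then I would divide out the expected pole: put $\Phi_i(s)=Z_i(s)\,\zeta(1+s)$, so that
\[
\Phi_i(s)=\frac{1-\beta_i(2)2^{-s}}{1-2^{-1-s}}\prod_{p\ \mathrm{odd}}\frac{1-\beta(p)p^{-s}}{1-p^{-1-s}}.
\]
Since $\beta(p)=\tfrac1p+O(p^{-2})$, each odd factor is $1+O(p^{-2-\Re s})$, so the product converges locally uniformly near $s=0$ and $\Phi_i$ is holomorphic there. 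Using $\zeta(1+s)^{-1}=s+O(s^{2})$ one gets $Z_i(s)=\Phi_i(0)\,s+O(s^{2})$, hence $\lim_{s\to0^{+}}Z_i'(s)=\Phi_i(0)$ and $\sum_{d>0}\mu(d)\beta_i(d)\log d=-\Phi_i(0)$.

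It then remains to evaluate $\Phi_i(0)$ factor by factor. The factor at $2$ equals $(1-\beta_i(2))/(1-\tfrac12)$, which is $2$ for an odd coordinate and $0$ for an even one. For odd $p$ the factor is $(1-\beta(p))/(1-\tfrac1p)$, and by (\ref{bvalue}) this equals $\tfrac{p^{2}}{p^{2}-1}=(1-p^{-2})^{-1}$ when $p\equiv1\,(4)$ and $\tfrac{p^{2}}{p^{2}+1}=(1+p^{-2})^{-1}$ when $p\equiv3\,(4)$---precisely the Euler factors of $L(2,\chi_{4})=\prod_{p\ \mathrm{odd}}(1-\chi_4(p)p^{-2})^{-1}$. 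Hence $\Phi_i(0)=2\,L(2,\chi_4)$ for the two odd coordinates and $\Phi_i(0)=0$ for the two even ones---the latter reflecting that an always-even coordinate carries no prime curvature other than $2$. Summing over $i$ gives $\sum_{1\leq i\leq 4}\sum_{d>0}\mu(d)\beta_i(d)\log d=-\bigl(2\cdot 2L(2,\chi_4)+2\cdot 0\bigr)$, i.e. $4\,L(2,\chi_4)$ up to the overall sign fixed by the conventions of (\ref{Lam})--(\ref{sumprimes}); tracking that sign yields the stated identity.

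I expect the only delicate point to be the very first reduction: the series is only conditionally convergent, so equating $\lim_{D\to\infty}\sum_{d\leq D}\mu(d)\beta_i(d)\log d$ with the regularized value $-\Phi_i(0)$ is an Abelian/Tauberian statement that relies on the non-vanishing of $\zeta(s)$ and $L(s,\chi_4)$ on the line $\Re s=1$ rather than being purely formal. Everything past that---the $p=2$ dichotomy and the recognition of the odd-prime factors as those of $L(2,\chi_4)$---is routine Euler-product bookkeeping; the one estimate to keep track of is $\beta(p)=\tfrac1p+O(p^{-2})$, which guarantees the uniform convergence of $\Phi_i$ near $s=0$.
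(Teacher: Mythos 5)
Your proposal follows essentially the same route as the paper: form the Euler product $\prod_p(1-\beta_i(p)p^{-s})$, extract a factor of $\zeta(1+s)^{-1}$ so that the zero at $s=0$ isolates the value of the remaining holomorphic product, and recognize that product at $s=0$ as $2L(2,\chi_4)$ for the two odd coordinates and $0$ for the two even ones. Your remarks on the conditional convergence and the overall sign (which the paper absorbs into the conventions of (\ref{Lam})--(\ref{sumprimes})) are consistent with, and slightly more careful than, the paper's formal differentiation at $s=0$.
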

\begin{proof}
We introduce a function 
\begin{equation*}
f(s) = \sum_d \beta_i(d) \mu(d) d^{-s},
\end{equation*}
and note that its derivative at $0$ is precisely what we want:
\begin{equation*}\label{Fprimezero}
f'(0) = -\sum_d \beta_i(d) \mu(d) \log d.
\end{equation*}
Since the functions $\beta$, $\mu$, and $d^s$ are all multiplicative, we may rewrite $f(s)$ as an Euler product and obtain
\begin{eqnarray*}
f(s) &=& \prod_p \left( 1 - \beta_i(p) p^{-s} \right)\\&=&
\prod_p (1 - p^{-s-1})\cdot \frac{1 - \beta_i(p) p^{-s}}{1 - p^{-s-1}}\\&=&
\zeta^{-1}(s+1) \cdot\prod_p \frac{1-\beta_i(p)p^{-s}}{1-p^{-s-1}}\\ &=&
\zeta^{-1}(s+1)\cdot H(s), \\
\end{eqnarray*}
where $H(s) = \prod_p  (1-\beta_i(p)p^{-s})({1-p^{-s-1}})^{-1}$ is holomorphic in $\Re (s) > 1/2$.
Differentiating, we obtain
$$f'(0) = -\zeta'(1)\zeta^{-2}(1) \cdot H(0) +\zeta^{-1}(1)\cdot  H'(0) = H(0)$$
since $-\zeta'(1)\zeta^{-2}(1) = 1$ and $\zeta^{-1}(1)=0$.  Thus it remains to compute
$$H(0) = \prod_p \frac{1-\beta_i(p)}{1-p^{-1}}.$$
Part (iii) of Lemma~\ref{betathm} says $\beta_i(2)=\beta_j(2) = 1$ for two coordinates $1\leq i,j\leq 4$.  For these two coordinates $1 - \beta_i(2) = 0$ and so $H(0) = 0.$  Otherwise $\beta_i(2) = 0$ and we have
\begin{eqnarray*}
H(0) &=& \frac{1}{1-\frac{1}{2}}\cdot \prod_{p \equiv 1\, (4)} \left( 1 - \frac{1}{p+1} \right) \frac{1}{1 - p^{-1}} \; \prod_{p \equiv 3 \, (4)} \left(1 - \frac{p+1}{p^2+1} \right)\frac{1}{1 - p^{-1}}\\ &=&
2 \cdot \prod_{p \equiv 1 \,(4)} \frac{p^2}{p^2 - 1}  \prod_{p \equiv 3 \,(4)}\frac{p^2}{p^2+1}\\ \\&=&
 2\cdot L(2,\chi_4).
  \label{productanswer}
\end{eqnarray*}
Thus the sum we wish to compute is $4\cdot L(2,\chi_4)$, as desired.
\end{proof}
Lemma~\ref{infinitesumcalc} implies that the contribution of the two of the coordinates that are even throughout the orbit to the sum in (\ref{sumprimes}) is $0$, and the contribution for the other two coordinates is
$$\frac{N_P(x)}{4}\cdot 4\cdot L(2,\chi_4) = N_P(x)\cdot L(2,\chi_4),$$
yielding the predicted result in Conjecture ~\ref{pconj}.
\begin{rmk}\label{katsremark}
It is well known that $\pi_P(x) \sim \frac{\psi_P(x)}{\log x}$ as $x \rightarrow \infty$.  Thus Conjecture ~\ref{pconj} can also be stated in terms of $\pi_P(x)$:
$$  \pi_P(x) \sim \frac{L(2,\chi_4)\cdot N_P(x)}{\log x}.$$
\end{rmk}

\begin{figure}[H]
\centering
\includegraphics[height = 72 mm]{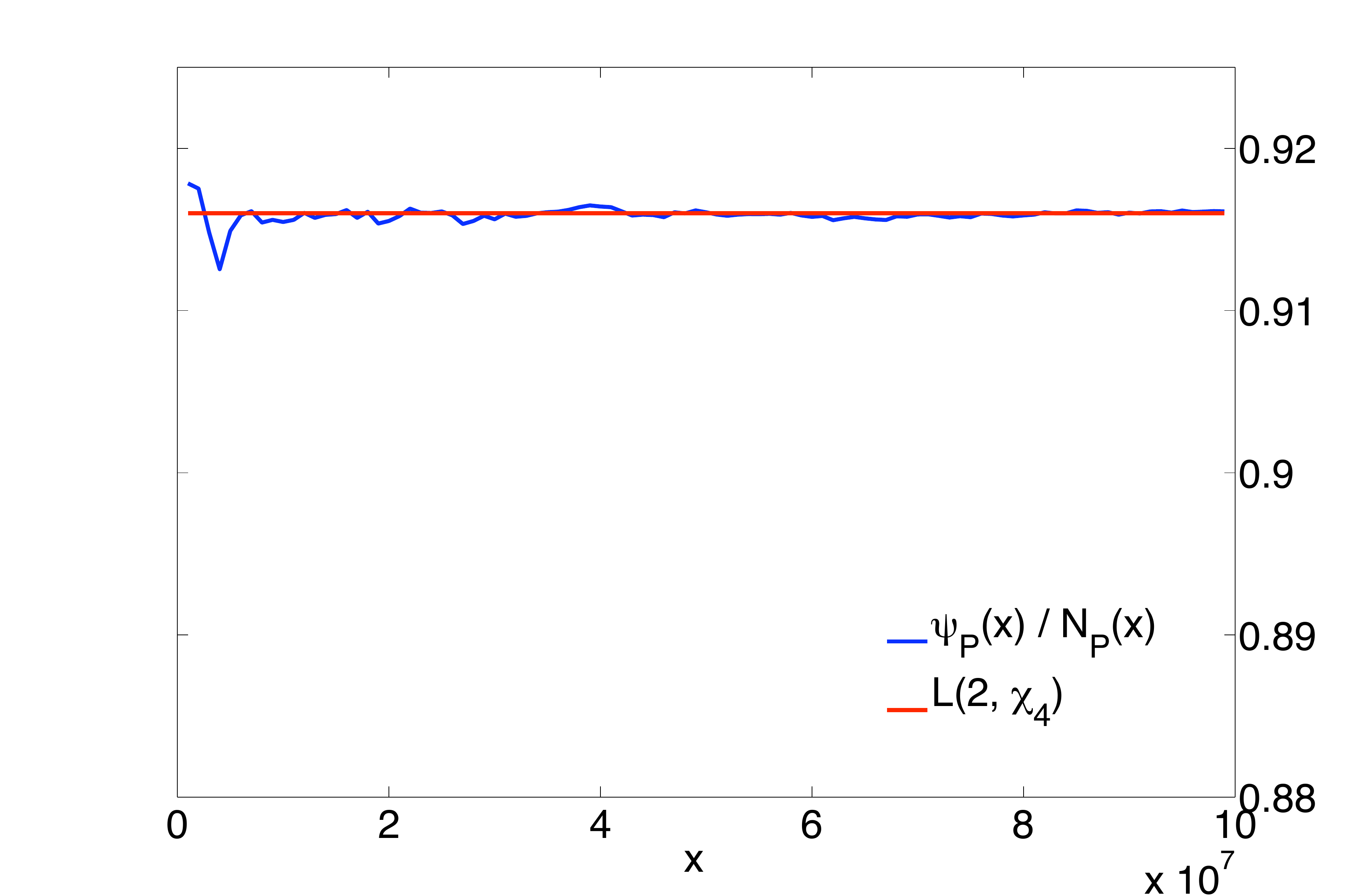}
\caption{Prime Number Heuristic for $P_C$}\label{pntfigure}
\end{figure}

\begin{figure}[H]
\centering
\includegraphics[height = 72 mm]{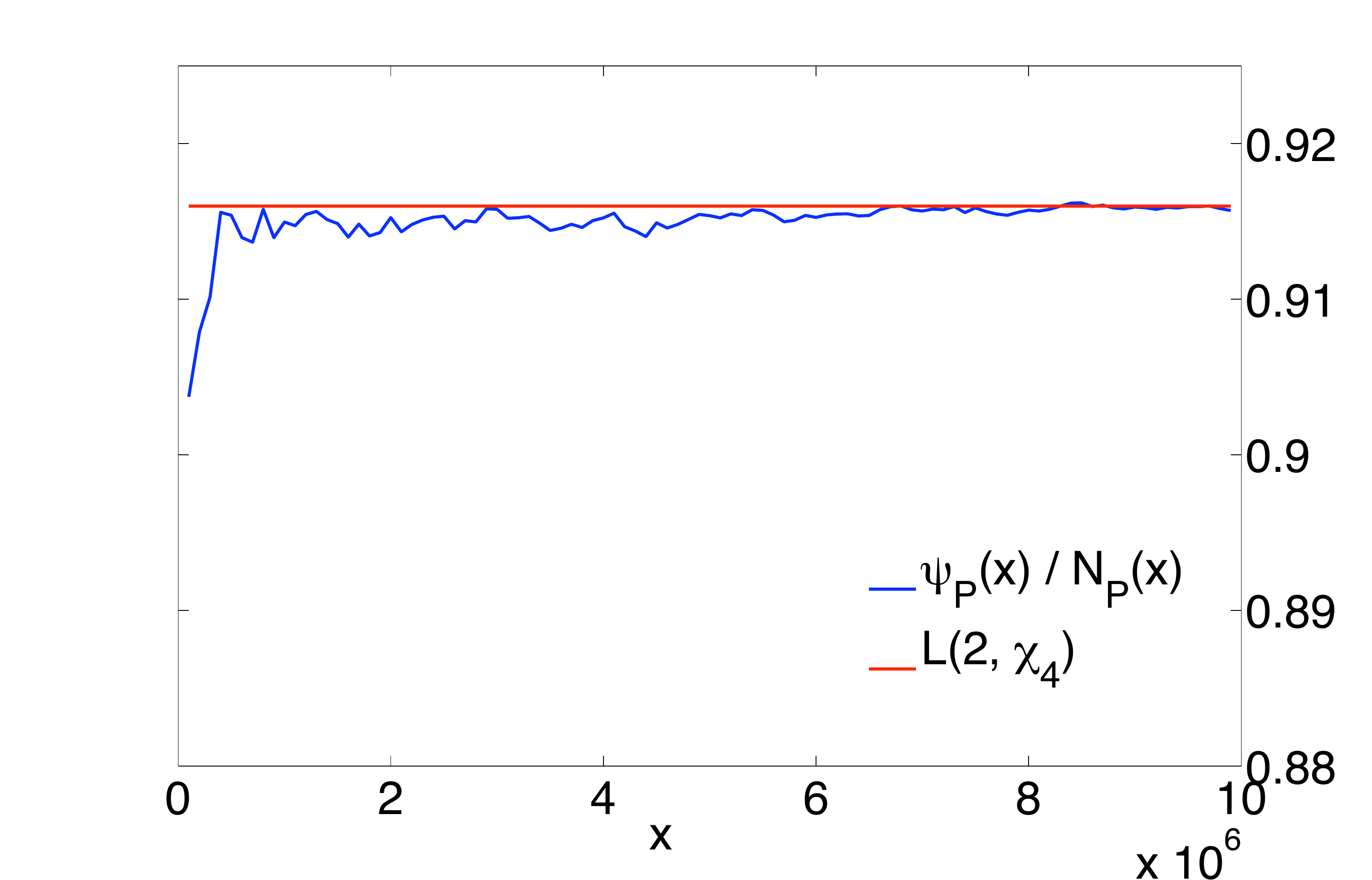}
\caption{Prime Number Heuristic for $P_B$}\label{pntfigureB}
\end{figure}
Since our computations rely on the multiplicativity inherent to the reduction mod $d$ of the Apollonian group $A$, our heuristic is independent of the chosen packing $P$ in which we count prime curvatures.  This is confirmed by our data: Fig.~\ref{pntfigure} and Fig.~\ref{pntfigureB} above show the graphs of
\begin{equation}\label{graphp}
y= \frac{\psi_{P}(x)}{N_{P}(x)}
\end{equation}
where $P=P_C$ and $P_B$ as in Section~\ref{intro} and $x\leq 10^8$.  In both cases, (\ref{graphp}) converges to $y=L(2,\chi_4)$ as predicted in Conjecture~\ref{pconj}.
%
%
%
%
%

\vspace{0.5in}

\subsection{Predicting a prime number theorem for kissing primes}\label{theory2}

In this section, we use the analysis in \cite{BoGaSa}, as well as the conjectured randomness of the Moebius function to arrive at the heuristic in Conjecture~\ref{tpconj} for the number of kissing primes, i.e. pairs of tangent circles both of prime curvature less than $x$.  With the same notation as in Section~\ref{theory}, we would now like to count the points in $\mathcal O$ for which $v_i^{\ast}$ and $v_j$ are prime for some $j\not=i$, so we sum over points for which either $v_i^{\ast}$ or $v_j$ is $0$ modulo some square-free $d$.  To do this we need the total number of pairs of mutually tangent circles of curvature less than $x$ in a packing $P$.  If $N_P(x)$ is the number of circles of curvature up to $x$ as specified by Theorem~\ref{oh}, it is not difficult to see that
\begin{equation}
\#\{{\mbox{pairs of mutually tangent circles of curvature }}a\leq x\}=3\cdot N_P(x)
\end{equation}
since each circle of curvature $a$ in the packing is tangent to a distinct triple of circles of curvature $\leq a$.  This can be seen via a simple proof by induction.  We again employ the function $\Lambda(n)$ in order to write down a concrete expression for the number of kissing primes less than $x$ in a packing $P$.
\begin{lemma}\label{twintm}
Let $v_i^{\ast}$ and $v_j$ be two distinct coordinates of a vector $\mathbf v$ in $\mathcal O$, where $v_i^{\ast}$ denotes the maximum coordinate of $\mathbf v$, and let $\psi_P^{(2)}(x)$ be as before.  Then
\begin{equation}\label{lambdasumm}
\psi_P^{(2)}(x)=\sum_{\stackrel{\mathbf v\in \mathcal O}{v_i^{\ast}\leq x}}\sum_{j\not=i}\Lambda(v_i^{\ast})\Lambda(v_j) + \textrm{O}(x).
\end{equation}
\end{lemma}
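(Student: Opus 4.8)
The plan is to follow the proof of Lemma~\ref{primtm}, with the single logarithmic weight replaced by the product weight $\Lambda(v_i^{\ast})\Lambda(v_j)$, and to push everything through the correspondence between circles of $P$ and vectors of the orbit recalled in Section~\ref{theory}. Recall that each circle $C$ of $P$, apart from the finitely many in the root quadruple, is represented by a unique $\mathbf v\in\mathcal O$ in which $a(C)$ is the (tie-broken) maximal coordinate $v_i^{\ast}$, and that the three circles of curvature $\le a(C)$ tangent to $C$ are exactly the circles whose curvatures are the other three coordinates $v_j$, $j\neq i$, of that vector --- this is the induction already used to obtain $\#\{\text{pairs of tangent circles of curvature }\le x\}=3N_P(x)$. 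Hence, up to an $O(\log^2 x)$ error from the $O(1)$ root-quadruple circles,
\[
\sum_{\mathbf v\in\mathcal O,\ v_i^{\ast}\le x}\ \sum_{j\neq i}\Lambda(v_i^{\ast})\Lambda(v_j)
\;=\;\sum_{\substack{(C,C')\ \text{tangent}\\ a(C')\le a(C)\le x}}\Lambda\bigl(a(C)\bigr)\,\Lambda\bigl(a(C')\bigr)\;+\;O(\log^2 x),
\]
where a negative or zero coordinate (such as the bounding circle) contributes nothing since $\Lambda$ vanishes there.

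The next step is to compare the right-hand side with $\psi_P^{(2)}(x)$. Since $\Lambda(p)=\log p$ for primes $p$, the terms in which $a(C)$ and $a(C')$ are both prime contribute $\sum\log a(C)\log a(C')$ over ordered tangent pairs with $a(C')\le a(C)\le x$; as $a(C')\le x$ is then automatic and every unordered pair of tangent circles of distinct prime curvature is hit exactly once (with the larger circle as $C$), this equals $\psi_P^{(2)}(x)$ up to the contribution of tangent pairs of equal curvature, which the ``$a(C')\le a(C)$'' convention and the tie-break for $v_i^{\ast}$ may count with the wrong multiplicity. It therefore remains to bound two leftover contributions and show they are of lower order than the main term $N_P(x)\sim c_P x^{\delta}$ (i.e. $O(x)$ in the notation of the statement): (a) terms in which $v_i^{\ast}$ or $v_j$ is a proper prime power $p^{l}$ with $l\ge2$; and (b) the equal-curvature tangent pairs.

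Contribution (b) is harmless: there are $o(N_P(x))$ pairs of tangent circles of equal curvature $\le x$, each is hit with bounded multiplicity, and each term is $O(\log^2 x)$, so (b) is $o(x^{\delta})$. The real work is (a). Bounding $\Lambda$ by $\log x$ on arguments $\le x$, contribution (a) is at most $O(\log^2 x)$ times the number of vectors $\mathbf v\in\mathcal O$ with $v_i^{\ast}\le x$ having a proper prime power among their four coordinates; splitting by the exponent $l\ge2$, such a coordinate lies among the $O(x^{1/2})$ perfect powers $\le x$. Showing that the number of circles (and of Descartes quadruples) of $P$ whose curvature list meets this sparse set is of lower order than $x$ is exactly the content of the bound $N_P^{\Box}(x)=O(x)$ invoked after Lemma~\ref{primtm}, and relies on the distribution of curvatures behind Theorem~\ref{oh} (such as $\sum_{n\le y}a_n\ll y^{\delta}$) together with the level-of-distribution estimates of \cite{BoGaSa}. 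I expect this step --- verifying that the prime-power contribution is genuinely of lower order than $x^{\delta}$ --- to be the only delicate part; all the rest is the bookkeeping of the correspondence between circles and orbit vectors set up above.
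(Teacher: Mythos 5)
Your argument is correct and follows exactly the route the paper intends: the paper gives no formal proof of this lemma, only the one-line remark afterward that the sum counts tangent pairs of prime-power curvature and that, ``by a similar argument to that in Section~2.1,'' the prime-power terms (controlled via $N_P^{\Box}(x)=\mathrm{O}(x)$) are negligible. Your write-up simply makes explicit the circle-to-orbit-vector bookkeeping and the error estimates that the paper takes for granted, and correctly flags the bound on the prime-power contribution as the only point requiring real justification.
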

Again, the sum in (\ref{lambdasumm}) is a count of all mutually tangent pairs of circles whose curvatures are {\it powers} of primes, but by a similar argument to that in Section~\ref{theory} we have that including powers of primes in our count does not affect the final answer significantly.   Note that in order to evaluate (\ref{lambdasumm}) we introduce in (\ref{referg}) a function which counts points in $\mathcal O$ for which {\it two} of the coordinates are $0$ modulo $p$.  Denote by $D<x$ be the level distribution from the analysis in \cite{BoGaSa} -- i.e. the moduli $d>1$ in the computations below may be taken to be less than $D$.  We rewrite the expression in (\ref{lambdasumm}) using (\ref{Lam}) and get
\begin{eqnarray}\label{sum11}
\qquad&&\sum_{\stackrel{1\leq i,j\leq 4}{i\not=j}}\sum_{\stackrel{\mathbf v\in \mathcal O}{v_i^{\ast}\leq x}}\Biggl(\sum_{d_i|v_i^{\ast}}\mu(d_i)\log d_i\sum_{d_j|v_j}\mu(d_j)\log d_j\Biggr)\\&=&
\sum_{\stackrel{1\leq i,j\leq 4}{i\not=j}}\sum_{\stackrel{\mathbf v\in\mathcal O}{v_i^{\ast}\leq x}}\left(\Sigma^{-}+\Sigma^{+}\right)+\textrm{O}(x)
\end{eqnarray}
where
$$\Sigma^{-} = \Bigl(\sum_{d_i\leq D}\mu(d_i)\log d_i \sum_{v_i^{\ast}\equiv 0\;(d_i)}1\Bigr)\Bigl(\sum_{d_j\leq D}\mu(d_j)\log d_j \sum_{v_j\equiv 0\;(d_j)}1\Bigr),$$
and
$$\Sigma^{+} = \Bigl(\sum_{d_i>D}\mu(d_i)\log d_i \sum_{v_i^{\ast}\equiv 0\;(d_i)}1\Bigr)\Bigl(\sum_{d_j>D}\mu(d_j)\log d_j \sum_{v_j\equiv 0\;(d_j)}1\Bigr).$$
As in Section~\ref{theory}, we omit $\Sigma^{+}$ in (\ref{sum11}) under the assumption that $\mu$ behaves randomly for large values of $d$.  Along with the results about the remainder term in the sieve in \cite{BoGaSa}, the expression in (\ref{sum11}) becomes
\begin{equation}\label{sum1}
{N_P(x)}\sum_{\stackrel{1\leq i,j\leq 4}{i\not=j}}\sum_{[d_i,d_j]\leq D'}\beta_i\left(\frac{d_i}{(d_i,d_j)}\right)\beta_j\left(\frac{d_j}{(d_i,d_j)}\right)g((d_i,d_j))\mu(d_i)\mu(d_j)\log d_i\log d_j+\textrm{O}(x^{\delta-\epsilon_0}),
\end{equation}
where $\beta_i(d)$ is as before, $[d_i,d_j]$ is the least common multiple of $d_i$ and $d_j$, and $(d_i,d_j)$ is their gcd.  The function $g$ above is the ratio
\begin{equation}\label{referg}
g((d_i,d_j)) = \frac{\#\{\mathbf v\in {\mathcal O_{(d_i,d_j)}} \, | \, v_i\equiv 0 \, ((d_i,d_j)) \mbox{ and }v_j\equiv 0 \, ((d_i,d_j))\}}{\#\{\mathbf v\in{\mathcal O_{(d_i,d_j)}}\}}
\end{equation}
where $(d_i,d_j)$ is square-free in our case.  Note that $g(d)$ is multiplicative outside of the primes $2$ and $3$ by Theorem~\ref{padicorbit}, so
$$g(d)=\prod_{p|d}p$$
and we need only to compute $g(p)$ for $p$ prime in evaluating the sum above.
\begin{lemma}\label{gthm}
Let $g(p)$ be as before where $p$ is a prime.  Then
\begin{itemize}
\item[(i)] $g(2)=\left\{ \begin{array}{ll}
                			1& \mbox{if both $v_i$ and $v_j$ are even} \\
                			0& \mbox{if at least one of $v_i$ or $v_j$ is odd} \\
                			  \end{array}
                		 \right.. \\
		 $
\item[(ii)] $
 g(p)= \left\{ \begin{array}{ll}
                			\frac{1}{(p+1)^2}& \mbox{for $p \equiv 1$ mod $4$} \\
                			\frac{1}{p^2+1} & \mbox{for $p\equiv 3$ mod $4$} \\
                			  \end{array}
                		 \right.. \\$
\end{itemize}
\end{lemma}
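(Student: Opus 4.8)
The plan is to follow the same three-way split used in the proof of Lemma~\ref{betathm}: dispose of $p=2$ using the triviality of $A$ modulo $2$, handle all $p>3$ using Theorem~\ref{padicorbit} together with the finite-field point counts already recorded in (\ref{orbit}), and treat $p=3$ by direct inspection of the two orbit graphs in Fig.~\ref{mod31} and Fig.~\ref{mod32}.

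For part (i), I would recall that the Apollonian group reduces to the identity modulo $2$, so $\mathcal O_2$ is the single vector obtained by reducing the root quadruple; as noted in the proof of Lemma~\ref{betathm}, this vector has exactly two even and two odd coordinates. Hence $v_i\equiv v_j\equiv 0\pmod 2$ holds on all of $\mathcal O_2$ when $i,j$ are the two even coordinates, giving $g(2)=1$, and fails identically otherwise, giving $g(2)=0$.

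For part (ii) with $p>3$, I would invoke Theorem~\ref{padicorbit}, which identifies $\mathcal O_p$ with the cone $C_p$, so that the denominator of $g(p)$ is exactly the count in (\ref{orbit}), i.e. $(p+1)^2(p-1)$ for $p\equiv 1\pmod 4$ and $(p-1)(p^2+1)$ for $p\equiv 3\pmod 4$. The key observation for the numerator is that specializing any two of the four variables of the Descartes form to $0$ collapses $F$ to a perfect square: by symmetry of $F$ in its arguments, $F(v_a,v_b,0,0)=2(v_a^2+v_b^2)-(v_a+v_b)^2=(v_a-v_b)^2$ regardless of which pair of coordinates was set to zero. The nonzero solutions are therefore exactly the vectors with $v_a=v_b\neq 0$, of which there are $p-1$. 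Dividing $p-1$ by the two denominators yields $1/(p+1)^2$ and $1/(p^2+1)$, matching (ii).

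Finally, since $p=3$ lies in the class $p\equiv 3\pmod 4$ the target value is $1/(3^2+1)=1/10$, but Theorem~\ref{padicorbit} no longer identifies $\mathcal O_3$ with $C_3$, so I would read off the answer from the two explicit $10$-element orbits pictured in Fig.~\ref{mod31} and Fig.~\ref{mod32}: in each, exactly one vector has a given pair of coordinates both equal to $0$, so $g(3)=1/10$. The one point that genuinely needs care here is verifying that this count is independent of the choice of the pair $\{i,j\}$ of coordinates, exactly as in the single-coordinate statement of Lemma~\ref{betathm}; for $p>3$ it is immediate from the full symmetry of $F$ and of $C_p$ under permutations of the coordinates, and for $p=3$ it is a finite check on the two orbit graphs.
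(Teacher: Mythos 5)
Your proposal is correct and follows essentially the same route as the paper's proof: part (i) from the triviality of $A$ modulo $2$ and the even/odd structure of the root quadruple, part (ii) for $p>3$ by identifying $\mathcal O_p$ with $C_p$ and dividing the $p-1$ nontrivial solutions of the degenerate binary form by the counts in (\ref{orbit}), and $p=3$ by inspection of the two orbit graphs. The only cosmetic difference is that you verify the numerator count $p-1$ directly by observing $F(v_a,v_b,0,0)=(v_a-v_b)^2$, where the paper cites the general result on degenerate binary quadratic forms from Cassels; your explicit symmetry remark about the choice of the pair $\{i,j\}$ is a welcome extra detail.
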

\begin{proof}
To prove (ii), we note that Theorem~\ref{padicorbit} implies that the numerator of $g(p)$ is
$$\#\{\mathbf v\in {\mathcal O_d} \, | \, v_1\equiv 0 \, (d) \mbox{ and }v_2\equiv 0 \, (d)\} =  \#\{(v_1,v_2)\in\mathbb F_p^2-\{{\bf 0}\}\, | \, F(v_1,v_2,0,0)=0\}$$
for $p>3$.  Thus it is the number of non-trivial solutions to a binary quadratic form with determinant $0$ (the Descartes form in (\ref{descartes}) with two of the $v_i$, $v_j$ set to $0$), and so
$$\#\{\mathbf v\in \mathcal O_p | v_1\equiv 0 \mbox{ mod }p \mbox{ and }v_2\equiv 0 \mbox{ mod }p\} = p-1$$
for all $p>3$ (see \cite{Cassels}, for example).  In the case $p=3$, we observe that in both of the possible orbits of $A$ mod $3$ in Figures~\ref{mod31} and (\ref{mod32}) we have $g(3)=\frac{1}{10}$ as desired.  Part (i) follows from the structure of the orbit $\mathcal O_2$ as observed in the proof of Lemma~\ref{betathm}.
\end{proof}
Denote by $\nu(d_i,d_j)$ the gcd of $d_i$ and $d_j$ (we write just $\nu$ from now on and keep in mind that $\nu$ depends on $d_i$ and $d_j$).  Note that
$$\beta\left(\frac{[d_i,d_j]}{\nu}\right)=\beta_i\left(\frac{d_i}{\nu}\right)\beta_j\left(\frac{d_j}{\nu}\right),$$
where $\frac{d_i}{\nu}$ and $\frac{d_j}{\nu}$ are relatively prime, so we rewrite the expression in (\ref{sum1}) below:
\begin{equation}\label{desired}
\frac{N_P(x)}{4}\sum_{\stackrel{1\leq i,j\leq 4}{i\not=j}}\sum_{[d_i,d_j]\leq D'}\beta_i\left(\frac{d_i}{\nu}\right)\beta_j\left(\frac{d_j}{\nu}\right)g(\nu)\mu(d_i)\mu(d_j)\log d_i\log d_j.
\end{equation}
To compute the sum in (\ref{desired}), we use a similar argument to that in Section~\ref{theory}.  We note that the inner sum in the expression above is equal to
\begin{equation}
\sum_{d_i>0}\sum_{d_j>0} \beta_i\left(\frac{d_i}{\nu}\right)\beta_j\left(\frac{d_j}{\nu}\right)g(\nu)\mu(d_i)\mu(d_j)\log d_i\log d_j - \sum_{[d_i,d_j]> D'}\beta_i\left(\frac{d_i}{\nu}\right)\beta_j\left(\frac{d_j}{\nu}\right)g(\nu)\mu(d_i)\mu(d_j)\log d_i\log d_j
\end{equation}
where we assume the sum over $[d_i,d_j]> D'$ is insignificant by the conjectured randomness of the Moebius function, and thus the sum over all $d_i$ and $d_j$ is a good heuristic for the sum in (\ref{desired}).  We compute the infinite sum in the following lemma and obtain the heuristic in Conjecture~\ref{tpconj}.
\begin{lemma}\label{infinitesum2calc}
Let $\beta_i(d)$, $\beta_j(d)$, and $g(\nu)$ be as before.  We have
$$\sum_{\stackrel{1\leq i,j\leq 4}{i\not=j}}\sum_{d_i>0}\sum_{d_j>0} \beta_i\left(\frac{d_i}{\nu}\right)\beta_j\left(\frac{d_j}{\nu}\right)g(\nu)\mu(d_i)\mu(d_j)\log d_i\log d_j = 8 \cdot L^2(2,\chi_4)\cdot \prod_{p\equiv 3\,(4)}\left(1-\frac{2}{p(p-1)^2}\right).$$
\end{lemma}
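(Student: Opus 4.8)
The plan is to run the two-variable analogue of the Euler-product computation used for Lemma~\ref{infinitesumcalc}. Fix a pair of distinct indices $i\neq j$ and set
$$f_{ij}(s,t)=\sum_{d_i>0}\ \sum_{d_j>0}\ \beta_i\!\left(\frac{d_i}{\nu}\right)\beta_j\!\left(\frac{d_j}{\nu}\right)g(\nu)\,\mu(d_i)\mu(d_j)\,d_i^{-s}\,d_j^{-t},\qquad \nu=(d_i,d_j).$$
Since $d_w^{-s}=e^{-s\log d_w}$,
$$\partial_s\partial_t\,f_{ij}(s,t)\Big|_{(0,0)}=\sum_{d_i>0}\ \sum_{d_j>0}\ \beta_i\!\left(\frac{d_i}{\nu}\right)\beta_j\!\left(\frac{d_j}{\nu}\right)g(\nu)\,\mu(d_i)\mu(d_j)\,\log d_i\,\log d_j,$$
which is precisely the $(i,j)$-term of the sum we must evaluate (only squarefree $d_i,d_j$ contribute, because of the factors $\mu(d_i)\mu(d_j)$). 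Thus it suffices to compute $\sum_{i\neq j}\partial_s\partial_t f_{ij}(0,0)$.

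First I would expand $f_{ij}$ as an Euler product. By Theorem~\ref{padicorbit} the functions $\beta_i,\beta_j,g$ are multiplicative on squarefree arguments, and $\mu$ and $d\mapsto d^{-s}$ are multiplicative everywhere; at the exceptional primes $p=2,3$ one uses the explicit values from Lemmas~\ref{betathm} and~\ref{gthm} (which at $p=3$ coincide with the generic formulas). Writing the $p$-exponents of $d_i,d_j$ as $(a,b)\in\{0,1\}^2$ and tracking the $p$-parts of $d_i/\nu,\,d_j/\nu,\,\nu$ (for instance, when $p\mid d_i$ and $p\mid d_j$ one gets $p\mid\nu$ and $p\nmid d_i/\nu,\,d_j/\nu$, contributing $g(p)p^{-s-t}$), the local factor at $p$ works out to
$$L_p(s,t)=1-\beta_i(p)p^{-s}-\beta_j(p)p^{-t}+g(p)p^{-s-t}.$$
Multiplying and dividing each factor by $(1-p^{-s-1})(1-p^{-t-1})$, exactly as in Lemma~\ref{infinitesumcalc}, yields
$$f_{ij}(s,t)=\zeta^{-1}(s+1)\,\zeta^{-1}(t+1)\,H_{ij}(s,t),\qquad
H_{ij}(s,t)=\prod_p\frac{1-\beta_i(p)p^{-s}-\beta_j(p)p^{-t}+g(p)p^{-s-t}}{(1-p^{-s-1})(1-p^{-t-1})}.$$
Using $\beta(p)=p^{-1}+O(p^{-2})$ and $g(p)=O(p^{-2})$, the $p$-factor of $H_{ij}$ is $1+O(p^{-2})$ uniformly for $(s,t)$ near the origin, so the product converges locally uniformly and $H_{ij}$ is holomorphic in a neighborhood of $(0,0)$.

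Now $\zeta^{-1}(w+1)$ has a simple zero at $w=0$ with $\tfrac{d}{dw}\zeta^{-1}(w+1)\big|_{w=0}=1$ (the content of ``$-\zeta'(1)\zeta^{-2}(1)=1$, $\zeta^{-1}(1)=0$'' used in Lemma~\ref{infinitesumcalc}). Differentiating the factorization and setting $s=t=0$ therefore annihilates every term except the one in which both $\zeta^{-1}$ factors are differentiated, so
$$\partial_s\partial_t\,f_{ij}(0,0)=H_{ij}(0,0)=\prod_p\frac{1-\beta_i(p)-\beta_j(p)+g(p)}{(1-p^{-1})^2}.$$
It remains to evaluate this product. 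At $p=2$, Lemma~\ref{betathm}(iii) gives two ``even'' coordinates with $\beta(2)=1$ and two ``odd'' ones with $\beta(2)=0$, and Lemma~\ref{gthm}(i) gives $g(2)=1$ iff both $v_i,v_j$ are even; so the numerator $1-\beta_i(2)-\beta_j(2)+g(2)$ vanishes unless both $i$ and $j$ are odd coordinates, in which case the $p=2$ factor equals $(1-0-0+0)/(1/2)^2=4$. Hence $H_{ij}(0,0)=0$ except for the two ordered pairs $(i,j)$ of distinct odd coordinates. For those, and for every $p\neq2$, $\beta_i(p)=\beta_j(p)=\beta(p)$ by Lemma~\ref{betathm}(ii), and substituting the values from Lemmas~\ref{betathm}(iv) and~\ref{gthm}(ii) (valid also at $p=3$) the $p$-factor equals $\tfrac{p^4}{(p^2-1)^2}$ for $p\equiv1\,(4)$ and $\tfrac{p^3(p-2)}{(p^2+1)(p-1)^2}=\tfrac{p^4}{(p^2+1)^2}\bigl(1-\tfrac{2}{p(p-1)^2}\bigr)$ for $p\equiv3\,(4)$. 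Since $\prod_{p\equiv1\,(4)}\tfrac{p^4}{(p^2-1)^2}\prod_{p\equiv3\,(4)}\tfrac{p^4}{(p^2+1)^2}=L^2(2,\chi_4)$, the product over $p\neq2$ equals $L^2(2,\chi_4)\prod_{p\equiv3\,(4)}\bigl(1-\tfrac{2}{p(p-1)^2}\bigr)$; each of the two surviving ordered pairs contributes $4\,L^2(2,\chi_4)\prod_{p\equiv3\,(4)}\bigl(1-\tfrac{2}{p(p-1)^2}\bigr)$, and the total sum is $8\,L^2(2,\chi_4)\prod_{p\equiv3\,(4)}\bigl(1-\tfrac{2}{p(p-1)^2}\bigr)$, as claimed.

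The step I expect to require the most care is the combinatorial bookkeeping that produces $L_p(s,t)$ from the $\gcd$ structure of $d_i/\nu,\,d_j/\nu,\,\nu$, together with verifying that $H_{ij}$ is genuinely holomorphic near $(0,0)$ so that the term-by-term differentiation of the Euler product is justified, and the consistency check of $\beta$ and $g$ at the small primes $p=2,3$. Conceptually the only new ingredient beyond Lemma~\ref{infinitesumcalc} is the $g$-twist: its entire effect is to replace the ``naive square'' $\bigl(2L(2,\chi_4)\bigr)^2$ by the corrected product above, the discrepancy being supported on the primes $p\equiv3\,(4)$ and equal to $\prod_{p\equiv3\,(4)}\bigl(1-\tfrac{2}{p(p-1)^2}\bigr)$.
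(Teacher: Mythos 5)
Your proposal is correct, and it reaches the same final product the paper does, but the execution is genuinely different and noticeably cleaner. The paper also introduces the two-variable Dirichlet series and exploits the fact that after factoring out $\zeta^{-1}(s_i+1)\zeta^{-1}(s_j+1)$ only the term where both zeta factors are differentiated survives at $(0,0)$; that analytic skeleton is identical to yours. Where you diverge is in handling the $\gcd$ coupling: the paper substitutes $d_i=\nu e_i$, $d_j=\nu e_j$, detaches the coprimality condition $(e_i,e_j)=1$ with a M\"obius sum over $m\mid(e_i,e_j)$, and ends up with a nested object $G_{i,j}(\nu,m,s_i,s_j)$ whose value at the origin is evaluated through a separate case analysis (including a standalone vanishing lemma for the pairs where at least one coordinate is even, argued by cancelling the $2\mid\nu$ and $2\mid m$ contributions against each other). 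You instead observe that, since only squarefree $d_i,d_j$ contribute, the coupled double sum is a single Euler product with local factor $1-\beta_i(p)p^{-s}-\beta_j(p)p^{-t}+g(p)p^{-s-t}$; this makes the vanishing for even coordinates a one-line consequence of $1-\beta_i(2)-\beta_j(2)+g(2)=0$, and the correction factor $\prod_{p\equiv 3\,(4)}\bigl(1-\tfrac{2}{p(p-1)^2}\bigr)$ drops out of the identity $1-2\beta(p)+g(p)=p(p-2)/(p^2+1)$ for $p\equiv 3\,(4)$ rather than from the paper's chain of intermediate products. Your route buys a shorter and more transparent computation; the paper's decomposition, while heavier, makes explicit how the $\nu$- and $m$-sums separately converge, which is closer in spirit to how such sums are organized in the sieve literature it draws on. Both arguments rest on the same inputs (multiplicativity of $\mathcal O_d$ from Theorem~\ref{padicorbit}, the values of $\beta$ and $g$ at all primes including $p=2,3$), and your check that the generic formulas for $\beta(p)$ and $g(p)$ already hold at $p=3$ is the right way to fold that prime into the Euler product.
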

\begin{proof}
We introduce the function
\begin{equation*}
f(s_i,s_j) = \sum_{d_i}\sum_{d_j} \beta_i\left(\frac{d_i}{\nu}\right)\beta_j\left(\frac{d_j}{\nu}\right)g(\nu)\mu(d_i)\mu(d_j)d_i^{s_i}d_j^{s_j},
\end{equation*}
and note that
\begin{equation*}
\left.\frac{\partial^2 f(s_i,s_j)}{\partial s_i\partial s_j}\right|_{(0,0)}=\sum_{d_i}\sum_{d_j}\beta\left(\frac{d_i}{\nu}\right)\beta\left(\frac{d_j}{\nu}\right)g(\nu)\mu(d_i)\mu(d_j)\log d_i\log d_j,
\end{equation*}
which is a good heuristic for the sum in (\ref{desired}) as $x$ tends to infinity.  The difficulty in computing this is the interaction of $d_i$ and $d_j$ in $g(\nu)$.  To this end, write $d_i=\nu e_i$ and $d_j=\nu e_j$ where $(e_i,e_j)=1$.  This gives us the following formula for $f(s_i,s_j)$.
\begin{eqnarray}
f(s_i,s_j)&=&
\sum_{\nu}g(\nu)\sum_{(e_i,e_j)=1}\beta_i(e_i)\beta_j(e_j)\mu(\nu e_i)\mu(\nu e_j)(\nu e_i)^{s_i}(\nu e_j)^{s_j}\nonumber\\&=&
\sum_{\nu}g(\nu)\nu^{-(s_i+s_j)}\sum_{e_i, e_j}\sum_{m|(e_i,e_j)} \mu(m)\beta_i(e_i)\beta_j(e_j)\mu(\nu e_i)\mu(\nu e_j) e_i^{s_i}e_j^{s_j}\\
&=& \sum_{\nu}g(\nu)\nu^{-(s_i+s_j)}\sum_m\mu(m)\mu^2(\nu m)m^{-(s_i+s_j)}\nonumber \\
&&\cdot \sum_{\stackrel{(b_i,\nu m)=1}{(b_j,\nu m)=1}}\beta_i(mb_i)\beta_j(mb_j)\mu(b_i)\mu(b_j) b_i^{s_i}b_j^{s_j}\nonumber \\
&=& 
\sum_{\nu}g(\nu)\mu^2(\nu)\nu^{-(s_i+s_j)}\sum_{(m,\nu)=1}\mu(m)\beta_i(m)\beta_j(m)m^{-(s_i+s_j)}\,A(\nu, m, s_i, s_j)\nonumber\\\nonumber
\end{eqnarray}
where
\begin{eqnarray}
A(\nu, m, s_i, s_j)&=&\sum_{\stackrel{(b_i,\nu m)=1}{(b_j,\nu m)=1}}\beta_i(b_i)\beta_j(b_j)\mu(b_i)\mu(b_j)b_i^{-s_i}b_j^{-s_j}  \nonumber \\&=&
\prod_{\stackrel{p_i|\nu m}{p_j|\nu m}}\left((1-\beta_i(p_i)p_i^{-s_i})(1-\beta_j(p_j)p_j^{-s_j})\right)^{-1}\nonumber\\
&&\cdot 
 \prod_{p_i,p_j}(1-\beta_i(p_i)p_i^{-s_i})(1-\beta_j(p_j)p_j^{-s_j}) \\&=&
P_{\nu}(s_i,s_j)\cdot P_m(s_i,s_j)\nonumber \\
&&\cdot \zeta^{-1}(s_i+1)\prod_{p_i}\frac{1-\beta_i(p_i)p_i^{-s_i}}{1-p_i^{-s_i-1}}\zeta^{-1}(s_j+1)\prod_{p_j}\frac{1-\beta_j(p_j)p_j^{-s_j}}{1-p_j^{-s_j-1}}\nonumber \\&=&
P_{\nu}(s_i,s_j)\cdot P_m(s_i,s_j)\cdot \zeta^{-1}(s_i+1)\zeta^{-1}(s_j+1)\cdot B(s_i,s_j),\nonumber\\\nonumber 
\end{eqnarray}
where
$$P_{\nu}(s_i,s_j) = \prod_{\stackrel{p_i|\nu}{p_j|\nu}}\left((1-\beta_i(p_i)p_i^{-s_i})(1-\beta_j(p_j)p_j^{-s_j})\right)^{-1},$$
$$P_m(s_i,s_j) = \prod_{\stackrel{p_i|m}{p_j|m}}\left((1-\beta_i(p_i)p_i^{-s_i})(1-\beta_j(p_j)p_j^{-s_j})\right)^{-1},$$
$$B(s_i,s_j) = \prod_{p_i}\frac{1-\beta_i(p_i)p_i^{-s_i}}{1-p_i^{-s_i-1}}\prod_{p_j}\frac{1-\beta_j(p_j)p_j^{-s_j}}{1-p_j^{-s_j-1}}.$$
We write
$$C(\nu,s_i,s_j) = g(\nu)\nu^{-(s_i+s_j)},$$
$$D(m, s_i,s_j) = \mu(m)\beta_i(m)\beta_j(m)m^{-(s_i+s_j)}$$
which gives us
\begin{eqnarray}
f(s_i,s_j)&=&\sum_{\nu}C(\nu, s_i, s_j)\cdot P_{\nu}(s_i,s_j)\sum_{(m,\nu)=1}D(m,s_i,s_j)\cdot P_m(s_i,s_j)\nonumber \\&&\cdot B(s_i,s_j)\cdot \zeta^{-1}(s_i+1)\zeta^{-1}(s_j+1).
\end{eqnarray}
Note that the sums over $\nu$ and $m$, as well as $B(s_i,s_j)$ converge and are holomorphic.  We now compute the desired derivative. Write
$$G_{i,j}(\nu,m,s_i,s_j)=\sum_{\nu}C(\nu, s_i, s_j)\cdot P_{\nu}(s_i,s_j)\sum_{(m,\nu)=1}D(m,s_i,s_j)\cdot P_m(s_i,s_j).$$
Then we have
\begin{eqnarray}\label{shiit2}
\left.\frac{\partial^2 f(s_i,s_j)}{\partial s_i\partial s_j}\right|_{(0,0)}=\zeta^{-2}(1)\left.\frac{\partial^2 (G_{i,j}(\nu,m,s_i,s_j))}{\partial s_i\partial s_j}\right|_{(0,0)} +\, G_{i,j}(\nu,m,0,0)\cdot B(0,0)\left(\frac{\zeta'(1)}{\zeta^2(1)}\right)^2\\ \nonumber
\end{eqnarray}
Since $\zeta^{-2}(1)=0$, we need not compute the partial derivative of $G$.  If $\beta_i(2)=\beta_j(2)=0$ in the expression for $B$, we have that $B(0,0) = 4\cdot L^2(2,\chi_4)$, since $\frac{\zeta'(1)}{\zeta^2(1)} = 2\cdot L(2,\chi_4)$ as in Section~\ref{theory}.  This holds for the sums in (\ref{desired}) over $(v_i^{\ast}, v_j)$ and $(v_j^{\ast}, v_i)$ where the $i$th and $j$th coordinates in our orbit are everywhere odd.

The contribution to (\ref{desired}) from the terms where $v_i^{\ast}$ or $v_j$ is even is $0$:

\begin{lemma}\label{zero}
Let $\mathcal O$ be an orbit of the Apollonian group.  Given that the $i$-th or $j$-th coordinate of each vector $\mathbf v\in\mathcal O$ is even, we have
$$G_{i,j}(\nu,m,0,0)\cdot B(0,0)=0$$
\end{lemma}

\begin{proof}
Recall that two of the coordinates of the vectors in $\mathcal O$ are always even, and two are odd.  We write
\begin{eqnarray}\label{split}
G_{i,j}(\nu,m,0,0)\cdot B(0,0)&=& B(0,0)\left(\sum_{2|\nu} C(\nu, 0, 0)\cdot P_{\nu}(0,0)\sum_{(m,\nu)=1}D(m,0,0)\cdot P_m(0,0)\right.\nonumber \\&&+ \left. \sum_{(\nu,2)=1} C(\nu, 0, 0)\cdot P_{\nu}(0,0)\sum_{(m,\nu)=1}D(m,0,0)\cdot P_m(0,0)\right).
\end{eqnarray}

\vspace{0.2in}
\noindent{\it Case 1: only one of the $i$-th and $j$-th coordinates is odd throughout the orbit.}

Recall from Lemma~\ref{gthm} that $g(2)=0$ in the case that only one of the coordinates $(i,j)$ is even throughout the orbit, and $g(2)=1$ if both coordinates are even throughout the orbit.  So, if only one of the coordinates $(i,j)$ is even throughout the orbit, we have
$$B(0,0)\cdot\left(\sum_{2|\nu} C(\nu, 0, 0)\cdot P_{\nu}(0,0)\sum_{(m,\nu)=1}D(m,0,0)\cdot P_m(0,0)\right)=0.$$
Recalling that $\beta_i(p)=\beta_j(p)$ for $p>2$ from Lemma~\ref{betathm}, we have
\begin{eqnarray}\label{2m}
&&B(0,0)\left(\sum_{(\nu,2)=1}C(\nu, 0, 0)\cdot P_{\nu}(0,0)\Bigl(\sum_{(m,\nu)=1}D(m,0,0)\cdot P_m(0,0)\Bigr)\right) \\
&=& \prod_{p_i,p_j}\left(\frac{(1-\beta_i(p_i))(1-\beta_j(p_j))}{(1-p_i^{-1})(1-p_j^{-1})}\right)\nonumber\\
&&\cdot\Biggl(\sum_{(\nu,2)=1}C(\nu, 0, 0)\cdot P_{\nu}(0,0)\Bigl(\sum_{\stackrel{(m,\nu)=1}{(m,2)=1}}D(m,0,0)\prod_{p|m}(1-\beta_i(p))^{-2}\Bigr)\Biggr)\nonumber\\
&+& B(0,0)\cdot \left(\sum_{(\nu,2)=1}C(\nu, 0, 0)\cdot P_{\nu}(0,0)\Bigl(\sum_{(2m,\nu)=1}\mu(2m)\beta_i(2m)\beta_j(2m)\cdot P_m(0,0)\Bigr)\right)\nonumber \\
&=& 0 + 0 \nonumber \\ &=& 0\nonumber\\ \nonumber
\end{eqnarray}
since Lemma~\ref{betathm} implies either $1-\beta_i(2)=0$ or $1-\beta_j(2)=0$ in the first term, and either $\beta_i(2m)=0$ or $\beta_j(2m)=0$ in the second term in (\ref{2m}).  We now compute the expression in (\ref{split}) in the case that both the $i$th and $j$th coordinate are even throughout the orbit.

\vspace{0.2in}
\noindent {\it Case 2: the $i$-th and $j$-th coordinates are both even throughout the orbit.}

In this case, Lemma~\ref{gthm} implies that $g(2\nu)=g(\nu)$ and that $C(2\nu,0,0)=C(\nu,0,0)$ for odd $\nu$.  Also, we again have that $\beta_i(p)=\beta_j(p)$ for $p>2$, so the first sum in (\ref{split}) is  
\begin{eqnarray}\label{first}
&&B(0,0)\cdot\left(\sum_{2|\nu} C(\nu, 0, 0)\cdot P_{\nu}(0,0)\Bigl(\sum_{(m,\nu)=1}D(m,0,0)\cdot P_m(0,0)\Bigr)\right)\\
&=&\left(\sum_{(\nu,2)=1} \bigl(C(2\nu, 0, 0)\prod_{p|\nu}(1-\beta_i(p))^{-2}\bigr)\Bigl(\sum_{(m,2\nu)=1}\bigl(D(m,0,0)\prod_{p|m}(1-\beta_i(p))^{-2}\bigr)\Bigr)\right)\nonumber \\
&&\cdot \prod_{p\not=2}\left(\frac{1-\beta_i(p)}{1-p^{-1}}\right)^2\nonumber\\
&=&\left(\sum_{(\nu,2)=1} \bigl(C(\nu, 0, 0)\prod_{p|\nu}(1-\beta_i(p))^{-2}\bigr)\Bigl(\sum_{(m,2\nu)=1}\bigl(D(m,0,0)\prod_{p|m}(1-\beta_i(p))^{-2}\bigr)\Bigr)\right)\cdot L^2(2,\chi_4)\nonumber
\end{eqnarray}
To compute the second sum in (\ref{split}), we note that $\beta_i(2m)=\beta_j(2m)=\beta_i(m)$ by Lemma~\ref{betathm} and write
\begin{eqnarray*}
&&B(0,0)\left(\sum_{(\nu,2)=1}C(\nu, 0, 0)\cdot P_{\nu}(0,0)\Bigl(\sum_{(m,\nu)=1}D(m,0,0)\cdot P_m(0,0)\Bigr)\right)\\
&=& M_1(\nu,0,0)+M_2(\nu,0,0),
\end{eqnarray*}
where 
$$M_1(\nu,0,0) = \Biggl(\sum_{(\nu,2)=1}C(\nu, 0, 0)\cdot P_{\nu}(0,0)\Bigl(\sum_{(m,2\nu)=1}\mu(m)\beta_i^2(m)\cdot\prod_{p|m}(1-\beta_i(p))^{-2}\Bigr)\Biggr) \cdot B(0,0)$$
$$M_2(\nu,0,0) = \left(\sum_{(\nu,2)=1}C(\nu, 0, 0)\cdot P_{\nu}(0,0)\Bigl(\sum_{(m,2\nu)=1}\mu(2m)\beta_i^2(m)\cdot \prod_{p|m}(1-\beta_i(p))^{-2}\Bigr)\right)\cdot B'(0,0)$$
where
$$B'(0,0)=\prod_{p\not=2}\left(\frac{1-\beta_i(p)}{1-p^{-1}}\right)^2$$
Note that, since $1-\beta_i(2)=1-\beta_j(2)=0$, we have $B(0,0)=0$ in this case, so
$$M_1(\nu,0,0) =0.$$
On the other hand,
\begin{eqnarray}\label{second}
&&M_2(\nu,0,0)\\
&=&\left(\sum_{(\nu,2)=1}C(\nu, 0, 0)\cdot P_{\nu}(0,0)\Bigl(\sum_{(m,2\nu)=1}-\mu(m)\beta_i^2(m)\cdot \prod_{p|m}(1-\beta_i(p))^{-2}\Bigr)\right)\cdot B'(0,0)\nonumber\\
&=&-\left(\sum_{(\nu,2)=1} \bigl(C(\nu, 0, 0)\prod_{p|\nu}(1-\beta_i(p))^{-2}\bigr)\Bigl(\sum_{(m,2\nu)=1}\bigl(D(m,0,0)\prod_{p|m}(1-\beta_i(p))^{-2}\bigr)\Bigr)\right)\cdot L^2(2,\chi_4)\nonumber
\end{eqnarray}
Combining (\ref{first}) and (\ref{second}), we have that the expression in (\ref{split}) is $0$, as desired.
\end{proof}
Therefore, we have that in the contributing terms of (\ref{desired}) both $v_i^{\ast}$ and $v_j$ are odd.  This makes up two of the terms in (\ref{desired}), so we have that the sum we wish to compute is equal to
\begin{equation}\label{contterms}
2\cdot G_{i,j}(\nu,m,0,0)\cdot 4 \cdot L^2(2,\chi_4)=
8\cdot L^2(2,\chi_4)\cdot G_{i,j}(\nu,m,0,0).
\end{equation}
 It remains to compute $G_{i,j}(\nu,m,0,0)$ in this case.  Recall that $\beta_i(2)=\beta_j(2)=0$ in this case, and that $\beta_i(p)=\beta_j(p)$ for $p>2$.  We have
\begin{eqnarray}\label{B00}
&&\sum_{(m,\nu)=1}\mu(m)\beta_i^2(m)\prod_{p|m}(1-\beta_i(p))^{-2}\nonumber \\ &=&
\prod_p 1-\frac{\beta_i^2(p)}{(1-\beta_i(p))^2}\prod_{p|\nu}\left(1-\frac{\beta_i^2(p)}{(1-\beta_i(p))^2}\right)^{-1} \\ &=&
\prod_{p\equiv 1\,(4)}1-\frac{1}{p^2}\prod_{p\equiv 3\,(4)}1-\frac{(p+1)^2}{(p^2-p)^2}\prod_{p|\nu}\left(1-\frac{\beta_i^2(p)}{(1-\beta_i(p))^2}\right)^{-1}\nonumber \\\nonumber 
\end{eqnarray}
We write
$$\sigma = \prod_{p\equiv 1\,(4)}1-\frac{1}{p^2}\prod_{p\equiv 3\,(4)}1-\frac{(p+1)^2}{(p^2-p)^2}$$
and get
\begin{eqnarray}\label{A00}
&&G_{i,j}(\nu,m,0,0)\nonumber\\&=&
\sigma\cdot \sum_{\nu}C(\nu,0,0)P_{\nu}(0,0)\cdot\prod_{p|\nu}\left(1-\frac{\beta^2(p)}{(1-\beta(p))^2}\right)^{-1}\nonumber \\ &=&
\sigma\cdot\sum_{\nu}\prod_{p|\nu}\frac{g(p)}{(1-\beta^2(p)(1-\beta(p))^{-2})(1-\beta(p))^2}\\&=&
\sigma\cdot\prod_p 1+\frac{g(p)}{(1-\beta^2(p)(1-\beta(p))^{-2})(1-\beta(p))^2}\nonumber \\ &=&
\sigma\cdot\prod_{p\equiv 1\, (4)}\left(1-\frac{1}{p^2}\right)^{-1}\prod_{p\equiv 3 \, (4)} 1+\frac{p^2+1}{p^4-2p^3-2p-1}\nonumber\\ &=&  \prod_{p\equiv 3\,(4)}\left(1-\frac{(p+1)^2}{(p^2-p)^2}\right)\left(1+\frac{p^2+1}{p^4-2p^3-2p-1}\right)\nonumber
\end{eqnarray}
Therefore our infinite sum is equal to 
\begin{equation}\label{answer}
8\cdot L^2(2,\chi_4)\cdot \prod_{p\equiv 3\,(4)}1-\frac{2}{p(p-1)^2}
\end{equation}
as desired.
\end{proof}
Conjecture~\ref{tpconj} follows from our assumption that the Moebius function is random and from Lemma~\ref{infinitesumcalc2}.  namely, we predict
$$\psi_P^{(2)}(x)\approx 8\cdot\frac{N_P(x)}{4}\cdot L^2(2,\chi_4)\cdot \prod_{p\equiv 3\,(4)}1-\frac{2}{p(p-1)^2} = c\cdot L^2(2,\chi_4),$$
where $c=1.646\dots$ as in Conjecture~\ref{tpconj}.

\begin{figure}[H]
\centering
\includegraphics[height = 62 mm]{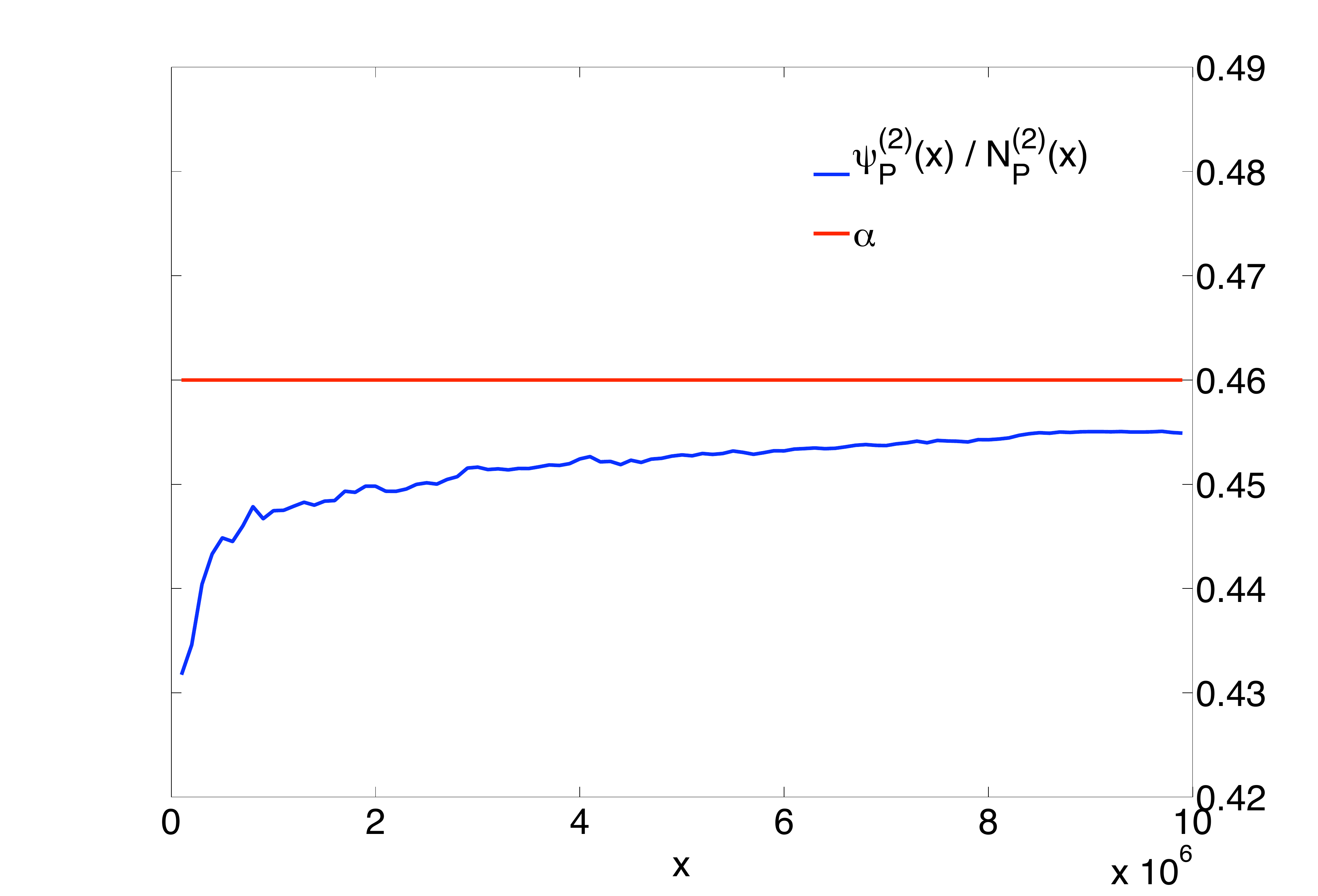}
\caption{Prime Number Theorem for Kissing Primes for the packing $P_B$}\label{kpnt}
\end{figure}

\begin{figure}[H]
\centering
\includegraphics[height = 62 mm]{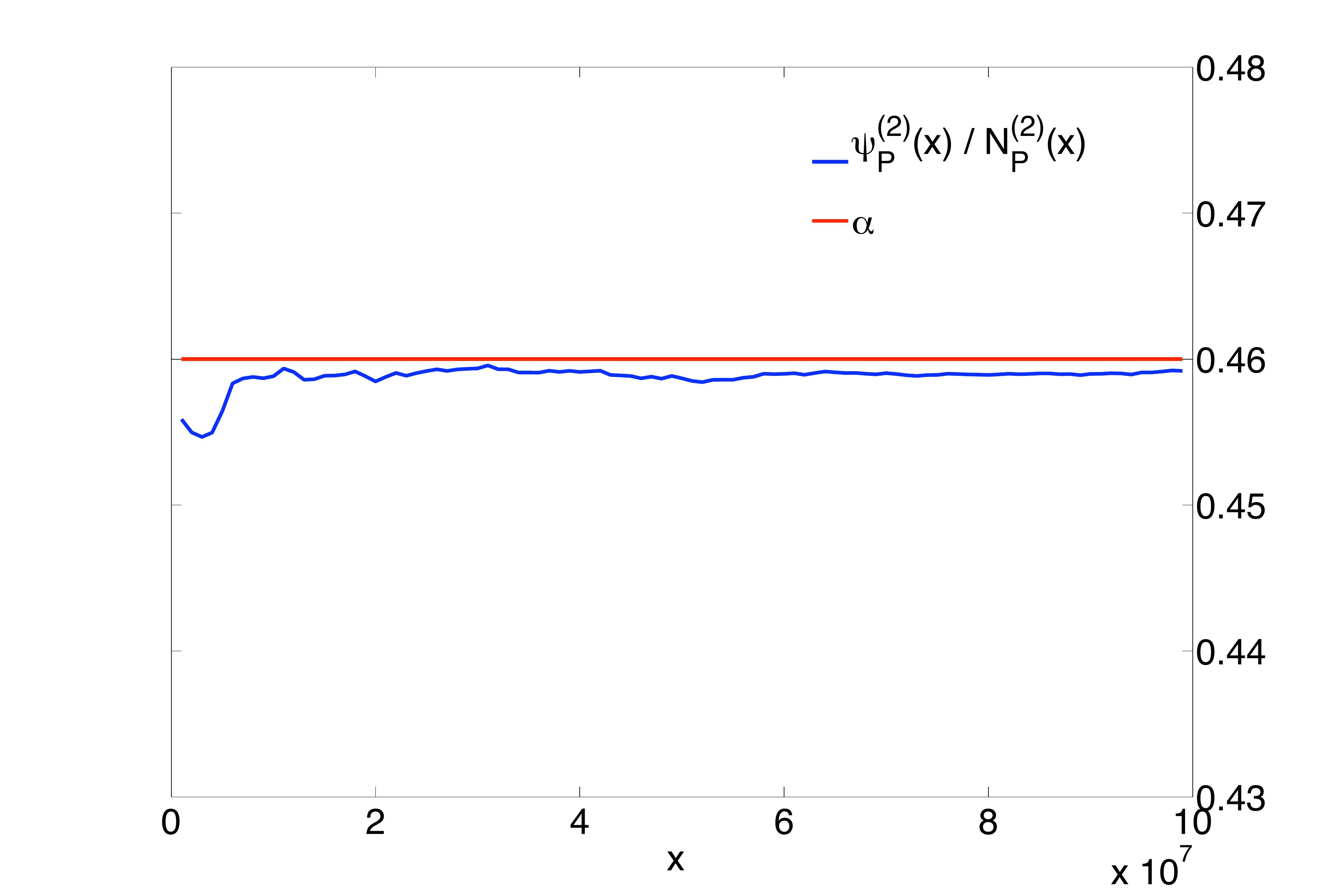}
\caption{Prime Number Theorem for Kissing Primes for the packing $P_C$}\label{kpntC}
\end{figure}

As with our heuristic for the number of prime curvatures less than $x$ in a packing, this count does not depend on the packing $P$.  Fig.~\ref{kpnt} and Fig.~\ref{kpntC} above show graphs of
$$y=\frac{\psi^{(2)}_{P}(x)}{N^{(2)}_{P}(x)}$$
for $P=P_B$ and $P_C$, and $x\leq 10^8$.  Both of these graphs converge to $\alpha = \frac{ c\cdot L^2(2,\chi_4)}{3}$ as conjectured, even though they converge slower than in the case of counting circles of prime curvature in Section~\ref{theory}. 

\section{Local to Global Principle for ACP's}\label{locglobal}

In this section we present numerical evidence in support of Conjecture~\ref{LG}, which predicts a local to global principle for the curvatures in a given integral ACP.  Since the Apollonian group $A$ is small -- it is of infinite index in $O_F(\mathbb Z)$ -- it is perhaps surprising that its orbit should eventually cover all of the integers outside of the local obstruction mod $24$ as specified in Theorem~\ref{padicorbit}. Proving this rigorously, however, appears to be very difficult.  An analogous problem over $\mathbb Z$ would be to show that all large integers satisfying certain local conditions are represented by a general ternary quadratic form -- this analogy is realized by fixing one of the curvatures in Descartes' form and solving the problem for the resulting ternary form.  While this problem has been recently resolved in general in \cite{Co} and \cite{DS}, even there the local to global principle comes in a much more complicated form, relying on congruence obstructions specified in the spin double cover.  Our conjecture, which therefore has the flavor of Hilbert's 11th problem for an indefinite form, predicts a local to global principle of a more straightforward nature.

Our computations suggest that this conjecture is true and we predict the value $X_P$ in the examples we check.  We consider the packings $P_B$ and $P_C$ introduced in Section~\ref{intro}.  Recall that $P_B$ corresponds to the orbit of $A$ acting on $(-1,2,2,3)$, and $P_C$ corresponds to the orbit of $A$ acting on $(-11,21,24,28)$.

In order to explain the data we obtain in both cases, we use Theorem~\ref{padicorbit} to determine the congruence classes (mod $24$) in the given packing.  Recall that the Apollonian group $A$ is generated by the four generators $S_i$ in (\ref{gens}).  We can view an orbit of $A$ modulo $24$ as a finite graph $\mathcal G_{24}$ in which each vertex corresponds to a distinct (mod $24$) quadruple of curvatures, and two vertices $\mathbf v$ and $\mathbf{v'}$ are joined by an edge iff $S_i\mathbf v=\mathbf{v'}$ for some $1\leq i\leq 4$.  Recall from Theorem~\ref{padicorbit} that for any orbit $\mathcal O$ of the Apollonian group,
\begin{equation}\label{2483}
\mathcal O_{24}=\mathcal O_8\times\mathcal O_3,
\end{equation}
so the graph $\mathcal G_{24}$ is completely determined by the structure of $\mathcal O_3$ and $\mathcal O_8$.  There are only two possible orbits modulo $3$, pictured in Fig.~\ref{mod31} and Fig.~\ref{mod32}.  There are many more possible orbits modulo $8$, and we provide the graphs for these orbits in the case of $P_B$ and $P_C$ in Fig.~\ref{mod8}.
\begin{figure}[H]
\centering
\includegraphics[height = 55 mm]{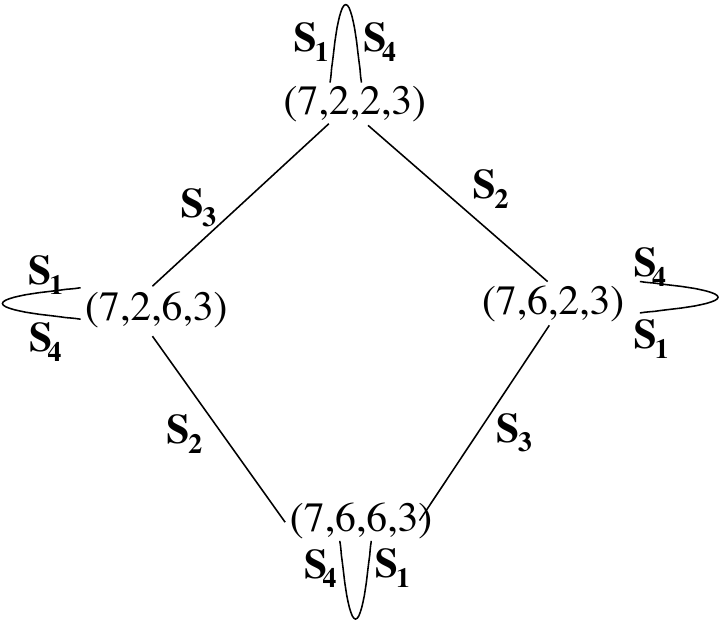}
\qquad
\includegraphics[height = 55 mm]{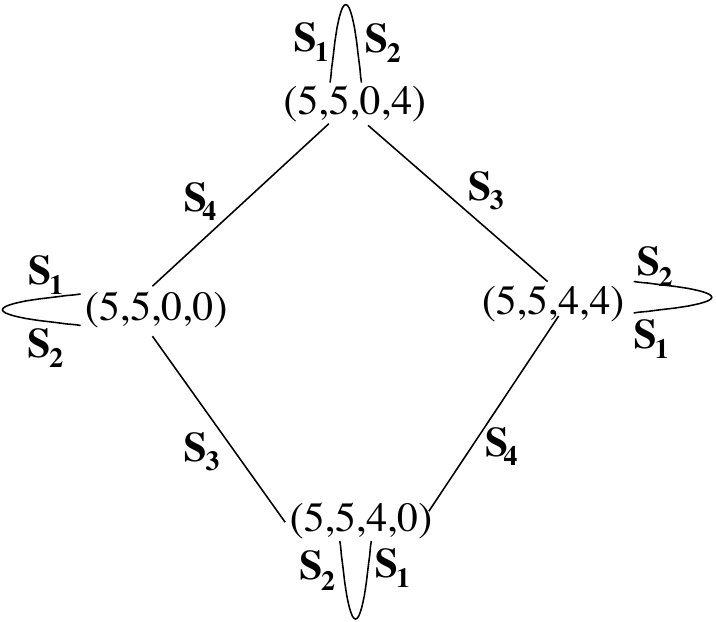}
\caption{Orbits of $P_B$ and $P_C$ modulo $8$}\label{mod8}
\end{figure}
Note that each vertex in $\mathcal G_{8}$ and $\mathcal G_3$ is connected to its neighboring vertices via all of the generators $S_i$.  Therefore the curvatures of circles in a packing modulo $24$ are equally distributed among the coordinates of the vertices in $\mathcal G_{24}$.  Combined with Theorem~\ref{padicorbit}, this lets us compute the ratio of curvatures in a packing which fall into a specific congruence class modulo $24$.  Namely, let $\mathcal{O}_{24}(P)$ be the orbit mod $24$ corresponding to a given packing $P$.  For $\mathbf w\in \mathcal{O}_{24}(P)$ let $w_i$ be the $i$th coordinate of $\mathbf w$.  We define $\gamma(n,P)$ as the proportion of coordinates in $\mathcal{O}_{24}(P)$ congruent to $n$ modulo $24$.  That is, 
\begin{equation}
\gamma(n,P) = \frac{\sum_{i=1}^4 \#\{ w \in \mathcal{O}_{24}(P) | \; w_i = n \} }{4 \cdot \#\{ \mathbf w \in \mathcal{O}_{24}(P) \} }. \label{gammadef}
\end{equation}
With this notation, a packing $P$ contains a circle of curvature congruent to $n$ modulo $24$ iff $\gamma(n,P)>0$.  Given (\ref{2483}), we express $\gamma$ as follows:
\begin{eqnarray}\label{multgamma}
\gamma(n,P)&=& \frac{\sum_{i=1}^4\#\{ \mathbf w \in \mathcal{O}_{24}(P) | \; w_i = n \} }{ 4\cdot \#\{ \mathbf w \in \mathcal{O}_{24} \} } \\\nonumber \\ &=&\frac{\sum_{i=1}^4 \#\{ \mathbf w \in \mathcal{O}_{8} | \; w_i \equiv n \; (3)\}\cdot\#\{ w \in \mathcal{O}_{3} |\; w_i \equiv n \;(8)\}}{ 4\cdot \#\{ \mathbf w \in \mathcal{O}_{8} \} \,\cdot\,  \#\{ \mathbf w \in \mathcal{O}_{3} \} }.\nonumber
\end{eqnarray}
The significance of $\gamma$ in the case of any packing (not only the two we consider) is explained in the following lemma.
\begin{lemma}\label{gammalemma}
Let $N_P(x)$ be as before, let $C$ be a circle in an integral Appolonian packing P, and let $a(C)$ be the curvature of $C$.  Then
$$\sum_{\stackrel{C\in P}{\stackrel{a(C)<x}{a(C)\equiv n\,(24)}}} 1 \sim \gamma(n,P)\cdot N_P(x)$$
\end{lemma}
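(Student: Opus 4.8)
The plan is to reduce the congruence count to the plain count $N_P(x)$ by using two equidistribution facts: that orbit points spread evenly among the four coordinate positions (this is Lemma~\ref{sullivan}), and that they spread evenly among the residue classes making up $\mathcal O_{24}$ (this rests on the same arithmetic input as Section~\ref{primeACP}). First I would rewrite the left‑hand side in terms of the orbit $\mathcal O$ of $A$: each circle $C$ in $P$ is represented exactly once as a maximal coordinate $v^{*}_{i(C)}$ of a unique $\mathbf v\in\mathcal O$ (the negligibly many quadruples with a repeated maximum contribute only $\textrm{O}(x)$), so
$$\sum_{\substack{C\in P,\ a(C)<x\\ a(C)\equiv n\,(24)}}\!\!1\ =\ \sum_{i=1}^{4}\ \sum_{\substack{\mathbf w\in\mathcal O_{24}\\ w_i=n}}\ \#\bigl\{\mathbf v\in\mathcal O\ :\ \mathbf v\equiv\mathbf w\ (24),\ v_i\text{ a maximal coordinate of }\mathbf v,\ v_i\le x\bigr\}\ +\ \textrm{O}(x).$$
Hence the lemma follows once one shows that, for each fixed $i$ and each fixed class $\mathbf w\in\mathcal O_{24}$,
$$\#\bigl\{\mathbf v\in\mathcal O\ :\ \mathbf v\equiv\mathbf w\ (24),\ v_i\text{ maximal},\ v_i\le x\bigr\}\ \sim\ \frac{N_P(x)}{4\,|\mathcal O_{24}|},$$
because then the inner sums no longer depend on $\mathbf w$, and $\sum_{i=1}^{4}\#\{\mathbf w\in\mathcal O_{24}:w_i=n\}=4\,|\mathcal O_{24}|\,\gamma(n,P)$ by the definition~(\ref{gammadef}).

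To prove the displayed asymptotic I would work in two stages. Stage one drops the congruence: Lemma~\ref{sullivan} gives $\#\{\mathbf v\in\mathcal O:\ v_i\text{ maximal},\ v_i\le x\}=N^{(i)}_P(x)\sim N_P(x)/4$. Stage two reinstates the condition $\mathbf v\equiv\mathbf w\ (24)$ and asserts that the proportion of the points counted in stage one that also satisfy it tends to $1/|\mathcal O_{24}|$, uniformly in $\mathbf w$. This is an equidistribution‑mod‑$24$ statement for the Apollonian orbit counted by an archimedean height, and it is exactly what the combination of Theorem~\ref{oh} with the expander estimates of Bourgain--Gamburd--Sarnak \cite{BoGaSa} provides — the same package that yields $\sum_{n\equiv 0\,(d)}a_n=\beta(d)\,c_Px^{\delta}+r(A,d)$ in Section~\ref{primeACP}; here it gives
$$\#\{\mathbf v\in\mathcal O:\ v_i^{*}\le x,\ \mathbf v\equiv\mathbf w\ (24)\}\ =\ \frac{1}{|\mathcal O_{24}|}\,\#\{\mathbf v\in\mathcal O:\ v_i^{*}\le x\}\ +\ \textrm{O}\bigl(x^{\delta-\epsilon_0}\bigr),$$
where the density is the \emph{uniform} one $1/|\mathcal O_{24}|$ because, by Theorem~\ref{padicorbit}, $\mathcal O_{24}$ is a single orbit of $A$ acting modulo $24$, so there is no finer invariant structure for the main term to respect. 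Combining the two stages gives the asymptotic, and with it the lemma.

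I expect the main obstacle to be stage two — specifically, pinning down that the main term $c_Px^{\delta}$ splits into \emph{exactly} equal pieces over the classes of $\mathcal O_{24}$ rather than merely into pieces of comparable size. This amounts to knowing that the Patterson--Sullivan (Bowen--Margulis--Sullivan) measure underlying the count in Theorem~\ref{oh} pushes forward, under reduction mod $24$, to the uniform probability measure on $\mathcal O_{24}$; given that $\mathcal O_{24}$ is a single $A$‑orbit (Theorem~\ref{padicorbit}), this is a consequence of the strong approximation and spectral‑gap results in \cite{BoGaSa}, but it is the one point in the argument that genuinely uses arithmetic rather than the combinatorics of the graph $\mathcal G_{24}$. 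The position‑equidistribution in stage one is the normalizer argument already carried out in the proof of Lemma~\ref{sullivan}, and everything else is the elementary bookkeeping recorded in the first paragraph.
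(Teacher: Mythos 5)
Your argument is correct and follows essentially the paper's own route: the authors dispose of this lemma in one line (``This follows from Theorem~\ref{padicorbit}''), leaning on the preceding observation that curvatures are equally distributed over the coordinates of the vertices of $\mathcal G_{24}$ --- which is precisely the equidistribution statement you isolate in your stage two and justify via the $\beta(d)\,c_P x^{\delta}+r(A,d)$ package from \cite{BoGaSa} and \cite{oh}. Your write-up is considerably more explicit than the paper's (the coordinate split via Lemma~\ref{sullivan}, the reduction to a per-class count of size $N_P(x)/(4\,|\mathcal O_{24}|)$, and the bookkeeping back to the definition of $\gamma$ in (\ref{gammadef})), but it is the same decomposition and the same key input, so there is nothing further to compare.
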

This follows from Theorem~\ref{padicorbit}.  Note that in general the orbits $\mathcal O_8(P)$ and $\mathcal O_3(P)$ have $4$ and, respectively, $10$ vertices in the corresponding finite graphs\footnote[4]{There are only two possible orbits mod $3$, but many more mod $8$.  We examine just two such orbits here.}.  Therefore $\mathcal G_{24}$ always has $40$ vertices, and the ratio in (\ref{multgamma}) is easily computed using this graph.  With this in mind, we observe the following about the packing $P_B$.

\begin{lemma}\label{bugob}
Let $P_{B,24}$ denote the possible congruence classes of curvatures mod $24$ in the packing $P_B$, and let $N_{P_B}(x)$ be as in Theorem~\ref{oh}.  Then we have
\begin{itemize}
\item[(i)]$N_{P_B}(x)\sim c_{P_B}\cdot x^{\delta}$, where $c_{P_B}=0.402\dots$
\item[(ii)] $P_{B,24}=\{2,3,6,11,14,15,18,23\}$
\vspace{0.6in}
\item[(iii)] 
\vspace{-0.45in}
\begin{eqnarray*}
\gamma(2,P_B) = \frac{3}{20} &\qquad& \gamma(14,P_B) = \frac{3}{20}  \\
\gamma(3,P_B) = \frac{1}{10} &\qquad& \gamma(15,P_B) = \frac{1}{10}  \\
\gamma(6,P_B) = \frac{1}{10} &\qquad& \gamma(18,P_B) = \frac{1}{10}  \\
\gamma(11,P_B) = \frac{3}{20} &\qquad& \gamma(23,P_B) = \frac{3}{20}.
\end{eqnarray*}
\vspace{0.2in}
\item[(iv)] For $10^6<x<5\cdot 10^8$, let $x_{24}$ denote $x$ mod $24$.  If $x_{24}\in P_{B,24}$ then $x$ is a curvature in the packing $P_B$.
\end{itemize}
\end{lemma}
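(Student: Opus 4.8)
The plan is to dispatch the four parts separately: (ii) and (iii) are finite computations with the generators $S_i$ of (\ref{gens}), part (i) is an appeal to Theorem~\ref{oh} together with a numerical determination of the constant, and (iv) is a direct computer verification.

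For (ii), I would first reduce the root quadruple $(-1,2,2,3)$ modulo $8$ and modulo $3$, obtaining $(7,2,2,3)\bmod 8$ and $(2,2,2,0)\bmod 3$, and then generate the two finite orbit graphs $\mathcal{G}_8$ and $\mathcal{G}_3$ by closing up under $S_1,\dots,S_4$; these turn out to have $4$ and $10$ vertices respectively (as in Fig.~\ref{mod8} and Figures~\ref{mod31}, \ref{mod32}). By the multiplicative structure recorded in (\ref{2483}), $\mathcal{O}_{24}=\mathcal{O}_8\times\mathcal{O}_3$, so a residue $n$ occurs as a curvature mod $24$ exactly when $n\bmod 8$ occurs as some coordinate entry in $\mathcal{G}_8$ and $n\bmod 3$ occurs as the same coordinate entry in $\mathcal{G}_3$; reading off the coordinate values of the two small graphs and recombining by the Chinese Remainder Theorem yields $P_{B,24}=\{2,3,6,11,14,15,18,23\}$.

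For (iii), I would feed the same two graphs into formula (\ref{multgamma}): for each admissible $n$ and each coordinate $i$, count the vertices of $\mathcal{O}_8$ whose $i$-th entry is $\equiv n\,(8)$ and the vertices of $\mathcal{O}_3$ whose $i$-th entry is $\equiv n\,(3)$, multiply, sum over $i=1,\dots,4$, and divide by $4\cdot|\mathcal{O}_8|\cdot|\mathcal{O}_3|=160$. This is pure bookkeeping over the $40$-vertex graph $\mathcal{G}_{24}$ and produces the eight listed values (for instance $24/160=3/20$ when $n=2$); as an internal check, the eight numerators must sum to $160$. For (i), Theorem~\ref{oh} already gives $N_{P_B}(x)\sim c_{P_B}x^{\delta}$ with $\delta=1.30568\dots$, so the only new content is the value $c_{P_B}=0.402\dots$, which I would obtain either by evaluating the Patterson--Sullivan expression for $c_P$ from \cite{oh} or, as is actually done in practice, by enumerating all circles of curvature up to a large cutoff and reading off the limit of $N_{P_B}(x)/x^{\delta}$.

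Part (iv) is the computational heart of the statement. I would enumerate the orbit of $A$ on $(-1,2,2,3)$ by the tree search described in Section~\ref{algorithm}, recording every curvature below $5\cdot10^8$ that appears as a maximal coordinate of some vector in the orbit; this terminates because along any infinite branch of the Apollonian tree the curvatures increase without bound, so a branch may be pruned once every continuation would exceed the cutoff, with the finitely many small quadruples near the root handled directly. Storing the resulting set $S$ of attained curvatures in a bit array and then checking, for every integer $x$ with $10^6<x<5\cdot10^8$ and $x\bmod 24\in P_{B,24}$, that $x\in S$ completes the verification. The main obstacle here is practical rather than conceptual: making the enumeration genuinely exhaustive up to $5\cdot10^8$ and efficient enough (controlling the branching of the tree and the memory for a bit array of that length), and being scrupulous that the pruning rule never discards a small curvature that is produced deep in the tree.
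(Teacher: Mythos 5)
Your proposal is correct and follows essentially the same route as the paper: parts (ii) and (iii) via the finite orbit graphs $\mathcal{G}_8$ and $\mathcal{G}_3$ combined through $\mathcal{O}_{24}=\mathcal{O}_8\times\mathcal{O}_3$ and formula (\ref{multgamma}), part (i) from Theorem~\ref{oh} plus a numerical determination of $c_{P_B}$, and part (iv) by exhaustive enumeration with the pruned tree search of Section~\ref{algorithm}. The paper states these steps only in outline, so your fleshed-out version (including the consistency check that the numerators sum to $160$) is a faithful expansion of the intended argument.
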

Part (iv) is an observation based solely on our computations using the algorithm described in Section~\ref{algorithm} -- these are illustrated in the histograms below.  The first three parts follow from computations combined with Theorem~\ref{oh} and Lemma~\ref{gammalemma}.  Note that $\gamma(n,P_B) = \gamma(n+12,P_B)$ -- for this particular packing, one can hence express the local obstructions modulo $12$ rather than modulo $24$.  Whenever this is the case for an integral ACP, we will find that there are eight congruence classes modulo $24$ in the curvatures of the circles.  Graham et.al. observe this in \cite{Apollo} where they compute which integers less than $10^6$ are ``exceptions" for $P_B$-- they find integers which satisfy these local conditions for $P_B$ modulo $12$ but do not occur as curvatures in the packing\footnote[5]{There is a small error in the computations of Graham et.al. -- we have found that the integer $13806\equiv 6\;(12)$ does not appear as a curvature in $P_B$.  Their results do not reflect this.}.  Our data extends their findings (we consider integers up to $5 \times 10^8$), and shows that all integers between $10^6$ and $10^7$  belonging to one of the congruence classes in part (ii) of Observation~\ref{bugob} appear as curvatures in the packing $P_B$.

The following histograms illustrate the distribution of the frequencies with which each integer in the given range satisfying the specified congruence condition occurs as a curvature in the packing $P_B$.  The frequencies seem normally distributed, and the means of the distributions can be computed, as we do in (\ref{meaneq}).  The variance, however, is much more difficult to predict at this time -- explaining the behavior of the variance as we consider larger integers would shed more light on our local to global conjecture.  Note that there are no exceptions to the local to global principle in this range whenever $0$ is not a frequency represented in the histogram (i.e. each integer occurs at least once).  There are several other frequencies not represented in the histograms for both $P_B$ and $P_C$ (these show up as gaps in the graphs), and an explanation of this aspect would be interesting.
\vspace{1in}

\includegraphics[height=54mm]{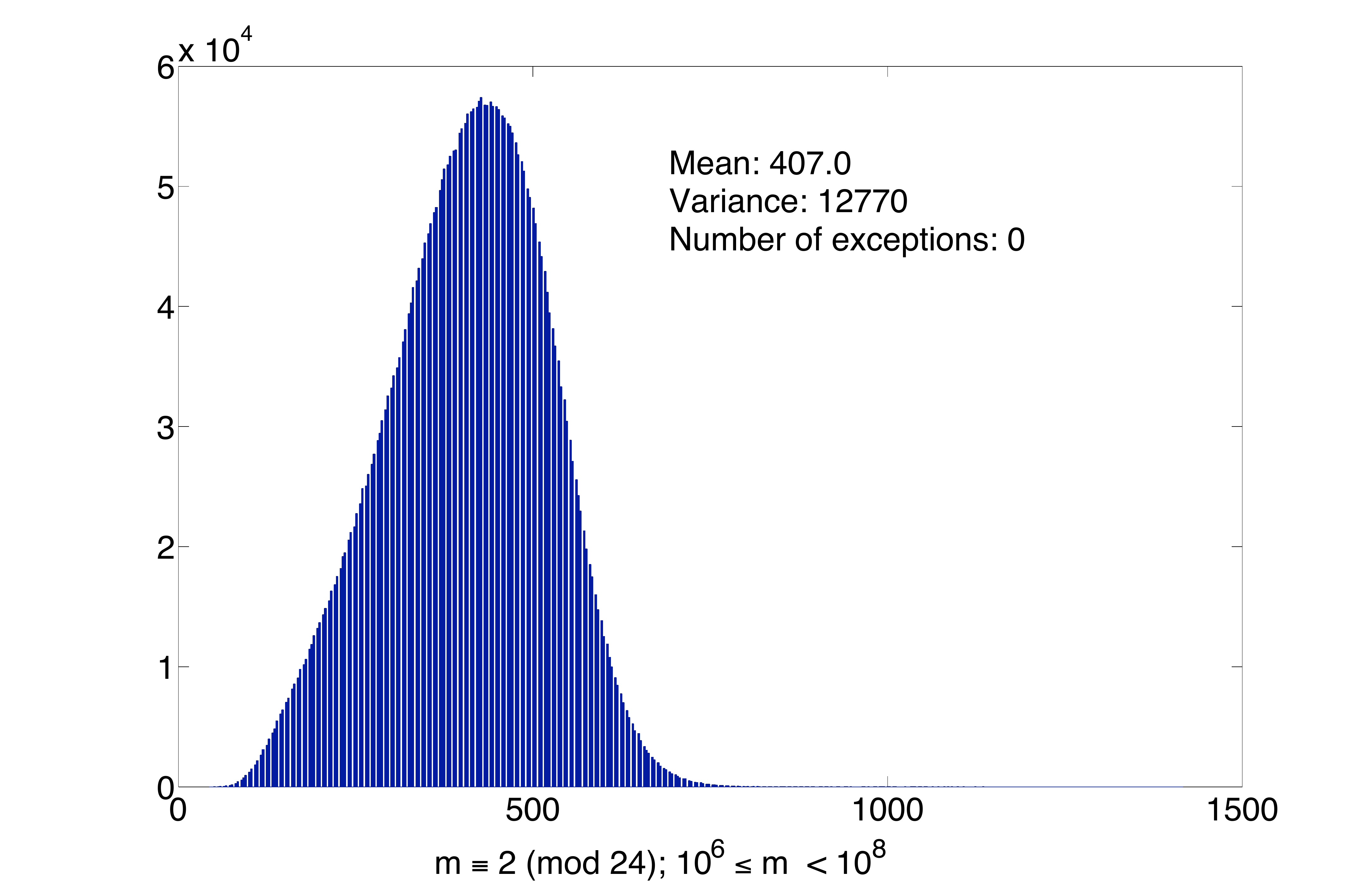}
\hfill
\includegraphics[height=54mm]{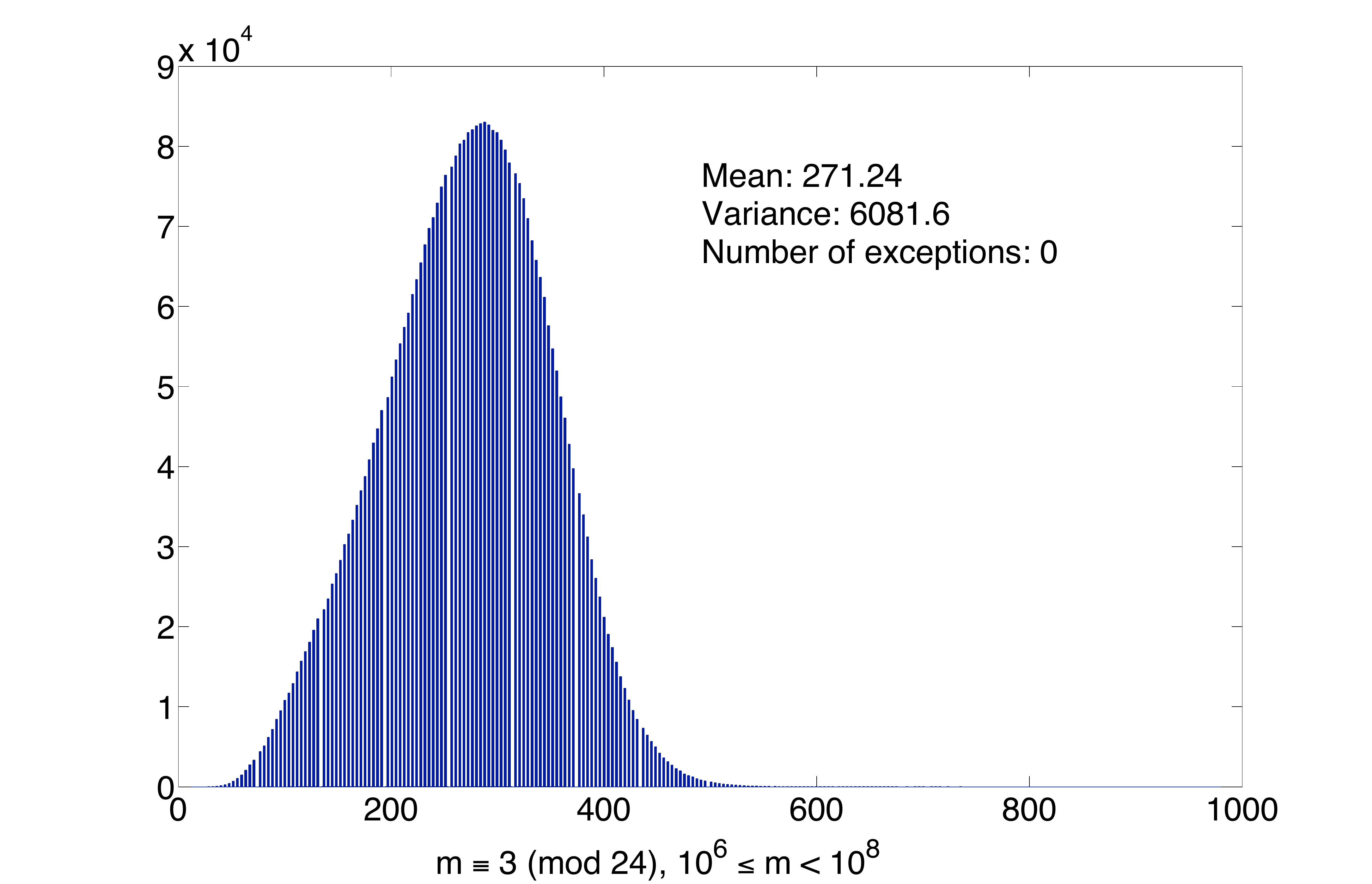}

\includegraphics[height=54mm]{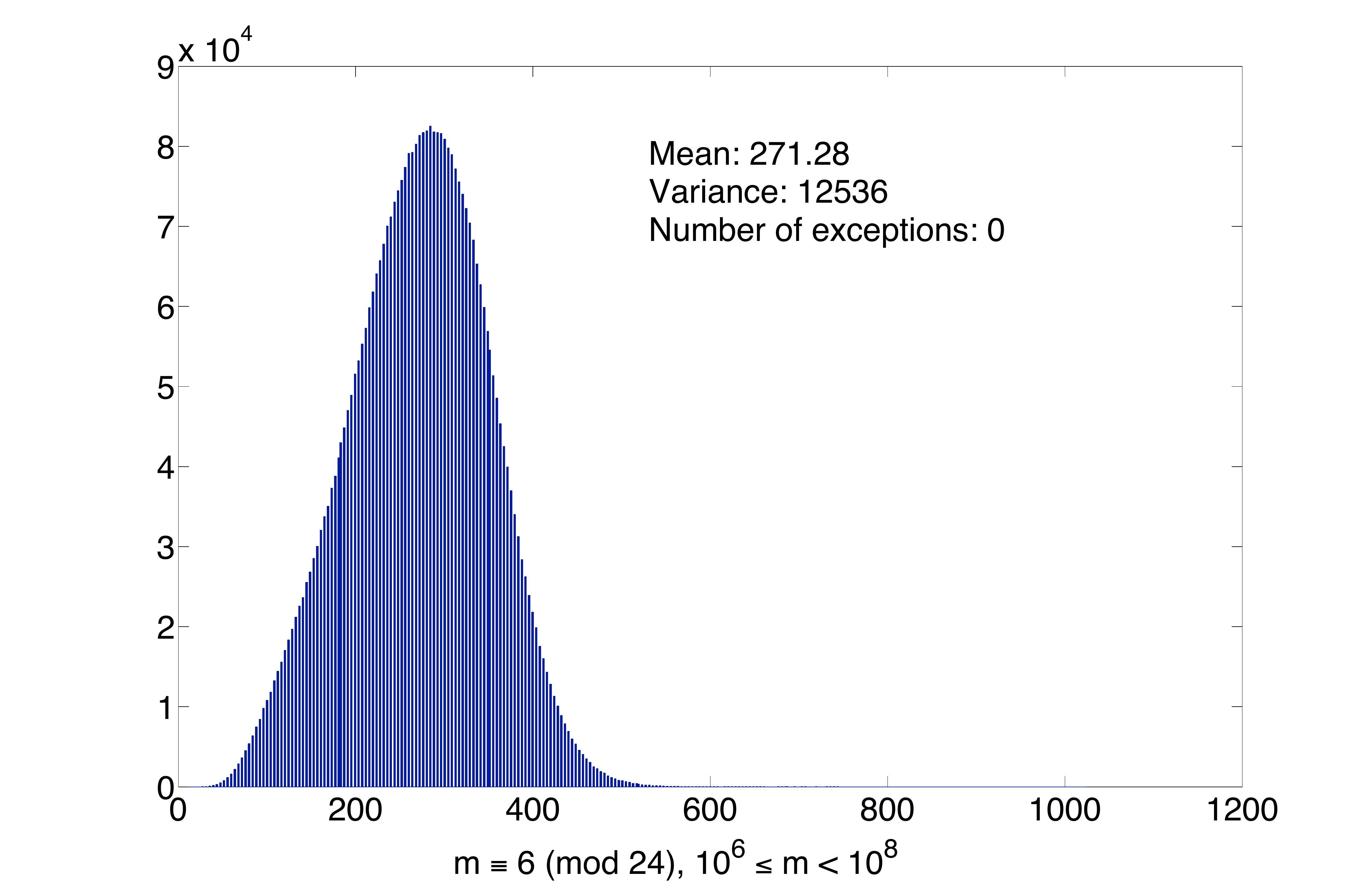}
\hfill
\includegraphics[height=54mm]{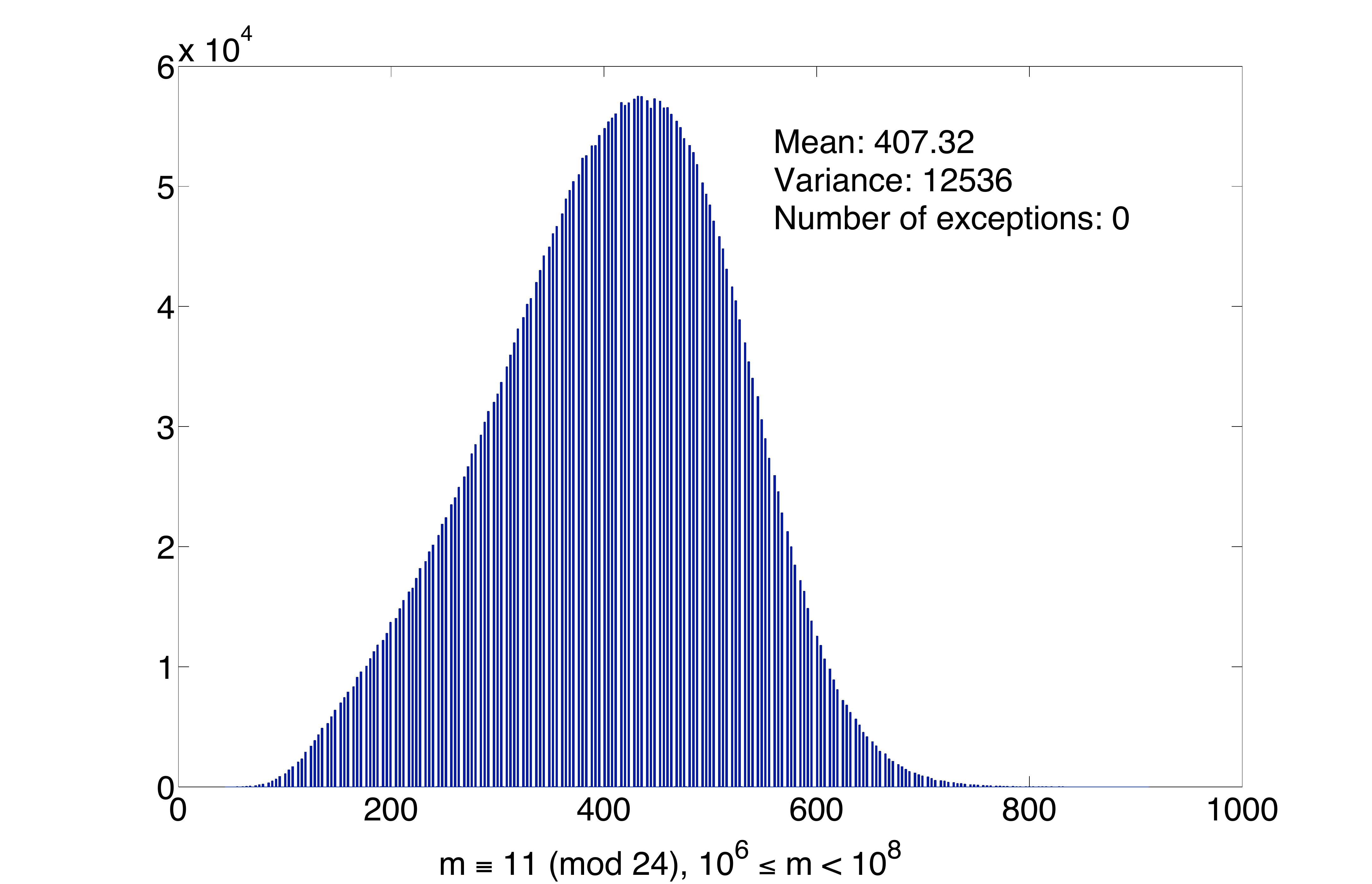}

\includegraphics[height=54mm]{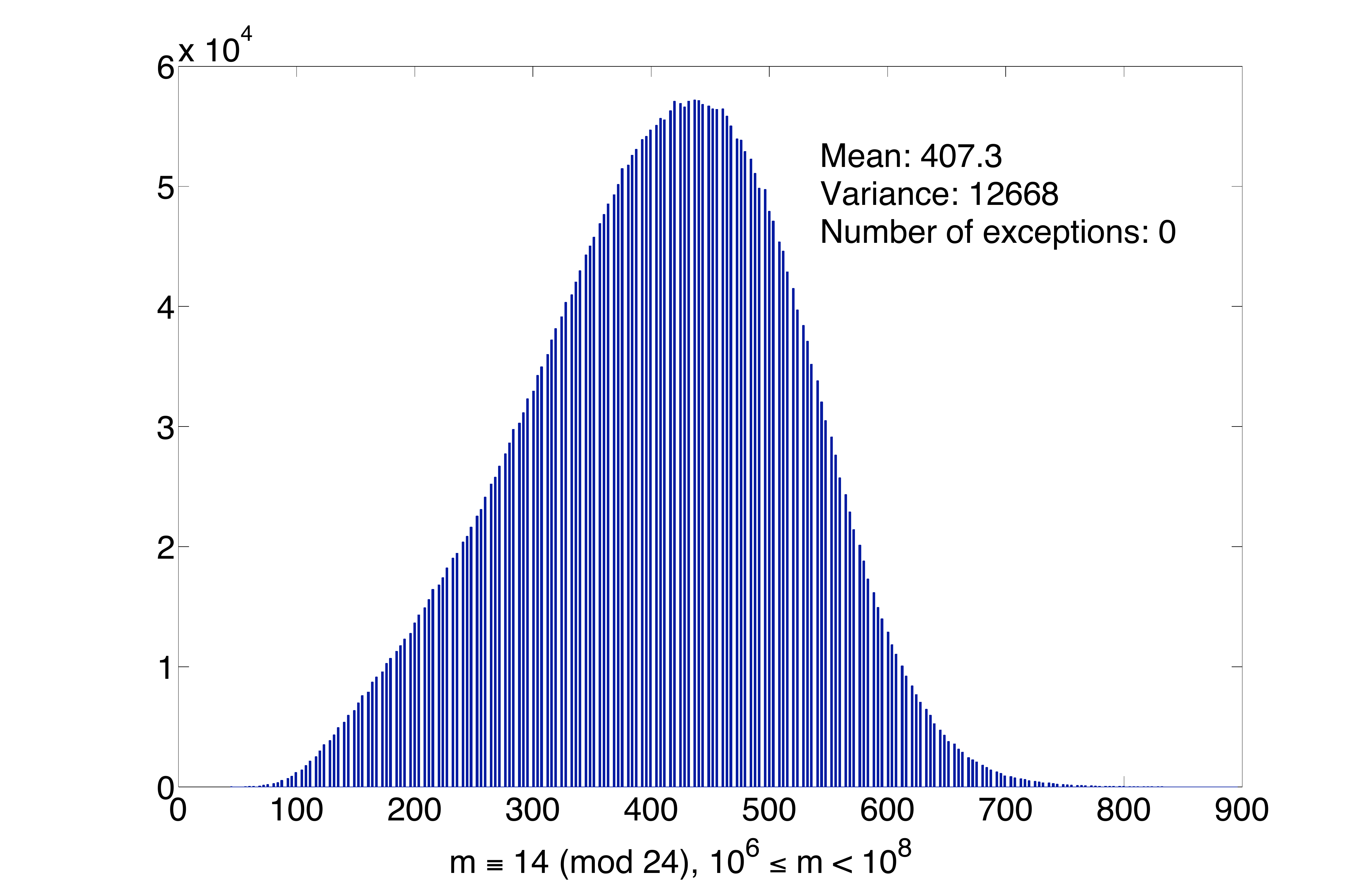}
\hfill
\includegraphics[height=54mm]{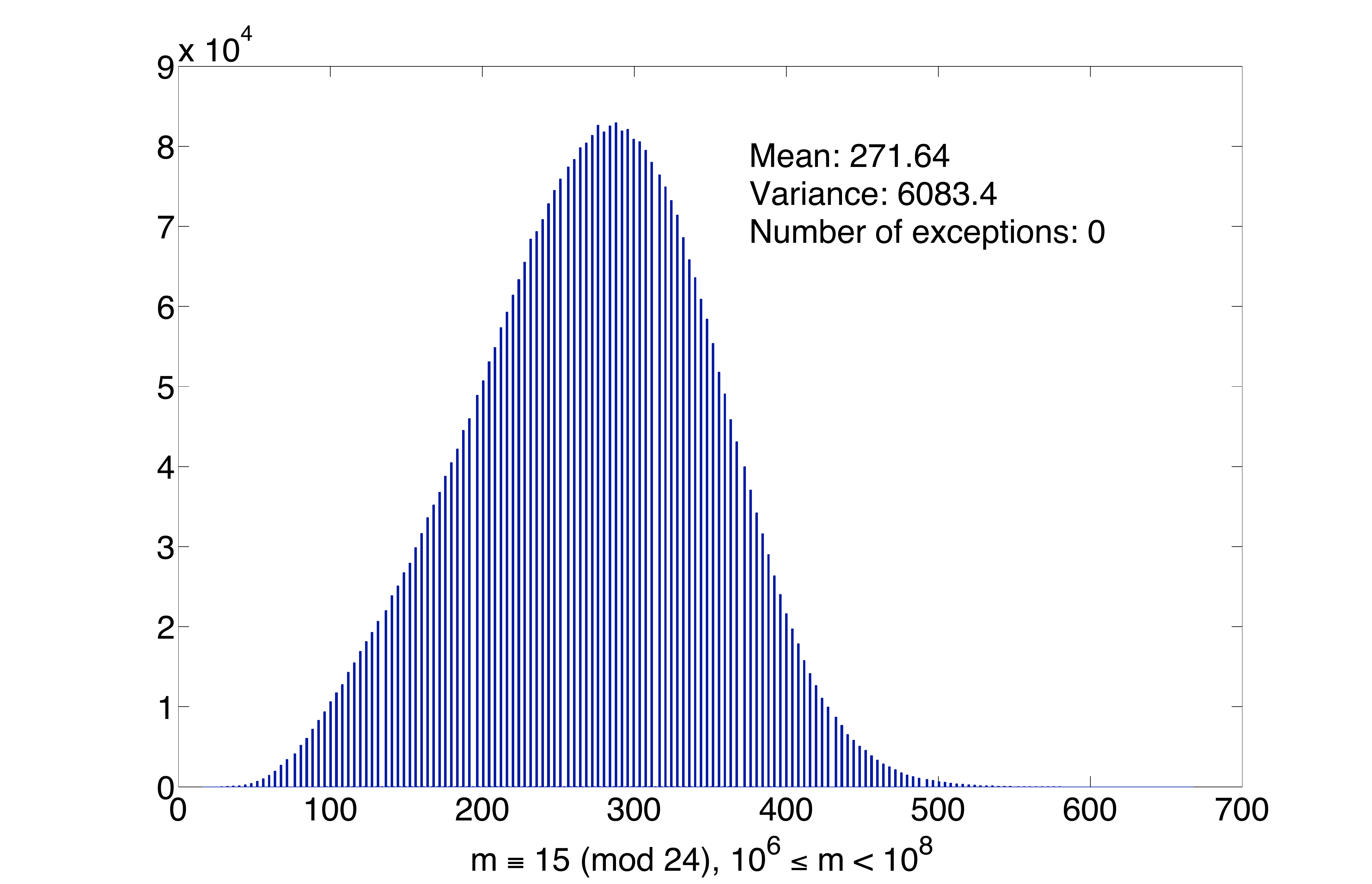}

\includegraphics[height=54mm]{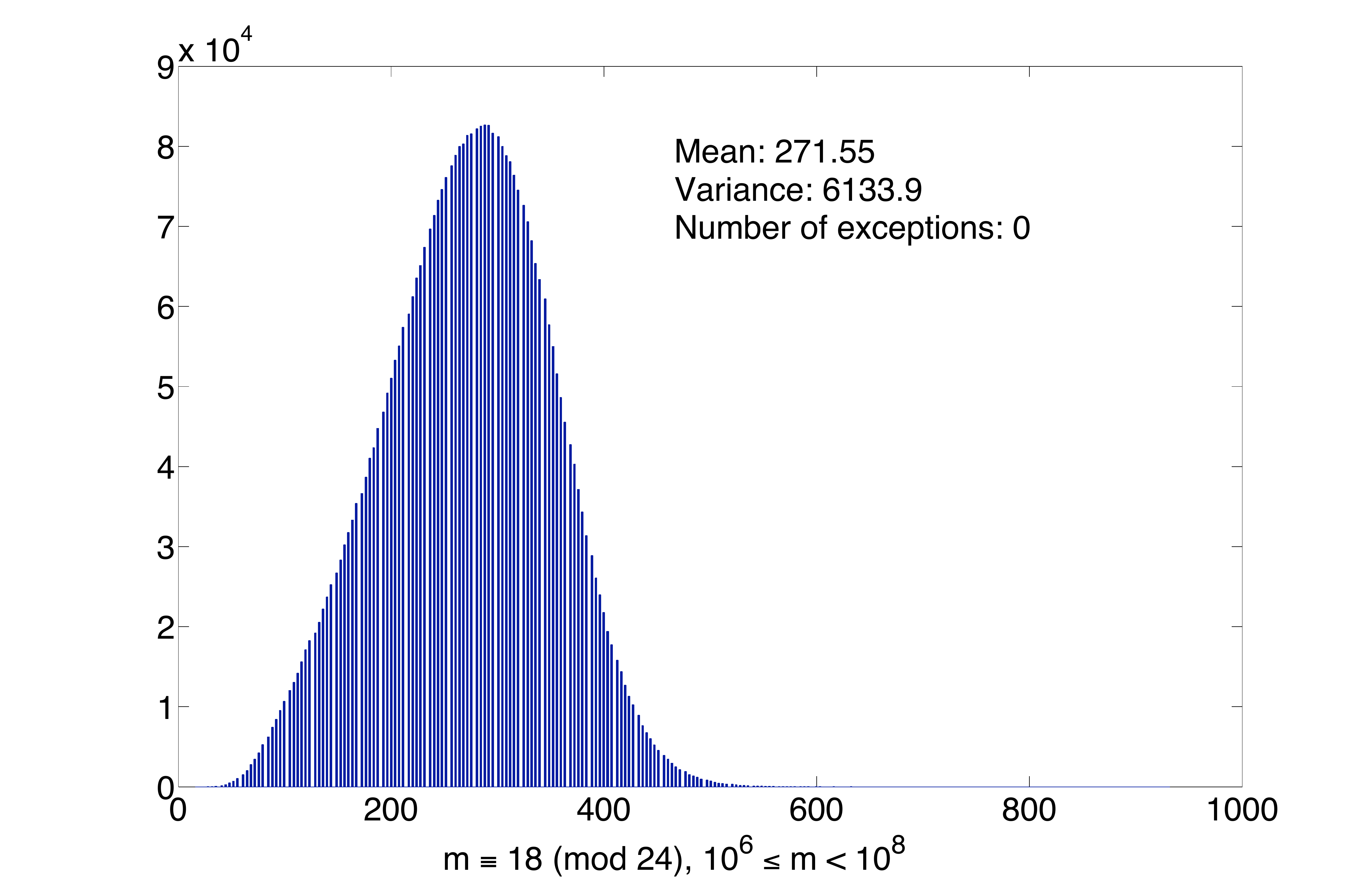}
\hfill
\includegraphics[height=54mm]{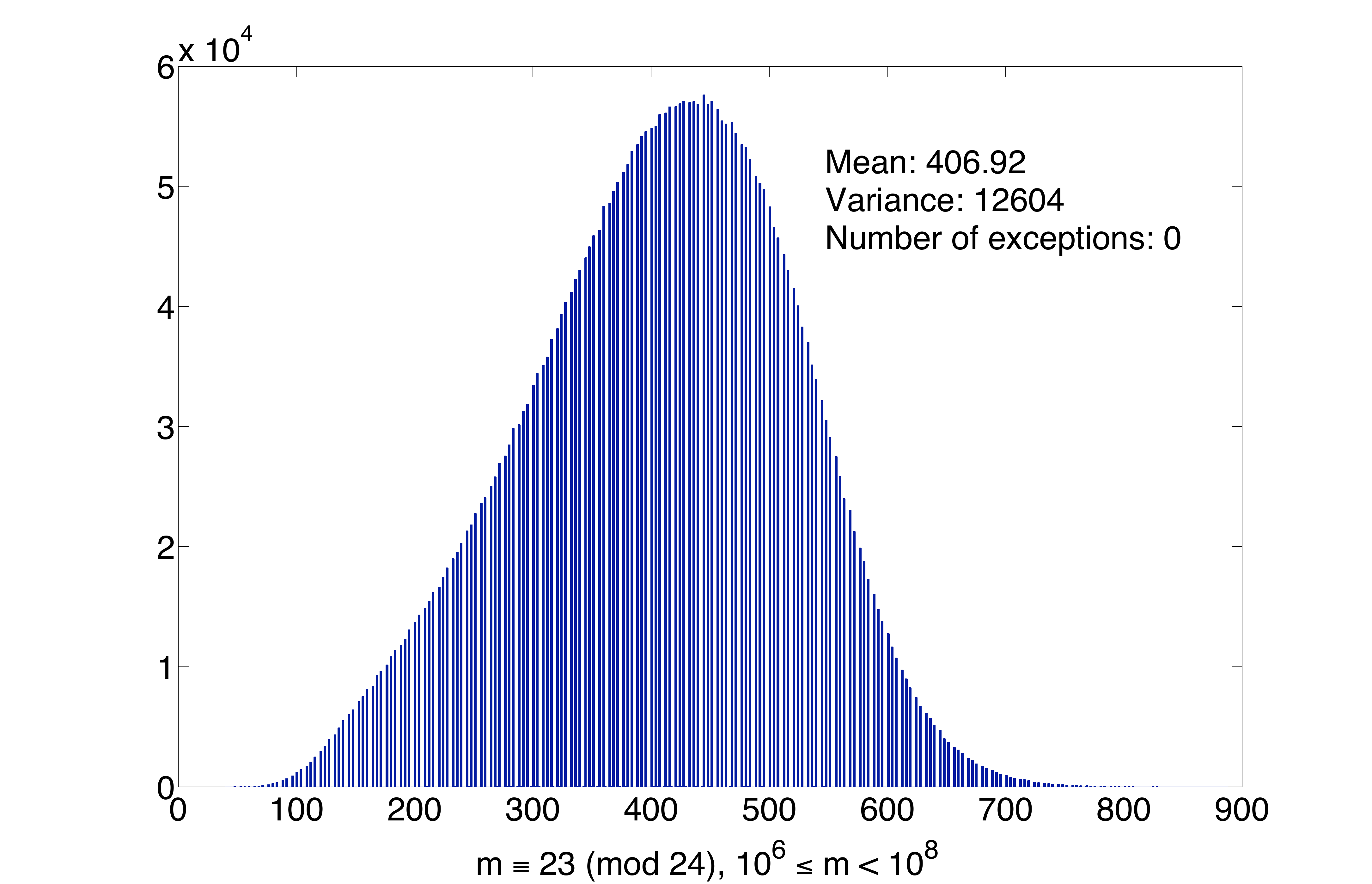}

We do the same analysis for the packing $P_C$.  In this case, we must consider much larger integers than in the case of $P_B$ in order to get comparable results.  This can be explained partially by the fact that the constant $c_P$ in Kontorovich-Oh's formula
\begin{equation}\label{c}
N_P(x)\sim c_P\cdot x^\delta
\end{equation}
is much smaller for the packing $P_C$ than the packing $P_B$ since the initial four circles in $P_C$ are much larger than the initial four in $P_B$ (See part (i) of Lemmata~\ref{bugob} and \ref{coinob}).  Specifically, $c_{P_C}=0.0176\dots$ and $c_{P_B}=0.402\dots$.  However, our data suggests that the proposed local to global principle should hold for this packing as well.
\begin{lemma}\label{coinob}
Let $P_{C,24}$ denote the possible congruence classes mod $24$ in the packing $P_C$, and let $N_{P_C}(x)$ be as in Theorem~\ref{oh}.  Then we have
\begin{itemize}
\item[(i)]$N_{P_C}(x)\sim c_{P_C} \cdot x^{\delta}$, where $c_{P_C}=0.0176\dots$
\item[(ii)] $P_{C,24}=\{ 0,4,12,13,16,21\}$
\vspace{0.4in}
\item[(iii)]
\vspace{-0.3in}
\begin{eqnarray*}
\gamma(0,P_C) = \frac{1}{10} &\qquad& \gamma(13,P_C) = \frac{3}{10}  \\
\gamma(4,P_C) = \frac{3}{20} &\qquad& \gamma(16,P_C) = \frac{3}{20}  \\
\gamma(12,P_C) = \frac{1}{10} &\qquad& \gamma(21,P_C) = \frac{4}{20} \\
\end{eqnarray*}
\item[(iv)] For $10^8<x<5 \,\cdot\,10^8$, let $x_{24}$ denote $x$ mod $24$.  If $x_{24}=13$ or $x_{24}=21$, then $x$ is a curvature in the packing $P_C$. 
\end{itemize}
\end{lemma}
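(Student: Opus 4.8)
The plan is to establish the four parts exactly as in the proof of Lemma~\ref{bugob}, the only new input being the explicit root quadruple $(-11,21,24,28)$ of $P_C$.

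For part (i), I would invoke Theorem~\ref{oh} directly: it provides a constant $c_{P_C}>0$, depending on the packing, with $N_{P_C}(x)\sim c_{P_C}\cdot x^{\delta}$, $\delta=1.30568\dots$. The numerical value $c_{P_C}=0.0176\dots$ is then read off by enumerating the circles of $P_C$ of curvature at most $x$ with the algorithm of Section~\ref{algorithm} and fitting $N_{P_C}(x)/x^{\delta}$; its smallness relative to $c_{P_B}$ simply reflects that the four generating circles of $P_C$ are much larger (have larger curvature) than those of $P_B$.

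For parts (ii) and (iii), the key tool is Theorem~\ref{padicorbit}, which gives the multiplicative splitting $\mathcal O_{24}=\mathcal O_8\times\mathcal O_3$, together with the observation already used for $P_B$ that in the finite graphs $\mathcal G_8$ and $\mathcal G_3$ every vertex is joined to its neighbours by all four generators $S_i$, so the residues mod $24$ are equidistributed among the four coordinates of the vertices of $\mathcal G_{24}$. Concretely I would reduce the root quadruple, $(-11,21,24,28)\equiv(5,5,0,4)\pmod 8$ and $(-11,21,24,28)\equiv(1,0,0,1)\pmod 3$, generate the two finite orbits under the reduced action of $S_1,\dots,S_4$ — the mod-$3$ orbit being one of the two in Figures~\ref{mod31}--\ref{mod32} and the mod-$8$ orbit being the one drawn for $P_C$ in Fig.~\ref{mod8} — then read off from $\mathcal G_{24}$ which residues $n$ occur in some coordinate, i.e.\ have $\gamma(n,P_C)>0$, obtaining $P_{C,24}=\{0,4,12,13,16,21\}$, and finally compute the proportions in (iii) from (\ref{multgamma}) using the vertex counts ($10$ mod $3$, $4$ mod $8$, hence $40$ mod $24$). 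This is a short finite check once the two graphs are drawn; combining it with Theorem~\ref{oh} and Lemma~\ref{gammalemma} promotes the proportions to the stated asymptotics.

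Part (iv) does not admit a closed-form argument: it is a direct computer verification. Here I would run the enumeration of Section~\ref{algorithm} to list all circles of $P_C$ of curvature below $5\cdot 10^8$, sort the curvatures, and confirm that every integer $x$ with $10^8<x<5\cdot 10^8$ and $x\equiv 13$ or $21\pmod{24}$ actually occurs. The main obstacle is entirely computational — making the enumeration of $P_C$ out to curvature $5\cdot 10^8$ feasible in time and memory — which is exactly what the algorithm and running-time discussion of Section~\ref{algorithm} are for; the claim is limited to the classes $13$ and $21$ because, unlike for $P_B$, the remaining admissible classes $0,4,12,16$ still contain genuine exceptions beyond $10^8$, so no clean statement holds there in this range.
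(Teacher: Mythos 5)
Your proposal matches the paper's own treatment: part (i) is Theorem~\ref{oh} plus a numerical fit of the constant, parts (ii) and (iii) come from the finite graphs $\mathcal G_8$ and $\mathcal G_3$ via the splitting $\mathcal O_{24}=\mathcal O_8\times\mathcal O_3$ of Theorem~\ref{padicorbit} together with (\ref{multgamma}) and Lemma~\ref{gammalemma}, and part (iv) is acknowledged as a pure computer verification using the algorithm of Section~\ref{algorithm}. Your reductions of the root quadruple mod $8$ and mod $3$ are correct, and your explanation of why (iv) is restricted to the classes $13$ and $21$ agrees with the paper's discussion of the remaining exceptions in the other admissible classes.
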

Again, note that part (iv) is an observation based solely on our computations, while the first three parts in Lemma~\ref{coinob} rely on Lemma~\ref{gammalemma} and Theorem~\ref{oh}.  The histograms below illustrate the distribution of the frequencies with which each integer in the given range satisfying the specified congruence condition occurs as a curvature in the packing $P_C$.  Note that, as with $P_B$, the frequencies with which integers are represented in the packing seem to have a normal distribution.  However, since the mean of this distribution is much smaller for $P_C$ than for $P_B$, we find that $0$ is often a frequency represented in the histograms, and so there are still some exceptions to the proposed local to global principle in the range we consider.

\includegraphics[height=54mm]{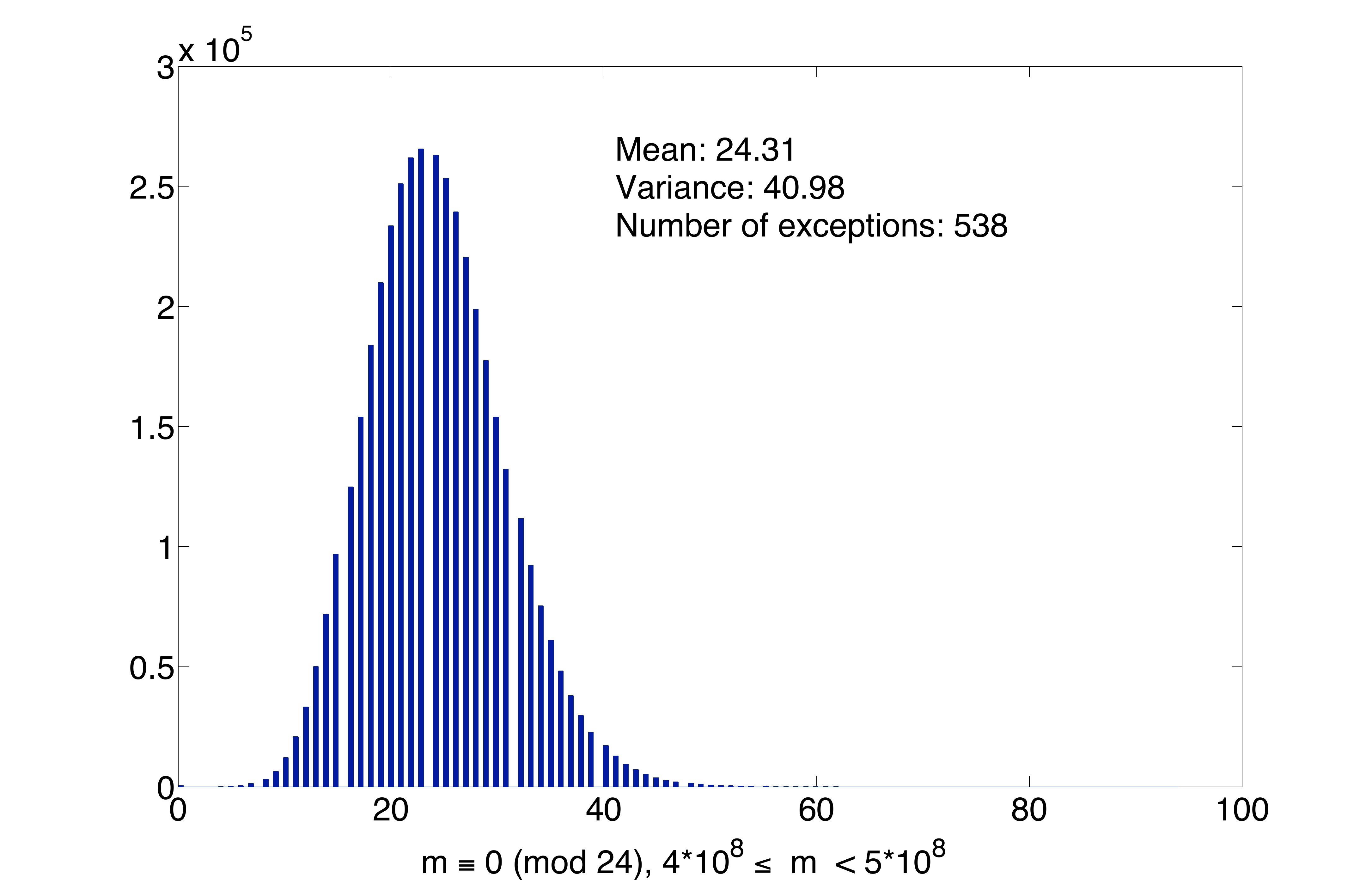}
\hfill
\includegraphics[height=54mm]{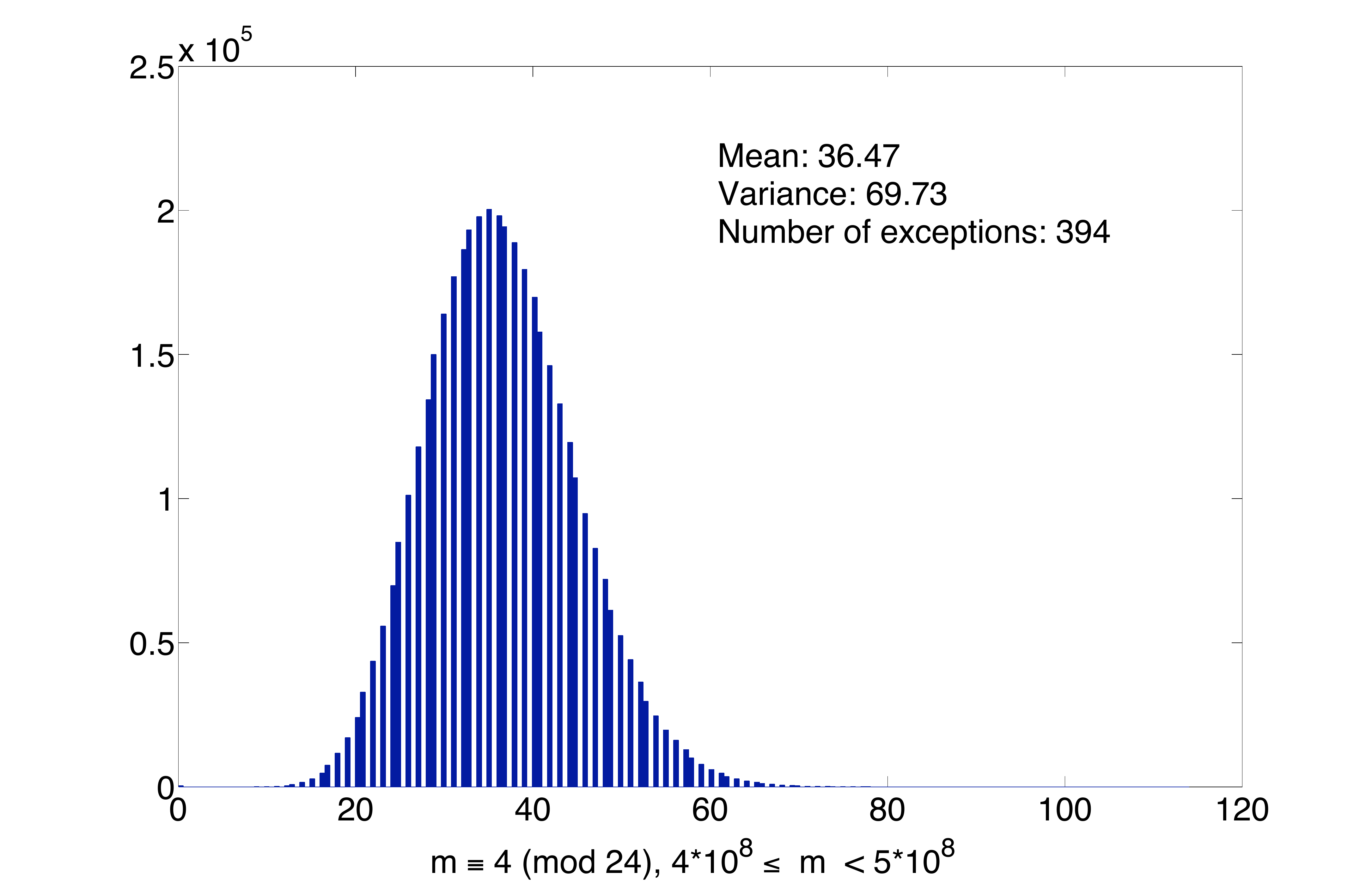}

\includegraphics[height=54mm]{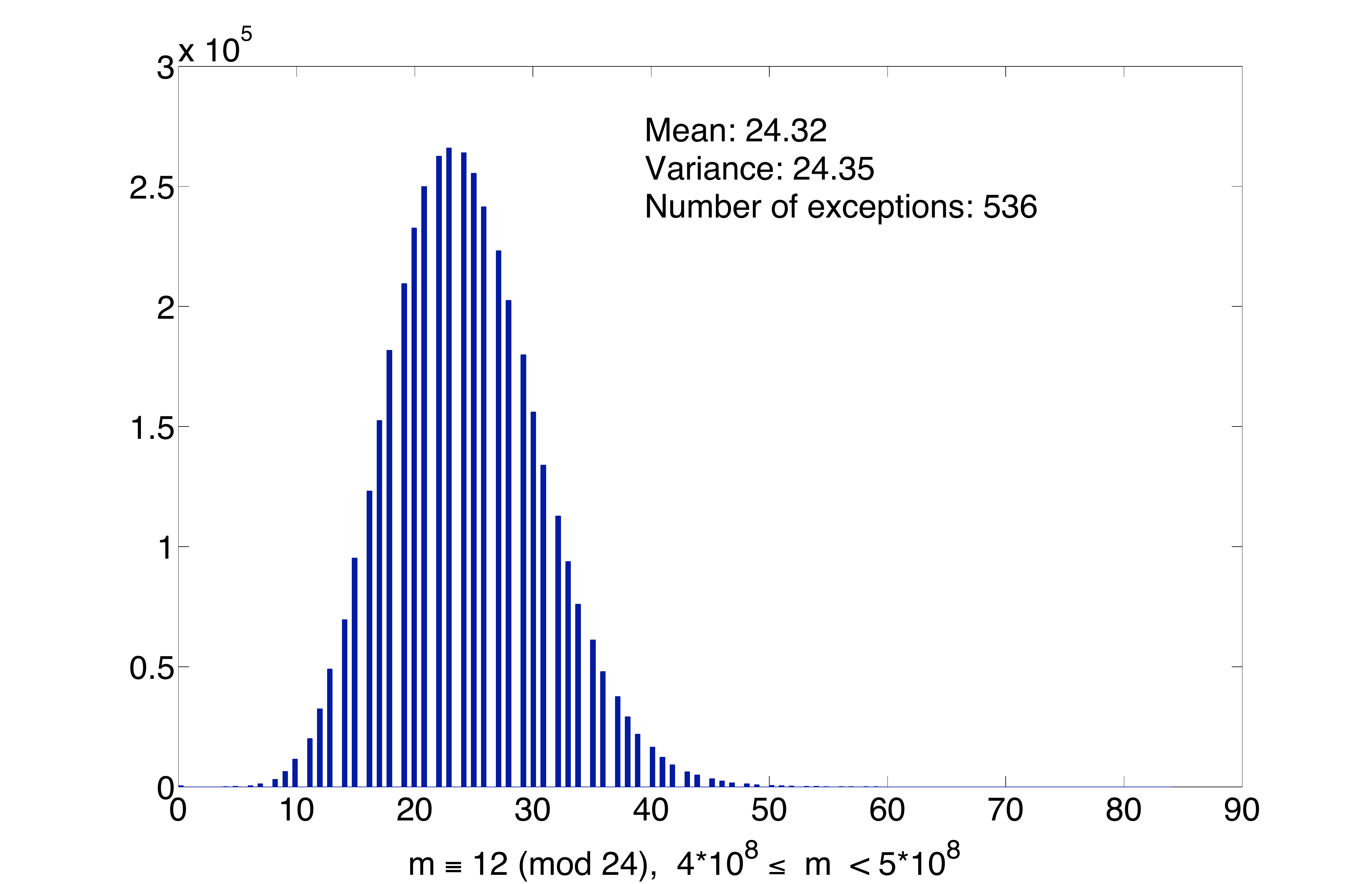}
\hfill
\includegraphics[height=54mm]{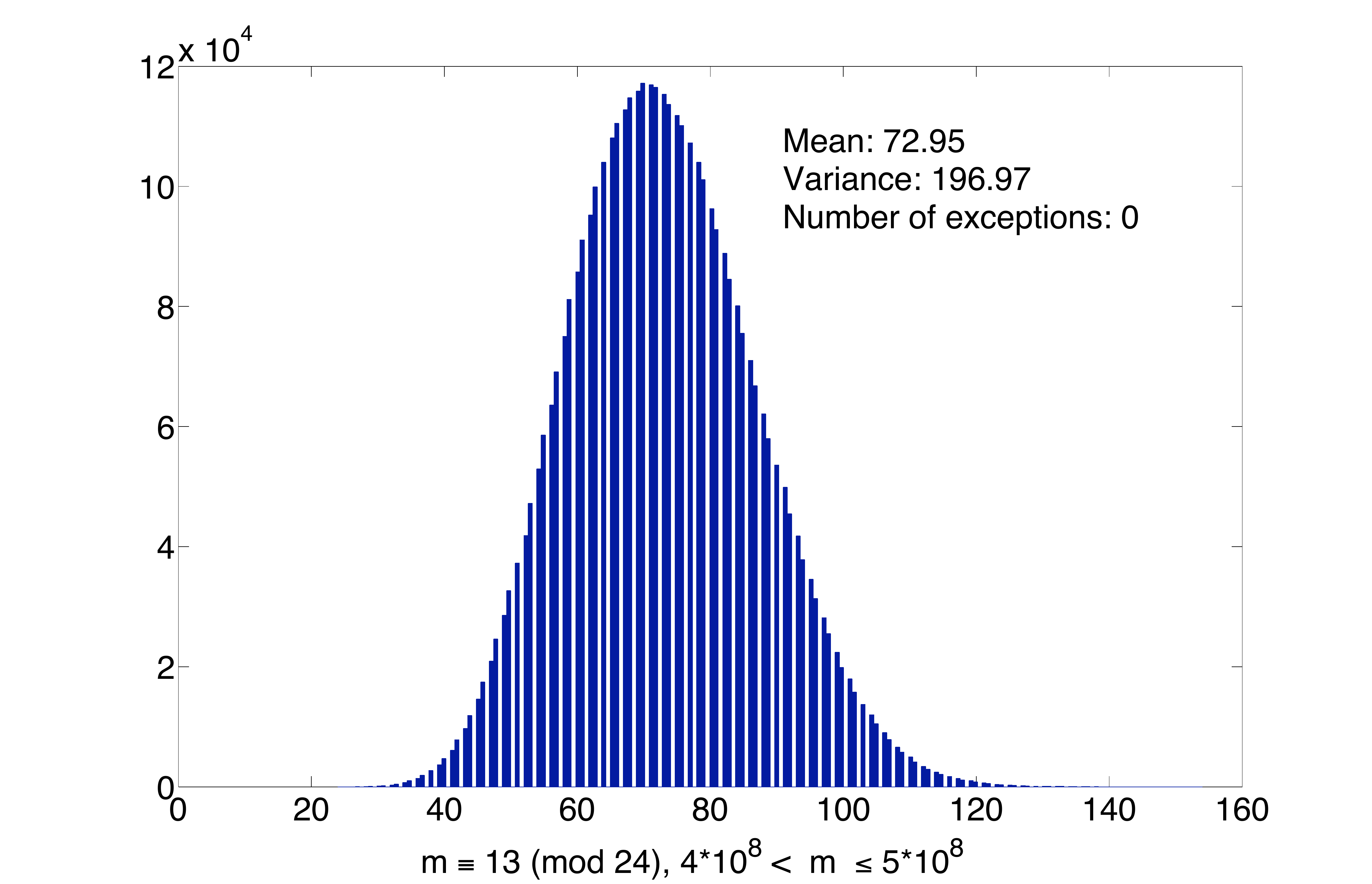}

\includegraphics[height=54mm]{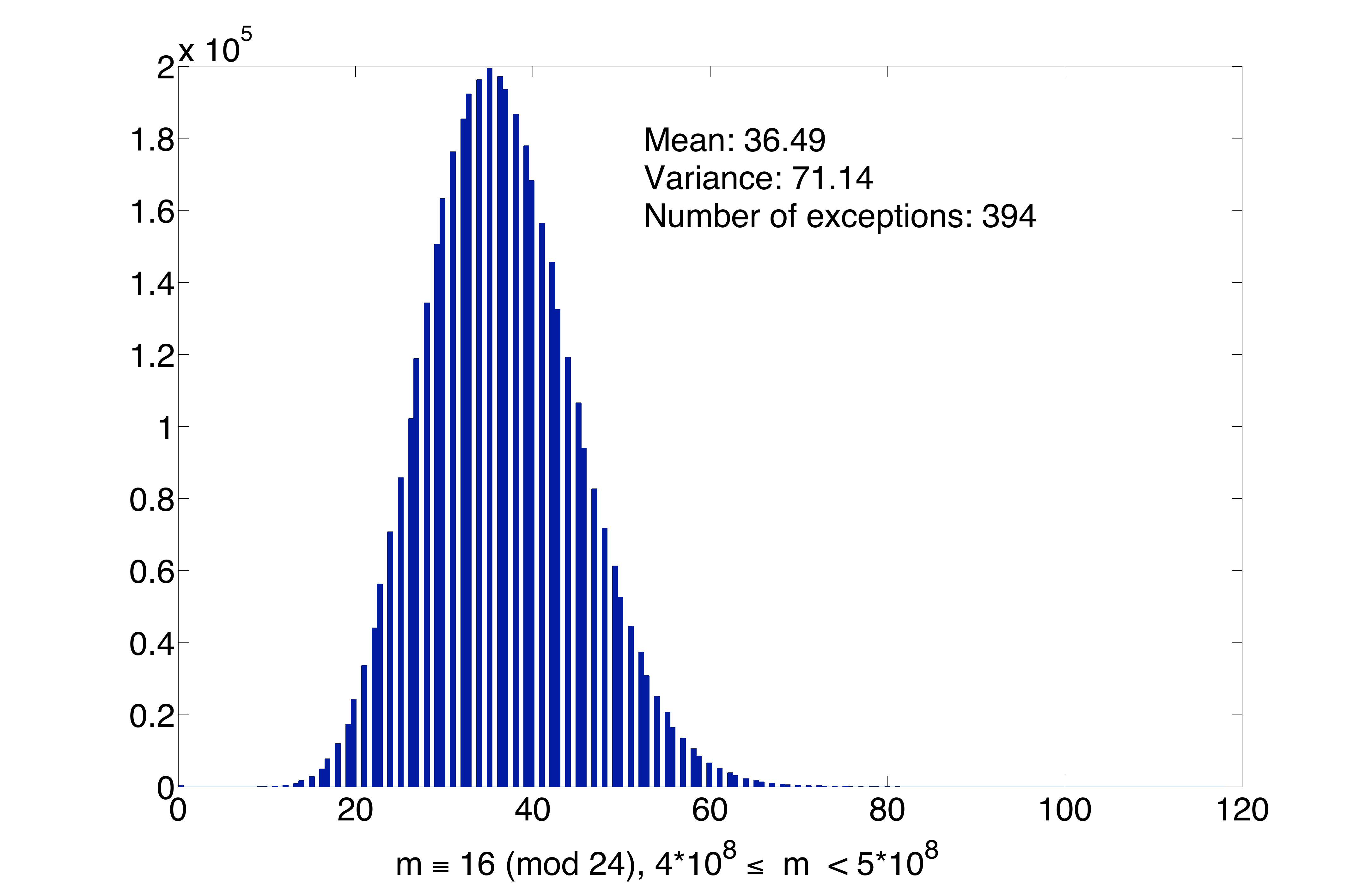}
\hfill
\includegraphics[height=54mm]{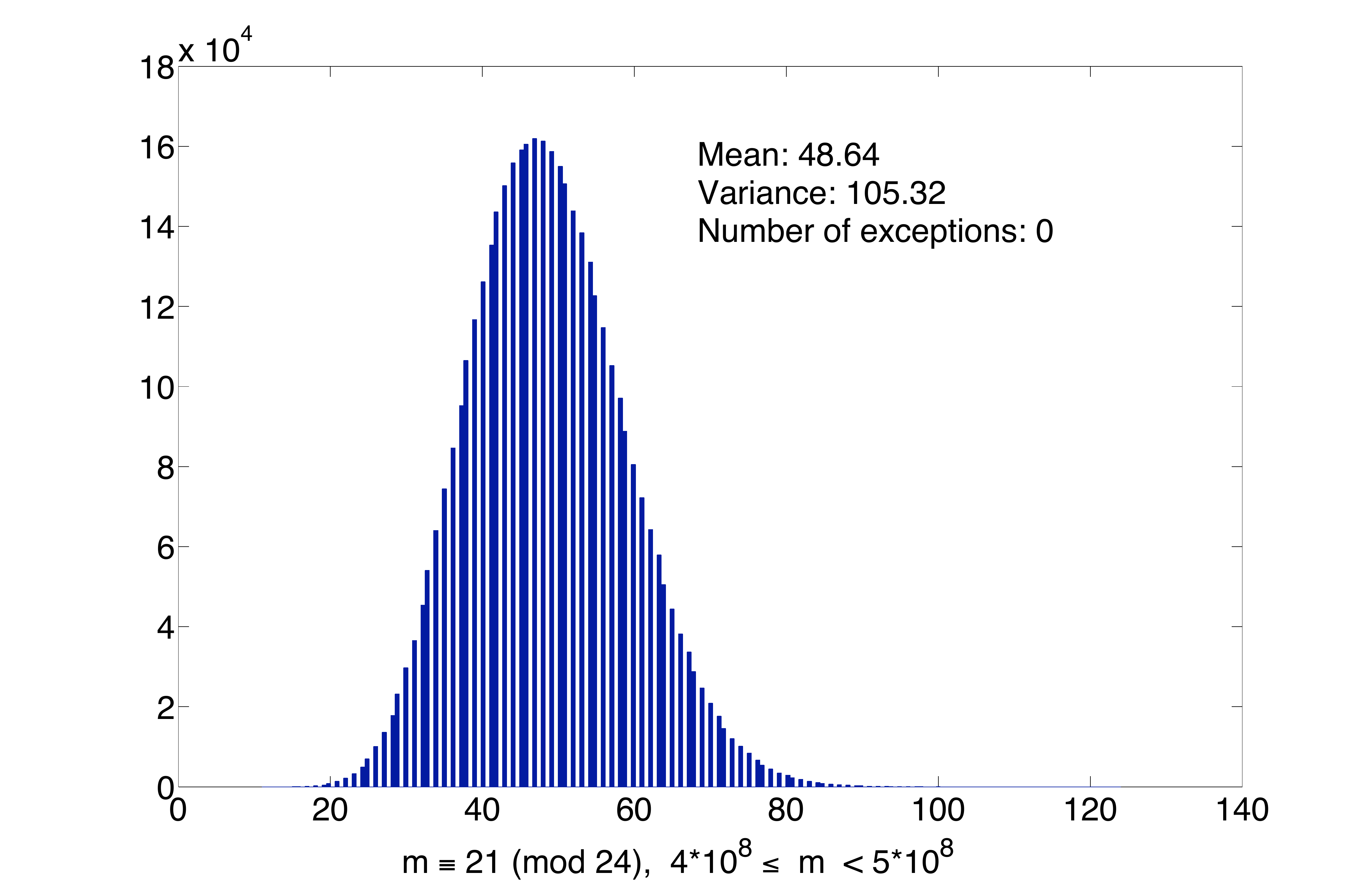}

As we mentioned before, the mean in each of these histograms is easily computable: let $C$ denote a circle in an Apollonian packing $P$ and let $a(C)$ denote the curvature of $C$.  Let $I = [k, k+K]$ be an interval of length $K$ and let $x\in I$ be an integer.  Let
$$\nu(x)=\#\{C\in P \, | \, a(C)=x\}$$
be the number of times $x$ is a curvature of a circle in $P$.  For and integer $m\geq 0$, let
$$\delta(m,n)=\#\{x\in I \, | x\equiv n \, (24),\, \nu(x) = m\}.$$
Then by Lemma~\ref{gammalemma},
\begin{equation}\label{equivalentsums}
\sum_{\stackrel{x\in I}{x\equiv n \, (24)}}\nu(x) = \sum_{m\geq 0}\delta(m,n)\cdot m.
\end{equation}
The equivalence of the two sums above is easy to observe -- one counts the same set of curvatures, but partitions them differently.  In particular, the expression in (\ref{equivalentsums}) allows us to determine the mean of the distributions in the histograms above.
Namely, denote by $x\in I$ an integer in some interval $I = [k, k+K]$ of length $K$.  Let $1\leq n\leq 24$, and let $\mu(n,P)$ denote the mean of the number of times $x\equiv n$ mod $24$ is represented as a curvature in the packing $P$.  Note that there are precisely $K/24$ integers congruent to $n$ mod $24$ in the interval $I$.  Combined with (\ref{equivalentsums}), this gives us
\begin{equation}\label{meaneq}
\mu(n,P)\approx\frac{24\cdot \gamma(n,P)\cdot(N_P(k+K)-N_P(k))}{K}.
\end{equation}
This formula predicts the following values for the means in $P_B$ in the range $[10^6, 10^8)$; and $P_C$ in the range $[4 \times 10^8, 5 \cdot 10^8)$:
\begin{eqnarray*}
&&\mu(2,P_B)=\mu(11,P_B)=\mu(14,P_B)=\mu(14,P_B)=\mu(23,P_B) = 406.70\dots\\
&&\mu(3,P_B)=\mu(6,P_B)=\mu(15,P_B)=\mu(18,P_B) = 271.13 \dots\\
&&\mu(0,P_C)=\mu(12,P_C)= 24.35\dots\\
&&\mu(4,P_C)=\mu(16,P_C)= 36.52\dots\\
&&\mu(13,P_C)=73.05\dots\\
&&\mu(21,P_C)=48.70\dots
\end{eqnarray*}
which coincides with the means observed in the histograms.  This clarifies why the mean is small for packings where the constant $c_P$ in the formula $N_P(x)\sim c_P\cdot x^{\delta}$ is small, and why one needs to consider very large integers to see that the local to global principle for such packings should hold.

This analysis can be carried out for any ACP, and will likely yield similar results.  In the direction of proving Conjecture~\ref{LG}, one might investigate how $X_P$ depends on the given packing -- can it perhaps be expressed in terms of the constant $c_P$ in (\ref{c})?  One might also ask how the variance of the distributions above depend on the packing, and how it changes with the size of the integers we consider -- answering this would give further insight into the local to global correspondence for curvatures in integer ACP's.

\section{A description of our algorithm and its running time}\label{algorithm}

We represent an ACP by a tree of quadruples.  Fig.~\ref{tree} shows the first two generations of the tree corresponding to $P_C$.  To generate all curvatures of magnitude less than $x$, we use a LIFO (last-in-first-out) stack to generate and prune this tree.  The algorithm is as follows:
\begin{itemize}
\item[\it{1)}] Push the root quadruple onto the stack.
\item[\it{2)}]Until the stack is empty, perform an iterative process:
\begin{itemize}
\item[\it{a)}] Pop a quadruple off of the stack and generate its children. 
\item[\it{b)}] For each child, if the new curvature created (i.e. the maximum entry of the quadruple) is less than $x$, then push the child onto the stack.
\end{itemize}
\end{itemize}
\vspace{.5 cm}

By pushing a quadruple onto the stack only if its maximum entry is less than $x$, we effectively prune the the tree.  Since we know each quadruple has a larger maximum entry than its parent, we use step 2\emph{b)} to avoid generating branches whose quadruples are known to have entries greater than $x$.

Although we use the concept of a tree to \emph{generate} curvatures, we note that the entire tree structure is not necessary to \emph{store} such curvatures.   Instead, we store the curvatures in a one-dimensional array of $x$ elements, all initialized to zero.  The $i$th element of the array contains the number of curvatures with magnitude $i$.  For instance, the $24$th element of the array for $P_C$ is equal to $1$, while the $25$th element is equal to $0$, since there are no curvatures equal to 25 in $P_C$.

We use these arrays to generate the histograms in Section~\ref{locglobal}.  Due to Matlab's memory constraints, we limit our Matlab arrays to $10^8$ entries.  So, to check for exceptions in the entire range of $[10^6, 5\cdot 10^8)$, we check each of the intervals $[10^6, 10^8)$, $[10^8, 2\cdot 10^8), \dots, [4\cdot 10^8, 5\cdot 10^8)$ individually.  We have chosen to display the interval $[10^6, 10^8)$ in our figures in Section ~\ref{locglobal}.

To count primes less than $x$, we simply increment a sum whenever a prime curvature is produced.  To count kissing primes less than $x$, we increment a sum whenever a prime curvature is produced \emph{and} some other member of the curvature's quadruple is prime.

It takes our algorithm $\textrm{O}(N_P(x))$ steps to compute $N_P(x)$, which is optimal since each node on the tree must be visited; that is, it is not possible to skip any quadruples.   

Our programs rely on Wayne and Sedgewicks's Stack data type and standard draw library \cite{sw}.

\begin{figure}[H]
\includegraphics[width=80mm]{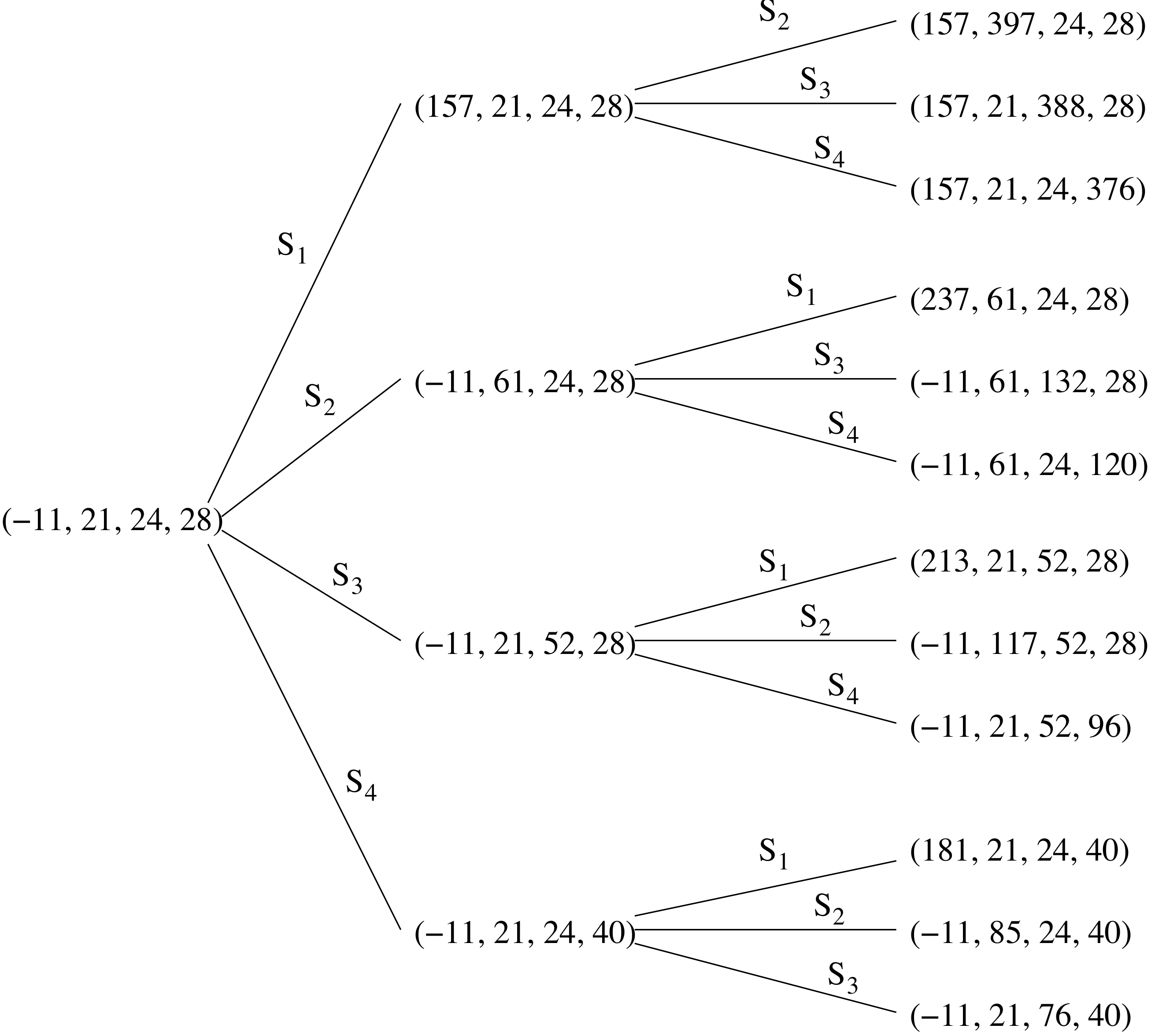}
\caption{The tree of quadruples for $P_C$, pictured up to 2 generations.}\label{tree}
\end{figure}


\end{document}